 \newtheorem{theorem}{Theorem}[section]
 \newtheorem{proposition}[theorem]{Proposition}
\theoremstyle{definition}
 \newtheorem{definition}[theorem]{Definition}
 \theoremstyle{remark} 
 \newtheorem{remark}[theorem]{Remark}
\newcommand{\p}{\partial}
\newcommand{\RR}{\mathbb{R}}
\newcommand{\TT}{\mathbb{T}}
\newcommand{\UU}{\mathcal{U}}
\newcommand{\VV}{\mathcal{V}}
\newcommand{\WW}{\mathcal{W}}
\newcommand{\TW}{\widetilde{\mathcal{W}}}
\newcommand{\lr}[1]{\left\langle #1 \right\rangle}
\numberwithin{equation}{section}
\begin{document}
\title[Uniqueness of 1D Generalized Bi-Schr\"odinger Flow]
{Uniqueness of 1D Generalized Bi-Schr\"odinger Flow
}
\author[E.~Onodera]{Eiji Onodera}
\address[Eiji Onodera]{Department of Mathematics, 
Faculty of Science and Technology, 
Kochi University, 
Kochi 780-8520, 
Japan}
\email{onodera@kochi-u.ac.jp}
\subjclass[2000]{53C44, 35G61, 53C21, 35Q35, 35Q40, 35Q55}
\keywords
{Generalized bi-Schr\"odinger flow;
Locally Hermitian symmetric space;
Dispersive partial differential equation; 
Uniqueness of a solution
}
%
\maketitle
\begin{abstract}
We establish the uniqueness of a smooth generalized 
bi-Schr\"odinger flow from the one-dimensional flat torus 
into a compact locally Hermitian symmetric space. 
The governing equation, 
which is satisfied by sections of the pull-back bundle 
induced from the flow, 
is a fourth-order nonlinear dispersive partial differential equation 
with loss of derivatives. 
To show the uniqueness, we adopt an extrinsic approach 
to compare two solutions via an isometric embedding into 
an ambient Euclidean space.  
We introduce an energy modifying the classical $H^2$-energy 
for the difference of two solutions, 
the detailed estimate of which enables us to eliminate 
the difficulty of the loss of derivatives. 
In particular, we demonstrate how to decide the form of 
the modification by exploiting the geometric structure 
of the locally Hermitian symmetric space.  
\end{abstract}
\section{Introduction}
\label{section:introduction}
The so-called generalized bi-Schr\"odinger flow equation 
for maps from a Riemannian manifold into K\"ahler 
manifold was recently introduced  
by Ding and Wang in \cite{DW2018}
in the following way:   
Let $(M, g)$ be an $m$-dimensional Riemannian manifold 
with a metric $g$
and let $(N,J,h)$ be a $2n$-dimensional K\"ahler manifold 
with the complex structure $J$ and a K\"ahler metric $h$.
Let $\alpha, \beta, \gamma\in \RR$ be constants where $\beta\ne 0$. 
The energy functional $E_{\alpha,\beta,\gamma}(u)$ 
for smooth maps
$u:(M, g)\to (N, J, h)$ is defined by  
$$
E_{\alpha,\beta,\gamma}(u)
:=
\alpha\,E(u)
+\beta\,E_2(u)
+
\gamma\,E_{\star}(u).  
$$
Here,
$E(u)
=
\frac{1}{2}
\int_M
|\nabla u|^2\,dv_g$
and 
$E_2(u)
=
\frac{1}{2}
\int_M
|\tau(u)|^2\,dv_g$ 
are energy functionals whose 
critical points are respectively known as 
harmonic maps and bi-harmonic maps, 
where  
$\nabla u$ is a section of the vector-bundle 
$T^{\ast}M\otimes u^{-1}TN$, 
$\tau(u)$ is the tension field of $u$, 
and $dv_g$ is the 
volume form of $(M,g)$. 
The energy functional $E_{\star}(u)$ is defined by 
\begin{align}
E_{\star}(u)
&:=
\int_M
h(R^N(\nabla u, J_u\nabla u)J_u\nabla u, \nabla u)\,dv_g, 
\nonumber
\end{align} 
where 
$R^N(\cdot,\cdot)$ is the Riemann curvature tensor on $(N,J,h)$. 
A time-dependent map 
$u=u(t,x):(-T,T)\times M\to N$  
is called a generalized bi-Schr\"odinger flow 
from $(M,g)$ to $(N,J,h)$ 
if $u$ satisfies the following Hamiltonian gradient flow equation 
\begin{equation}
u_t=J_u\nabla E_{\alpha,\beta,\gamma}(u)
\label{eq:bibi} 
\end{equation}
on $(-T,T)\times M$ for some $T>0$. 
We can see \eqref{eq:bibi} as a 
fourth-order extension of the well-known 
Schr\"odinger map flow equation -- a second-order 
geometric dispersive partial differential equation(PDE) -- formulated by 
\begin{equation}
u_t=J_u\tau (u), 
\label{eq:SM}
\end{equation}
which is just \eqref{eq:bibi} under the assumption 
$(\alpha,\beta,\gamma)=(-1,0.0)$.
For more details, see \cite{DW2018}. 
\par
Another unified formulation of \eqref{eq:bibi} 
was derived by the present author in \cite{onodera4}
for time-dependent maps from the real line $\RR$ 
or the one-dimensional flat torus $\TT:=\RR/ 2\pi \mathbb{Z}$ 
into a locally Hermitian symmetric space. 
Let $(N,J,h)$ be a locally Hermitian symmetric space,  
which is a complex manifold characterized by the condition 
$\nabla^N J=\nabla^N R^N=0$.  
Here $\nabla^N$ denotes the Levi-Civita condition on $(N,J,h)$. 
Throughout this paper, 
we adopt the definition of $R^N$ by 
$R^N(X,Y)Z:=\nabla^N_X\nabla^N_YZ-\nabla^N_Y\nabla^N_XZ
-\nabla^N_{[X,Y]}Z$ 
for any $X,Y,Z\in \Gamma(TN)$.  
Then, the result of \cite{onodera4} shows 
\eqref{eq:bibi} 
for time-dependent maps $u$ from $\RR$ or $\TT$ 
into $(N,J,h)$ can be written by 
\begin{align}
u_t&=
\beta\,J_u\nabla_x^3u_x
-\alpha\, J_u\nabla_xu_x
\nonumber
\\&\quad
+(\beta+8\gamma)\, R^N(\nabla_xu_x,u_x)J_uu_x
-12\gamma\, R^N(J_uu_x,u_x)\nabla_xu_x.
\label{eq:4101}
\end{align} 
Here, 
$u_t=du(\frac{\p}{\p t})$,  
$u_x=du(\frac{\p}{\p x})$, 
$du$ denotes the differential of $u$,
$\nabla_x$ denotes the covariant derivative along $u$ 
with respect to $x$, 
$J_u$ denotes the complex structure at $u=u(t,x)\in N$.
\par 
Restricting ourselves to the case $(N,J,h)$ 
is the canonical $2$-sphere 
$\mathbb{S}^2$ with $\alpha=-1$, 
we find \eqref{eq:4101} arises 
in mathematical physics.
Indeed, the $\mathbb{S}^2$-valued model 
\eqref{eq:4101} in this context is 
derived by a geometric reformulation of a
continuum limit
of the Heisenberg spin chain systems 
with nearest neighbor bilinear and
bi-quadratic exchange 
interactions(\cite{LPD}) .
One can consult with \cite[Section~2.2]{onodera4} for the 
reformulation of the physical model as \eqref{eq:4101}.
Moreover,
the $\mathbb{S}^2$-valued model \eqref{eq:4101} 
also occurs in relation 
with the so-called Fukumoto-Moffatt model 
equation(\cite{fukumoto, FM}) describing the 
motion of a vortex filament in an incompressible 
prefect fluid in $\RR^3$. 
In addition, by \cite{fukumoto, FM,LPD}, 
the $\mathbb{S}^2$-valued model is known to be completely 
integrable under the additional assumption $\beta=-8\gamma$, 
in that it has infinitely number of conservation laws.
\par 
In this paper, 
letting $(N,J,h)$ be a compact locally 
Hermitian symmetric space 
and restricting the spatial domain to $\TT$, 
we consider the following initial value problem 
\begin{alignat}{2}
 & u_t
  =
  a\,J_u\nabla_x^3u_x
  +
  \lambda\, J_u\nabla_xu_x
  +
  b\, R^N(\nabla_xu_x,u_x)J_uu_x
  +
  c\, R^N(J_uu_x,u_x)\nabla_xu_x,
\label{eq:pde}
\\
& u(0,x)
  =
  u_0(x),
\label{eq:data}
\end{alignat}
where a map
$u=u(t,x):\RR\times \TT\to N$  
is the solution being a flow of closed curve on $N$
and   
$u_0=u_0(x):\TT\to N$ is the given initial closed curve on $N$. 
Moreover, 
$a\ne 0$, $b$, $c$, $\lambda$ are supposed to be 
real constants so that \eqref{eq:pde} is handled as 
a fourth-order nonlinear dispersive PDE. 
Obviously, the equation \eqref{eq:pde} with
$(a,\lambda,b,c)=(\beta,-\alpha,\beta+8\gamma,-12\gamma)$
is nothing but \eqref{eq:4101}. 
In other words,  
\eqref{eq:pde} slightly extends 
\eqref{eq:4101} in the sense 
the relation $c=3(a-b)/2$
is not imposed among the constants $a,b,c$. 
\subsection{Main results and related known results}
The purpose of our research was to solve 
\eqref{eq:pde}-\eqref{eq:data}.
This is a fundamental problem in the theory of PDEs. 
Moreover, making the relationship between 
the solvablity and the geometric setting of $(N,J,h)$ clear 
is fascinating also from the viewpoint of geometric analysis. 
In this part, we state the known results in this direction 
and our main results in this paper. 
\par
The local and global well-posedness for 
the Schr\"odinger map flow equation \eqref{eq:SM}
with data in Sobolev spaces has been studied extensively 
by many authors. 
We do not attempt to survey all the results, but refer to 
\cite{CSU, chihara, DW1998, DW0, koiso, McGahagan, NSVZ, 
PWW, RRS, SSB, TU} for more details. 
We note that the local existence and uniqueness results 
can be obtained under the 
K\"ahler condition $\nabla^N J=0$ on $(N,J,h)$ by 
exploiting the classical geometric energy method based on the 
integration by parts and the Sobolev embedding.
If $\nabla^N J\ne 0$, then the so-called a loss of derivative occurs. 
In other words, the classical energy method breaks down 
due to the presence of a bad term coming from $\nabla^NJ\ne 0$.    
Fortunately however, the difficulty was overcame in \cite{chihara} 
by developing the energy method with a gauge transformation 
acting on sections of the pull-back bundle $u^{-1}TN$.
In addition, some third-order generalizations of  
\eqref{eq:SM} have also been investigated in
\cite{CO, onodera0, onodera1,onoderag, Song, 
SW2010, SW2011, SW2013}.
We do not attempt to state the detail, but 
stress that the classical geometric energy method still works 
to obtain local existence and uniqueness results
under the K\"ahler condition $\nabla^N J=0$. 
\par 
In contrast, for our equation \eqref{eq:pde}, 
we find the difficulty of loss of derivatives occurs 
even if $\nabla^N J=0$ holds.  
In general, 
the solvability of the initial value problem for 
a dispersive PDE essentially depends on the 
structure of the derivatives of the solution in the equation
(see, e.g, \cite{Akhunov, chihara2, Mizohata, Mizuhara, Tarama}). 
Thus the crucial part of our problem is to reveal the relationship 
between the structure and the setting of $(N,J,h)$. 
The procedure becomes harder as the spatial order of 
the equation becomes higher, and the fourth-order case is the
first one we encounter the difficulty of loss of derivatives 
under $\nabla^NJ=0$.
Furthermore, as the spatial domain $\TT$ is compact, 
the so-called dispersive smoothing effect inherited to the solution  
on $\RR$ -- which was useful to compensate 
the loss of derivatives completely as in \cite{CO2} -- 
is absent in our setting. 
Therefore, 
a stronger geometric structure of $(N,J,h)$
is required even to establish local existence results. 
\par 
Having the above in mind, we state 
the known results on \eqref{eq:pde} in this direction.  
Guo, Zeng, and Su in \cite{GZS} showed the local existence 
of weak solutions to \eqref{eq:pde}-\eqref{eq:data} 
when $N=\mathbb{S}^2$ with $\lambda=1$, $a\ne 0$,
$c=3(a-b)/2$, and $b=0$.  
Note that the equation \eqref{eq:pde} in the setting without $b=0$ 
corresponds to the 
$\mathbb{S}^2$-valued physical model of continuum limit of the 
Heisenberg spin chain systems(\cite{LPD}) as stated above. 
The additional assumption $b=0$ corresponds to the 
completely integrable condition
and the proof in \cite{LPD} is essentially based on a conservation law 
which is absent unless $b=0$ is imposed.
After that, 
the present author \cite{onodera2} showed the local 
existence and uniqueness of a 
smooth solution to \eqref{eq:pde}-\eqref{eq:data} when  
$N=\mathbb{S}^2$ with $\lambda=1$, $a\ne 0$ and   
$c=3(a-b)/2$ without any assumptions on the constants.  
In \cite{onodera3}, 
he furthermore extended the results to the case $N$ is a compact 
Riemann surface with constant Gaussian curvature,  
without any additional assumptions on the constants 
except for $a\ne0$.  
\begin{remark}
\label{remark:bor}
In more detail, the equation handled in \cite{onodera3} 
stated above is formulated by 
\begin{equation}
\label{eq:510add}
u_t=b_1J_u\nabla_x^3u_x
+b_2J_u\nabla_xu_x
+b_3h(u_x,u_x)J_u\nabla_xu_x
+b_4h(\nabla_xu_x,u_x)J_uu_x
\end{equation}
where $b_1(\ne 0),b_2,b_3,b_4\in \RR$ are constants.
This is different from \eqref{eq:pde}  
unless $(N,J,h)$ is a Riemann surface with constant Gaussian curvature. 
Moreover, as far as \eqref{eq:510add} is considered,   
it is unlikely that we can extend the local existence result to 
$(N,J,h)$ wider than the class 
of compact Riemann surfaces with constant curvature. 
This was first pointed out by Chihara in \cite{chihara2} from the 
theory of $L^2$-well-posedness for linear dispersive PDE systems, 
where the case of the Riemann surface as $(N,J,h)$ was discussed. 
The present author also investigated \eqref{eq:510add}
in case of higher-dimensional manifolds, and found that 
local existence result still holds as far as 
$(N,J,h)$ is a compact $2n$-dimensional K\"ahler manifold 
with constant non-vanishing sectional curvature 
(in the sense of a real manifold). 
However, it is meaningless, because the class of such 
manifolds is known to be the empty set without $n=1$ 
(see, e.g., \cite[Theorem~7.1.]{Hsiung}). 
Note again the strong obstruction is true only 
of \eqref{eq:510add}, 
and not of \eqref{eq:pde}. 
\end{remark}
Let us go back to our problem \eqref{eq:pde}-\eqref{eq:data}. 
Recently, the present author in \cite{onodera4} showed 
only the local existence of the solution to 
\eqref{eq:pde}-\eqref{eq:data}
for time-dependent maps into a 
compact locally Hermitian symmetric space.     
Indeed, by the mix of the parabolic regularization 
and the geometric energy method combined with 
a gauge transformation acting on $\Gamma(u^{-1}TN)$, 
he showed the following:
\begin{theorem}[\cite{onodera4},Theorem~1.3.]
\label{theorem:existence}
Suppose that $(N,J,h)$  is a compact 
locally Hermitian symmetric space. 
Let $k$ be an integer satisfying $k\geqslant 4$. 
Then for any 
$u_0\in C(\mathbb{T};N)$ satisfying 
$u_{0x}\in H^k(\TT;TN)$, 
there exists $T=T(\|u_{0x}\|_{H^4(\TT;TN)})>0$
such that 
\eqref{eq:pde}-\eqref{eq:data} 
has a solution 
$u\in C([-T,T]\times \TT;N)$
satisfying 
$u_x\in 
L^{\infty}(-T,T;H^{k}(\TT;TN))
\cap
C([-T,T];H^{k-1}(\TT;TN)).
$
\end{theorem}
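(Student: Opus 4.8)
\emph{Strategy.} The plan is to combine a parabolic regularization with a gauged geometric energy method, working throughout on the pull-back bundle $u^{-1}TN$ equipped with the covariant derivatives $\nabla_x,\nabla_t$, the parallel structures $\nabla^N J=\nabla^N R^N=0$, and the commutation rule $[\nabla_t,\nabla_x]=R^N(u_t,u_x)$. I regard $w:=u_x\in\Gamma(u^{-1}TN)$ as the principal unknown; differentiating \eqref{eq:pde} in $x$ and using $\nabla_x J_u=0$ shows that $w$ solves a fourth-order dispersive equation with principal part $a\,J_u\nabla_x^4 w$, whose symbol $a\,i\,\xi^4$ makes the energy framework natural. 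Since every term on the right-hand side of \eqref{eq:pde} is a section of $u^{-1}TN$, the flow is automatically $N$-valued, and I may embed $N\hookrightarrow\RR^d$ isometrically when invoking standard PDE theory for the regularized problems.

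First I would solve the regularized problem obtained by adding to the right-hand side of \eqref{eq:pde} a dissipative term $-\ep\,\nabla_x^3 u_x$, whose principal symbol $-\ep\,\xi^4$ renders the system fourth-order parabolic while keeping the right-hand side a tangent field. For each fixed $\ep>0$ this is a quasilinear parabolic equation for which local-in-time smooth solutions $u^\ep$ exist by a standard contraction (or Galerkin/semigroup) argument; the only danger is that the lifespan $T_\ep$ collapses as $\ep\to0$. Everything therefore hinges on energy bounds that are \emph{uniform} in $\ep$.

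The heart of the matter is the energy estimate at level $\ell$ for $V_\ell:=\nabla_x^\ell u_x$. Differentiating the equation gives $\nabla_t V_\ell=a\,J_u\nabla_x^4 V_\ell+\cdots$, and the principal term is harmless: integrating by parts twice and using the skew-symmetry of $J_u$ yields $a\int_\TT h(J_u\nabla_x^2 V_\ell,\nabla_x^2 V_\ell)\,dx=0$. The obstruction is that the curvature nonlinearities $b\,R^N(\nabla_x u_x,u_x)J_u u_x+c\,R^N(J_u u_x,u_x)\nabla_x u_x$, together with the commutator $[\nabla_t,\nabla_x^\ell]u_x$, contribute to the energy identity a term of the schematic form $\int_\TT h(\mathcal{B}\,\nabla_x V_\ell,\nabla_x V_\ell)\,dx$, where $\mathcal{B}$ is a bundle endomorphism built from $u_x$ and $R^N$ whose Hermitian-symmetric part does \emph{not} vanish. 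This is a pointwise quadratic form of size $\|V_{\ell+1}\|_{L^2}^2$, one derivative above the energy $\|V_\ell\|_{L^2}^2$, and it cannot be reduced by further integration by parts; hence the loss of one derivative, which persists even though $\nabla^N J=0$ holds. To defeat it I would introduce a gauge transformation: a bundle automorphism $\mathcal{G}_\ell=\mathcal{G}_\ell(t,x)$ of $u^{-1}TN$, set $\WW_\ell:=\mathcal{G}_\ell V_\ell$, and choose $\mathcal{G}_\ell$ (equivalently, a lower-order modification of the $H^\ell$-energy) so that the loss-carrying coefficient is conjugated away. Determining $\mathcal{G}_\ell$ reduces to solving a first-order ODE in $x$ whose inhomogeneity is the symmetric part of $\mathcal{B}$; on the compact torus $\TT$ this is solvable among \emph{periodic} automorphisms only under a compatibility condition, namely the vanishing of a certain mean over $\TT$ (trivial monodromy). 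This is exactly where the locally Hermitian symmetric structure is decisive: the parallelism $\nabla^N R^N=0$ forces the offending coefficient to be an exact $x$-derivative, so its integral over $\TT$ vanishes and the periodic gauge exists. I expect the construction of $\mathcal{G}_\ell$ and the verification of this compatibility to be the main obstacle. Once the gauge is in place one has $\|\WW_\ell\|_{L^2}\simeq\|V_\ell\|_{L^2}$, and a Gronwall argument closes the estimate in the tame form $\frac{d}{dt}\|u^\ep_x\|_{H^\ell}^2\le C(\|u^\ep_x\|_{H^4})\,\|u^\ep_x\|_{H^\ell}^2$; the base level $4$ enters because it is the smallest Sobolev level controlling the coefficients, via $H^4(\TT)\hookrightarrow C^3(\TT)$, which yields a lifespan $T=T(\|u_{0x}\|_{H^4})>0$ and bounds $\|u^\ep_x(t)\|_{H^k}\le C$ both independent of $\ep$.

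Finally I would pass to the limit $\ep\to0$. The uniform bound $u^\ep_x\in L^\infty(0,T;H^k)$ together with a uniform bound on $\nabla_t u^\ep_x$ in a lower norm gives, via the Aubin--Lions lemma, strong convergence $u^\ep\to u$ in $C([0,T];H^{k-1})$; the limit solves \eqref{eq:pde}, remains $N$-valued, and inherits $u_x\in L^\infty(0,T;H^k)\cap C([0,T];H^{k-1})$ with $u\in C([0,T]\times\TT;N)$. Since $t\mapsto-t$ carries \eqref{eq:pde} into an equation of the same form, with $(a,\lambda,b,c)$ replaced by $(-a,-\lambda,-b,-c)$ and leading coefficient still nonzero, solving the time-reversed problem extends the solution to $[-T,T]$, completing the argument.
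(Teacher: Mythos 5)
The paper does not actually prove Theorem~\ref{theorem:existence}: it is quoted from \cite{onodera4}, and only the strategy is sketched in Section~1. Measured against that sketch, your overall architecture --- parabolic regularization by $-\ep\nabla_x^3u_x$, $\ep$-uniform gauged energy estimates on $\Gamma(u^{-1}TN)$ closing at the base level $H^4$, compactness to pass to the limit, and time reversal --- is the same one the author describes.

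There is, however, a concrete gap in the step you yourself flag as the main obstacle. By \eqref{eq:esspde}, the equation for $V_k=\nabla_x^ku_x$ carries \emph{two} loss-of-derivative terms, $d_1P_1\nabla_x^2V_k$ and $d_2P_2\nabla_xV_k$ with $P_1Y=R^N(Y,J_uu_x)u_x$ and $P_2Y=R^N(J_u\nabla_xu_x,u_x)Y$, and they are neutralized by two different structural facts: (A) the skew-symmetry of $(J_uP_1+P_1J_u)J_u$, a pointwise K\"ahler curvature identity, and (B) the potential structure $P_2=\nabla_x\widetilde P$ with $\widetilde P=\tfrac12R^N(J_uu_x,u_x)$, which is where $\nabla^NR^N=0$ is really used. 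Your proposal collapses both losses into a single quadratic form $\int_\TT h(\mathcal{B}\nabla_xV_\ell,\nabla_xV_\ell)\,dx$ and offers a single cure: exactness of the coefficient, hence zero mean over $\TT$, hence a periodic gauge. That cure is the right one for the $P_2$-type term, but it does not touch the $P_1$-type term: the symmetric part of $P_1$ is a zeroth-order curvature expression in $u_x$ alone (on a surface of curvature $S$ it is $-S\bigl(h(\cdot,u_x)J_uu_x+h(\cdot,J_uu_x)u_x\bigr)$, manifestly nonzero), it is not an exact $x$-derivative, and no mean-zero or monodromy argument applies to it; what saves it is the algebraic identity (A). Separately, the device you propose --- a pointwise bundle automorphism $\mathcal{G}_\ell$ obtained by integrating a first-order ODE in $x$ --- is the gauge for the \emph{second}-order Schr\"odinger map flow. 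Conjugating the fourth-order principal part $aJ_u\nabla_x^4$ by a non-parallel $\mathcal{G}_\ell$ creates the new commutator term $-4aJ_u(\nabla_x\mathcal{G}_\ell)\nabla_x^3V_\ell$, a third-order error that must itself be controlled; this is why \cite{onodera4} (and the uniqueness argument of Section~\ref{section:proof}, see Remark~\ref{remark:comm}) instead uses an additive correction $V_\ell\mapsto V_\ell+\Psi V_\ell$ with $\Psi$ an operator of negative order, i.e.\ a modified energy. Your parenthetical ``equivalently, a lower-order modification of the $H^\ell$-energy'' names the right object, but as written the construction of the gauge is not the one that closes the estimate, and it addresses only half of the loss.
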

Here and hereafter, $\Gamma(u^{-1}TN)$ denotes the set of all 
sections of the pull-back bundle $u^{-1}TN$, 
and $H^k(\TT;TN)$ 
is defined to be the set of all  sections
$V\in \Gamma(u^{-1}TN)$ 
such that  
$$
\|V\|_{H^k(\TT;TN)}
:=
\sum_{\ell=0}^k
\int_{\TT}
h(\nabla_x^{\ell}V(x), \nabla_x^{\ell}V(x))\,dx
<\infty.
$$
\par
It is natural to investigate whether the uniqueness of the solution 
to \eqref{eq:pde}-\eqref{eq:data} holds or not.  
The question is rather challenging, 
since how to apply the condition $\nabla^NR^N=0$
is unclear, unlikely to the proof of existence.  
Indeed, there are no other uniqueness results except  for
the very limited case of $(N,J,h)$  
as stated above.
The purpose of this paper is to establish how to apply 
$\nabla^NR^N=0$ and to prove the uniqueness.  
Our main result is now stated as follows:  
\begin{theorem}
\label{theorem:uniqueness}
Suppose that $(N,J,h)$ is a compact locally Hermitian symmetric 
space.
Let $k$ be an integer satisfying $k\geqslant 5$. 
Let $u$ and $v$ be solutions to \eqref{eq:pde}-\eqref{eq:data} 
in Theorem~\ref{theorem:existence}.
Then it follows that $u=v$ on $[-T,T]\times \TT$.
\end{theorem}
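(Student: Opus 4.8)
The plan is to adopt the extrinsic approach announced in the abstract: fix an isometric embedding $\iota:(N,J,h)\hookrightarrow\RR^d$ into a Euclidean space and regard the two solutions $u,v$ as $\RR^d$-valued maps. Writing $w:=u-v$, the difference of the two equations \eqref{eq:pde} yields a fourth-order equation for $w$ whose principal part is $a\,J\,w_{xxxx}$ plus lower-order terms involving $w,w_x,w_{xx},w_{xxx}$ with coefficients depending smoothly on $u,v$ and their spatial derivatives up to some finite order. Because both solutions share the same initial data $u_0$, we have $w(0,x)\equiv 0$, so the uniqueness will follow from a Gronwall-type energy inequality
\begin{equation}
\frac{d}{dt}\mathcal{E}(t)\leqslant C\,\mathcal{E}(t),
\label{eq:gronwall-plan}
\end{equation}
where $\mathcal{E}(t)$ is an energy comparable to $\|w\|_{H^2}^2$ and $C$ depends only on the $H^k$-norms of $u,v$, which are controlled on $[-T,T]$ by Theorem~\ref{theorem:existence}.

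The central difficulty is the loss of derivatives: because $\nabla^N J\ne0$ (and curvature terms appear), differentiating the naive $H^2$-energy $\frac12\int_\TT(|w|^2+|w_x|^2+|w_{xx}|^2)\,dx$ produces a term in which $w_{xxx}$ is paired against $w_{xx}$ through a first-order coefficient that does \emph{not} integrate by parts into a harmless expression. This is exactly the obstruction that the classical geometric energy method cannot absorb. The remedy, following the strategy of \cite{chihara} adapted to the fourth-order setting, is to modify the $H^2$-energy by adding correction terms designed to cancel the bad contribution. Concretely, I would seek a modified energy of the form
\begin{equation}
\mathcal{E}(t)=\frac12\int_\TT\bigl(|w|^2+|w_x|^2+|w_{xx}|^2\bigr)\,dx
+\int_\TT \lr{\Phi(u,v)w_{xx},w_x}\,dx+\cdots,
\label{eq:modenergy-plan}
\end{equation}
where $\Phi$ is a matrix-valued function of $u,v$ and lower derivatives, to be chosen so that its time derivative, after integration by parts, exactly removes the derivative-losing term. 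The abstract promises that the correct form of $\Phi$ is dictated by the geometric structure of the locally Hermitian symmetric space; the role of $\nabla^N R^N=0$ is presumably to guarantee that the curvature contributions arising when one differentiates the curvature terms in \eqref{eq:pde} reorganize into expressions that either pair symmetrically (and hence vanish or become boundary-free) or fall to lower order. Deciding $\Phi$ is the main obstacle, and it is where the hypotheses $\nabla^N J=0$, $\nabla^N R^N=0$ must be used in an essential way.

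The key steps, in order, are: (i) set up the embedding and derive the second fundamental form identities so that $\nabla_x$ along $u$ translates into Euclidean differentiation plus second-fundamental-form corrections, and record how $J_u$, $R^N$ act; (ii) subtract the two equations and expand each term of \eqref{eq:pde} for $u$ minus the same term for $v$, carefully isolating the principal symbol $a\,J\,w_{xxxx}$ and identifying every first-order-in-$w_{xxx}$ coefficient; (iii) compute $\tfrac{d}{dt}$ of the plain $H^2$-energy and locate precisely the loss-of-derivatives term (the $w_{xxx}$-against-$w_{xx}$ pairing surviving integration by parts); (iv) determine $\Phi$ algebraically by demanding that $\tfrac{d}{dt}\int\lr{\Phi w_{xx},w_x}$ cancels that term, invoking $\nabla^N R^N=0$ to ensure the required cancellation is consistent and produces only lower-order remainders; and (v) bound all remaining terms by $C\,\mathcal{E}(t)$ using the Sobolev embedding $H^1(\TT)\hookrightarrow L^\infty(\TT)$ and the uniform $H^k$-bounds on $u,v$, establishing the coercivity $\mathcal{E}(t)\sim\|w\|_{H^2}^2$ for $t$ small so that \eqref{eq:gronwall-plan} applies. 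Finally, since $\mathcal{E}(0)=0$, Gronwall gives $\mathcal{E}\equiv0$ and hence $w\equiv0$, i.e.\ $u=v$ on $[-T,T]\times\TT$. I expect step (iv) to be the crux: verifying that the geometric identities of the locally Hermitian symmetric space force the modification to close is where the real content of the theorem lies.
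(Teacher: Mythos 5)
Your outline reproduces the strategy that the paper's introduction already announces (isometric embedding, modified $H^2$-type energy, Gronwall), but it stops exactly where the actual proof begins, and one of your premises is wrong. The loss of derivatives here does \emph{not} come from $\nabla^N J\ne 0$: a locally Hermitian symmetric space is K\"ahler, so $\nabla^N J=0$ throughout, and the paper stresses that the obstruction persists \emph{despite} this — it comes from the curvature nonlinearities of the fourth-order equation itself. After a long computation (Proposition~\ref{proposition:UU} and the estimate \eqref{eq:momo}) the loss is isolated in exactly two terms, $R(\p_x^2\WW,J(U)U_x)U_x$ and $R(J(U)\UU,U_x)\p_x\WW$, where $\WW=dw_u(\nabla_xu_x)-dw_v(\nabla_xv_x)$ is the difference of the pushed-forward \emph{covariant} second derivatives rather than the plain Euclidean $\p_x^2Z$. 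Reaching that normal form is itself half the work: it uses the operators $R$, $B_i(U)$, $S_{\pm}(U)$, $T(U)$ of Section~\ref{section:GP} and in particular the identity \eqref{eq:R7}, which is precisely where $\nabla^NR^N=0$ enters. None of this appears in your plan.

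The second, decisive gap is that your correction term is both unexecuted and structurally different from the one that works. You propose adding a cross term $\int\lr{\Phi(u,v)w_{xx},w_x}\,dx$ with a zeroth-order coefficient $\Phi$ and leave $\Phi$ undetermined, conceding that your step (iv) is ``the main obstacle'' — but that step is the theorem. The paper instead gauge-transforms the highest-order unknown itself, setting $\TW=\WW+\Lambda$ with the explicit choice
$\Lambda=-\frac{e_1}{2a}R(Z,U_x)U_x+\frac{e_2}{8a}R(J(U)U_x,U_x)J(U)Z$,
an order $-2$ correction relative to $\WW$, so that the induced cross term pairs $\WW$ against $Z$ (not against $Z_x$); the commutators of the corresponding operators $\Psi_1,\Psi_2$ with the principal part $a\p_x^2\{J(U)\p_x^2\cdot\}$ then reproduce the two bad terms with coefficients depending on $e_1,e_2$, which are fixed as $e_1=a-b$, $e_2=-\frac{3a}{2}+\frac{3b}{2}-3c$ to achieve cancellation. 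Without exhibiting a concrete $\Phi$ and verifying that its time derivative both cancels the bad terms and generates no new derivative-losing terms (your pairing against $w_x$ sits one order higher than the paper's, so it would first have to cancel internally at top order before it could help at the relevant order), the proposal does not amount to a proof of the theorem.
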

\begin{remark}
Let $u$ be a solution to \eqref{eq:pde}-\eqref{eq:data} 
with regularity in Theorem~\ref{theorem:existence}. 
Once the uniqueness is established, then we can recover 
the time-continuity of the highest derivative 
$\nabla_x^ku_x$ in $L^2$, 
by applying the weak time-continuity in $L^2$ 
and the energy estimate 
for $\|u_x(t)\|_{H^k(\TT;TN)}^2$ established in \cite{onodera4}.
This shows $u_x\in C([-T,T];H^{k}(\TT;TN))$.  
The argument is now standard, and hence we omit the detail.  
\end{remark} 
\begin{remark}
Our proof of Theorem~\ref{theorem:uniqueness} 
works without modifications even if we replace $\TT$ with $\RR$. 
Nonetheless, no originality is claimed here, 
because the method previously established 
in \cite{CO2} based on the dispersive smoothing effect 
works to prove in the case of $\RR$.
\end{remark}
\begin{remark}
The class of compact locally Hermitian symmetric spaces as $(N,J,h)$ 
includes all compact K\"ahler manifolds of 
constant holomorphic sectional curvature 
and compact Hermitian symmetric spaces, 
as well as compact Riemann surfaces with constant curvature.
We should mention that   
the authors in \cite{DW2018} obtained the explicit formulation 
of \eqref{eq:bibi} for time-dependent maps from $\RR$ or $\TT$ into 
three types of Hermitian symmetric spaces as $(N,J,h)$ 
by using the Lie bracket in the symmetric Lie 
algebra of $N$. 
In particular, they obtained the explicit formulation 
when $N$ is a compact K\"ahler Grassmannian manifold
$G_{n,k}$ for $n,k$ with 
$1\leqslant k\leqslant n-1$. 
Though the formulation of the equation is seemingly different from \eqref{eq:pde}, 
the uniqueness result for the equation also falls within the scope of Theorem~\ref{theorem:uniqueness}.
\end{remark}
\begin{remark}
In \cite{DW2018}, 
the equivalence of 
\eqref{eq:bibi}  
for time-dependent maps from 
$\RR$ into $N=G_{n,k}$ 
and a fourth-order nonlinear dispersive PDE for 
$k\times (n-k)$-complex-matrix-valued functions is also discussed. 
Particularly in the case $N=G_{2,1}$, 
corresponding equation for matrix-valued functions 
is just a (single) fourth-order semilinear dispersive PDE 
for complex-valued functions, 
and the well-posedness of the initial value problem in a Sobolev space 
was established by Segata in \cite{segata}, 
including the case the spatial domain is $\TT$. 
In the higher-dimensional case of $N$ 
except for $N=G_{2,1}$, 
we can see the corresponding equation for matrix-valued functions 
as a system of nonlinear dispersive PDEs 
including a nonlocal nonlinearity, 
which is more attractive in the interface of 
geometry and analysis of nonlinear dispersive PDEs. 
To the best of the author's knowledge, however, 
there are no results on the solvability of their initial value problem. 
If the equivalence holds also on the spatial domain $\TT$, 
then Theorems~\ref{theorem:existence} and \ref{theorem:uniqueness}
automatically give the local existence and uniqueness results 
on the dispersive PDEs for matrix-valued functions.   
Although it seems to be a nontrivial matter to show the equivalence,  
we are strongly convinced that the insights obtained 
in this paper give rise to valuable information about 
how to handle the dispersive PDEs for matrix-valued functions. 
\end{remark}
\subsection{Key of the proof}
In this part, we state the key idea of the proof of Theorem~\ref{theorem:uniqueness} after reviewing the proof 
of Theorem~\ref{theorem:existence} briefly. 
\par 
Theorem~\ref{theorem:existence} on the local existence 
of a solution was proved by an intrinsic approach in \cite{onodera4}. 
To state the key observation,  
suppose $u$ is a smooth solution 
to \eqref{eq:pde}-\eqref{eq:data}. 
Then 
the equation satisfied by 
$\nabla_x^ku_x$ with $k\geqslant 4$ 
turns out to be described by  
\begin{align}
(\nabla_t-a\,J_u\nabla_x^4-d_1\,P_1\nabla_x^2-d_2\,P_2\nabla_x)
\nabla_x^ku_x
&=
O\left(
\sum_{m=0}^{k+2}
|\nabla_x^mu_x|_h
\right), 
\label{eq:esspde}
\end{align}
where $|\cdot|_h=\left\{h(\cdot,\cdot)\right\}^{1/2}$, 
and $d_1$ and $d_2$ are real constants depending on 
$a,b,c,k$, 
and
\begin{align}
P_1Y
&:=
R^N(Y,J_uu_x)u_x 
\quad
\text{and}
\quad
P_2Y
:=
R^N(J_u\nabla_xu_x,u_x)Y
\nonumber
\end{align}
respectively 
for any $Y\in \Gamma(u^{-1}TN)$. 
From \eqref{eq:esspde},  
we find the classical energy estimate 
for $\|u_x\|_{H^k(\TT;TN)}^2$ 
breaks down because 
$d_1\,P_1\nabla_x^2$ and $d_2\,P_2\nabla_x$
cause loss of derivatives.  
Fortunately however, the difficulty 
was overcame in \cite{onodera4} by 
the geometric energy method combined with 
a gauge transformation acting on $\Gamma(u^{-1}TN)$. 
Observing the method in \cite{onodera4} in more detail, 
we find that the reason we can construct the gauge transformation  
comes from the following good properties:
\begin{itemize}
\item[(A)] $(J_uP_1+P_1J_u)J_u$ is skew-symmetric on 
$\Gamma(u^{-1}TN)$. 
\item[(B)]  
$P_2$ has a potential in the sense
$P_2=(\nabla_x \widetilde{P})$ where 
$\widetilde{P}=\frac{1}{2}R^N(J_uu_x,u_x)$.
\end{itemize}
We call them a
``good structure'' 
of \eqref{eq:pde} in this paper.
We note also that
the right hand side of \eqref{eq:esspde} 
also includes $\nabla_x^2(\nabla_x^ku_x)$ 
and $\nabla_x(\nabla_x^ku_x)$, 
but loss of derivatives do not occur from the part 
thanks to the assumption $\nabla ^NR^N=\nabla^N J=0$.  
It is unlikely that we can relax the assumption $\nabla^N R^N=0$, 
since the assumption  
seems to correspond to the constant curvature condition on the 
equation in \cite{onodera3}. 
Indeed, if we let $\nabla^N R^N\ne 0$, 
then  \eqref{eq:esspde} involves a skew-symmetric 
first-order derivative of the form 
$(\nabla_x R^N)(J_uu_x,u_x)\nabla_x(\nabla_x^ku_x)$, 
which is the worst term we cannot handle in the energy estimate. 
\par
Now we turn our attention to the proof of 
Theorem~\ref{theorem:uniqueness}. 
To state the key of the proof simply, suppose 
$u$ and $v$ are sufficiently smooth solutions to 
\eqref{eq:pde}-\eqref{eq:data} with same initial data. 
It suffices to show $u=v$. 
Since their difference is not defined on $N$ directly,   
we fix an isometric embedding $w$ from $(N,J,h)$ 
into some ambient Euclidean  space $\RR^d$ with sufficiently 
large integer $d$, and define  
$$
Z:=U-V,\quad
U:=w{\circ}u,\quad 
V:=w{\circ} v,
$$
as vector-valued functions with values in $\RR^d$. 
However, then, the extrinsic formulation for the equation 
satisfied by $U$
becomes highly nonlinear (quasilinear) fourth-order dispersive equation   
involving $\p_x^3U$ as well as $\p_x^2U,\p_xU$ in the nonlinearity   
and the good structure such as \eqref{eq:esspde} 
is lost from the equation satisfied by 
$Z$ and the derivatives in $x$.
Nonetheless, we expect that the good structure still remain 
at least in the tangential component.  
Having them in mind, 
we decompose the equation satisfied by the second derivative of $Z$ in $x$ 
into the tangential component in $dw(\Gamma(u^{-1}TN))$  
and normal component in $(dw(\Gamma(u^{-1}TN))^{\perp}$, 
and exploit the good structure hidden  
in $dw(\Gamma(u^{-1}TN))$
to derive the estimate.  
The normal component is estimated by making use of 
some properties of the second fundamental form on $N$. 
More precisely, setting 
$$
\UU:=dw_u(\nabla_xu_x),\quad  
\VV:=dw_v(\nabla_xv_x), \quad 
\WW:=\UU-\VV, 
$$
we compute the equation satisfied  by $Z$, $Z_x$ and $\WW$ 
(not by $\p_x^2Z$ in order to make 
the computation a little simpler). 
Particularly, 
the results of the computation for $\WW$ is as follows: 
$$
\WW_t=a\p_x^2\{J(U)\p_x^2\WW\}
+\mathcal{N}(\p_x\WW,\p_x^2\WW,\p_x^3\WW)
+\mathcal{O}(|Z|+|Z_x|+|\WW|), 
$$ 
where for each $(t,x)$,
$J(U(t,x)):\RR^d\to \RR^d$ is a map behaving as 
an almost complex structure on $dw_{u(t,x)}(T_{u(t,x)}N)$,  
and the nonlinearity $\mathcal{N}$ is an 
$\RR^d$-valued function involving 
$\p_x^3\WW$, $\p_x^2\WW$, $\p_x\WW$.  
The explicit expression will be given by \eqref{eq:WW}  
(see also Proposition~\ref{proposition:UU}), 
and all the geometric notion used to describe \eqref{eq:WW} 
will be defined in Section~\ref{section:GP}. 
Looking at \eqref{eq:WW}, \eqref{eq:UU2}, 
and the classical energy estimate 
\eqref{eq:momo} coming from them, 
we notice that the loss of derivatives 
in the classical energy estimate for $\WW$ 
comes only from the linear combination of 
$$
R(\p_x^2\WW,J(U)U_x)U_x 
\quad \text{and} \quad
R(J(U)\UU,U_x)\p_x\WW,
$$ 
where the definition of $R$ will be given 
in \eqref{eq:newdef} in Section~\ref{section:GP}.
Then, motivated by the method in \cite{chihara2}, 
we can choose a suitable 
gauge transformed function 
$\widetilde{W}$, which is formally written by 
$$
\widetilde{W}=\WW+(\Psi_1+\Psi_2)Z_{xx}, 
$$
where $\Psi_1$ and $\Psi_2$ will be defined  
in Remark~\ref{remark:comm} in Section~\ref{section:proof}, 
and they behave as pseudo-differential operators of order $-2$. 
We can handle as $\TW=\WW+\mathcal{O}(|Z|)$ in many situations.  
Moreover, we stress that the commutator of $\Psi_1$ (resp. $\Psi_2$) 
and the principal part $a\p_x^2\{J(U)\p_x^2\}$ effectively works 
to eliminate the part
$R(\p_x^2\WW,J(U)U_x)U_x$ (resp. $R(J(U)\UU,U_x)\p_x\WW$). 
By using them, we can get the desired energy estimate for 
$\widetilde{D}(t)^2
:=\|Z(t)\|_{L^2}^2+\|Z_x(t)\|_{L^2}^2+\|\TW(t)\|_{L^2}^2$ 
so that we obtain $Z=0$, which implies $u=v$. 
Here, $\|\cdot\|_{L^2}$ denotes the standard $L^2$-norm 
for $\RR^d$-valued functions on $\TT$ 
(see Section~\ref{subsection:energy}). 
The assumption $k\geqslant 5$ 
comes from the requirement for the 
energy estimate to make sense,   
which slightly improves the previous one $k\geqslant 6$ 
imposed in \cite{onodera2, onodera3}.
\begin{remark}
The extrinsic approach to uniqueness results 
via the isometric embedding 
into $\RR^d$ has been adopted in broad range of 
geometric PDEs for maps into manifolds:  
Harmonic (or Biharmonic) map heat flow equation, 
Wave (or Biwave) map equation, 
Schr\"odinger map flow equation and 
the third- or fourth-order analogous dispersive curve flow equations, 
and so on. See, e.g., 
\cite{CO, CO2, DW1998, ES, HLSS, Lamm, onodera4, 
ShSt, SW2013} 
and references therein. 
In particular, we mention again that 
the result in \cite{onodera4} shows Theorem~\ref{theorem:uniqueness} 
if we assume that $(N,J,h)$ is a compact Riemann surface 
with constant Gaussian curvature $S$. 
However, the argument of the proof breaks down without the assumption,  
since it is essentially based on the property 
$$
R^N(Y_1,Y_2)Y_3=S\left(
h(Y_2,Y_3)Y_1-h(Y_2,Y_3)Y_1
\right)
\quad 
(Y_1,Y_2,Y_3\in \Gamma(u^{-1}TN)).
$$ 
\end{remark}
\begin{remark}
The choice of $\TW$ is actually crucial in our proof 
as it is the part we exploit the good structure of \eqref{eq:pde}. 
However, before doing this, we require obtaining 
Proposition~\ref{proposition:UU}, \eqref{eq:WW}, and  
the estimate \eqref{eq:momo}, which is more delicate issue, 
since we need to drop all the seemingly bad terms other than 
$R(\p_x^2\WW,J(U)U_x)U_x$ 
and $R(J(U)\UU,U_x)\p_x\WW$ completely. 
To do this, we will develop the basic tools of computations
on $\Gamma(u^{-1}TN)$ to those
on the embedded submanifold in $\RR^d$. 
More concretely, 
we will introduce the notion $R$ in Definition~\ref{definition:newdef} 
and obtain some properties corresponding to those for $R^N$
(see  Proposition~\ref{proposition:R}). 
We point out that we can apply  the assumption 
$\nabla^N R^N=0$ as \eqref{eq:R7}. 
Moreover, we will obtain the connection 
between $R$ and the second fundamental form on $N$ 
in Proposition~\ref{proposition:embcurvature}, 
which will be useful to compare the difference 
$dw_u(a\nabla_x^3u_x)-dw_v(a\nabla_x^3v_x)$ 
in Section~\ref{section:cl}. 
Some other useful geometric properties 
applied in this paper also will be collected in Section~\ref{section:GP}.
\end{remark}
\par
The organization of the paper is as follows: 
In Section~\ref{section:GP}, we collect geometric notion and 
tools of computation used in this paper.
In Section~\ref{section:cl} divided by two subsections,  
we obtain the classical energy estimate for $\WW$ by computing 
the equation satisfied by $\WW$. 
In Section~\ref{section:proof}, based on the results in 
Section~\ref{section:cl}, 
we complete the proof of Theorem~\ref{theorem:uniqueness}.  
\section{Geometric Preliminaries}
\label{section:GP}
In this section, we collect some basic material 
of Riemannian geometry for maps into 
a locally Hermitian symmetric space, 
which will be used in Section~\ref{section:cl} and \ref{section:proof}.
\par 
Let $w$ be an isometric embedding from $(N,J,h)$ into 
some Euclidean space $\RR^d$ with sufficiently large integer $d$. 
For fixed $p\in N$,  
we consider the orthogonal decomposition  
$
\RR^d
=
dw_p(T_pN)
\oplus
\left(
dw_p(T_pN)
\right)^{\perp}
$, 
where 
$dw_p:T_pN\to T_{w{\circ}p}\RR^d\cong \RR^d$ is 
the differential of $w$ at $p\in N$ 
and 
$\left(
dw_p(T_pN)
\right)^{\perp}$
is the orthogonal complement of 
$dw_p(T_pN)$ in $\RR^d$.     
We denote the orthogonal projection mapping 
of $\RR^d$ onto $dw_p(T_pN)$ by $P(w{\circ}p)$ 
and define 
$N(w{\circ}p):=I_d-P(w{\circ}p)$, where 
$I_d$ is the identity mapping on $\RR^d$. 
Moreover, we define $J(w{\circ}p)$  
as an action on $\RR^d$ by first projecting onto 
$dw_p(T_pN)$ 
and then applying the complex structure at $p\in N$. 
More precisely, we define $J(w{\circ}p)$ by 
\begin{align}
J(w{\circ}p)
&:=
dw_p\circ J_{p}\circ dw_{w{\circ}p}^{-1}\circ P(w{\circ}p): 
\RR^d\to dw_p(T_pN). 
\label{eq:J(p)}
\end{align} 
We can extend $P(\cdot)$, $N(\cdot)$, and $J(\cdot)$ 
to a smooth linear operator on $\RR^d$ so that 
$P(q)$, $N(q)$, and $J(q)$ make sense for all $q\in \RR^d$
following the argument in e.g. \cite[pp.17]{NSVZ}. 
Though $J(q)$ is not skew-symmetric and the square is not the minus 
identity in general, 
similar properties still hold if $q\in w(N)$.
Indeed, from the definition of $P(\omega{\circ}p)$ and 
$J(\omega{\circ}p)$, it follows that  
\begin{align}
(P(w{\circ}p)Y_1,Y_2)&=(Y_1,P(w{\circ}p)Y_2),  
\label{eq:J0}
\\
(J(w{\circ}p)Y_1,Y_2)&=-(Y_1,J(w{\circ}p)Y_2),   
\label{eq:J(wp)1}
\\
\left(
J(w{\circ}p)
\right)^2Y_3
&=
-P(w{\circ}p)Y_3, 
\label{eq:J(wp)2}
\end{align}
for any $p\in N$ and $Y_1,Y_2,Y_3\in \RR^d$. 
Here and hereafter, we will denote the inner product 
and the norm in $\RR^d$ by  
$(\cdot,\cdot)$ and $|\cdot|$ respectively.  
\par 
Let $u=u(t,x):[-T,T]\times \TT\to (N,J,h)$ be a 
sufficiently smooth  
map into a $2n$-dimensional K\"ahler manifold $(N,J,h)$, 
and set $U:=w{\circ}u$.
For each $(t,x)\in [-T,T]\times \TT$, 
let $\left\{\nu_{2n+1}, \ldots, \nu_d\right\}$ 
denote a smooth local orthonormal 
frame field for the normal bundle $(dw(TN))^{\perp}$
near $U(t,x)\in w(N)$. 
As is well-known,
for $Y\in \Gamma(u^{-1}TN)$,  
$dw_u(\nabla_xY)$ 
(resp. $dw_u(\nabla_tY)$) is nothing but the 
$dw_u(T_uN)$-component of $\p_x\{dw_u(Y)\}$ 
(resp. $\p_t\{dw_u(Y)\}$), 
that is,   
\begin{align}
dw_u(\nabla_xY)
&=
\p_x
\left\{
dw_u(Y)
\right\}
-
\sum_{k=2n+1}^d
(\p_x\{dw_u(Y)\}, \nu_k(U))\nu_k(U)
\nonumber
\\
&=
\p_x\left\{
dw_u(Y)
\right\}
+
\sum_{k=2n+1}^d
(dw_u(Y), \p_x\left\{\nu_k(U)\right\})\nu_k(U)
\nonumber
\\
&=
\p_x\left\{
dw_u(Y)
\right\}
+
\sum_{k=2n+1}^d
(dw_u(Y), D_k(U)U_x)\nu_k(U)
\nonumber
\\
&=
\p_x\left\{
dw_u(Y)
\right\}
+
A(U)(dw_u(Y),U_x), 
\label{eq:cox2}
\\
dw_u(\nabla_tY)
&=
\p_t\left\{
dw_u(Y)
\right\}
+
A(U)(dw_u(Y),U_t). 
\label{eq:cot2}
\end{align}
where $D_k:=\operatorname{grad} \nu_k$ for $k=2n+1,\dots,d$,  
and $A(q)(\cdot,\cdot):=
\displaystyle\sum_{k=2n+1}^d
(\cdot, D_k(q)\cdot)\nu_k(q)$ 
is the second fundamental form at $q\in w(N)$. 
As is well-known, 
$A(q)(\cdot,\cdot):T_qw(N)\times T_qw(N)\to \RR^d$ 
is a symmetric bilinear map, and each $D_k(q)$ is also symmetric, i.e., 
$(D_k(q)V_1,V_2)=(V_1,D_k(q)V_2)$ holds for any 
$V_1,V_2\in T_qw(N)$. 
Furthermore, we can also see $A(q)(\cdot,\cdot)$ as 
a map on $\RR^d\times \RR^d$ by setting 
$A(q)(V_1,V_2):=A(q)(P(q)V_1,P(q)V_2)$ for any $V_1,V_2\in \RR^d$.  
In addition, we note that $J(U)A(U)(Y_1,Y_2)=0$ holds for any 
$Y_1,Y_2:[-T,T]\times \TT\to \RR^d$. 
This comes from the property
\begin{equation}
J(U)\nu_k(U)=0
\qquad (k=2n+1,\ldots,d), 
\label{eq:J1}
\end{equation}
which is obvious by definition.   
In what follows, 
we will often use these properties and sometimes 
without any comments. 
Moreover, 
we will sometimes use the expression $\displaystyle\sum_{k}$ 
and  $\displaystyle\sum_{k,\ell}$
instead of 
$\displaystyle\sum_{k=2n+1}^{d}$
and 
$\displaystyle\sum_{k=2n+1}^{d}
\displaystyle\sum_{\ell=2n+1}^{d}$
respectively. 
Any confusion will not occur.
\par
We define the operator $\p_x(J(U))$ by 
$\p_x(J(U))Y:=\p_x\{J(U)Y\}-J(U)\p_xY$ 
for any $Y:[-T,T]\times \TT\to \RR^d$. 
The following proposition 
comes from the K\"ahler condition 
$\nabla^N J=0$ on $(N,J,h)$. 
\begin{proposition}
\label{proposition:kaehler}
For any $Y:[-T,T]\times \TT\to \RR^d$, we have 
\begin{align}
\p_x(J(U))Y
&=
\sum_{k}
\left(Y,J(U)D_k(U)U_x\right)\nu_k(U)
-\sum_{k}
(Y,\nu_k(U))J(U)D_k(U)U_x
\nonumber
\\
&=
-A(U)(J(U)Y,U_x)
-\sum_{k}
(Y,\nu_k(U))J(U)D_k(U)U_x.
\label{eq:kaehler2}
\end{align}
\end{proposition}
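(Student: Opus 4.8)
The plan is to reduce the extrinsic commutator $\p_x(J(U))Y$ to an intrinsic identity governed by the Kähler condition $\nabla^N J=0$, and then to read off the two explicit forms by tracking which pieces are tangential and which are normal. First I would decompose $Y=P(U)Y+N(U)Y$ into its tangential and normal parts, where $N(U)=I_d-P(U)$ and $N(U)Y=\sum_k(Y,\nu_k(U))\nu_k(U)$. Because $J(U)$ first projects onto $dw_u(T_uN)$, equation \eqref{eq:J1} gives $J(U)Y=J(U)P(U)Y$; writing $P(U)Y=dw_u(\hat Y)$ with $\hat Y:=dw_u^{-1}(P(U)Y)\in\Gamma(u^{-1}TN)$, the definition \eqref{eq:J(p)} yields the clean identity $J(U)Y=dw_u(J_u\hat Y)$.

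Next I would differentiate $J(U)Y=dw_u(J_u\hat Y)$ in $x$ using the structural formula \eqref{eq:cox2}, which converts $\p_x\{dw_u(\cdot)\}$ into $dw_u(\nabla_x\cdot)$ up to a second-fundamental-form term. Applying it to the section $J_u\hat Y$ gives $\p_x\{J(U)Y\}=dw_u(\nabla_x(J_u\hat Y))-A(U)(J(U)Y,U_x)$. The Kähler condition now enters decisively: since $\nabla^N J=0$ the pulled-back complex structure is parallel along $u$, so $\nabla_x(J_u\hat Y)=J_u\nabla_x\hat Y$ and hence $dw_u(\nabla_x(J_u\hat Y))=J(U)(dw_u(\nabla_x\hat Y))$. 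Using \eqref{eq:cox2} once more on $\hat Y$ gives $dw_u(\nabla_x\hat Y)=\p_x\{P(U)Y\}+A(U)(P(U)Y,U_x)$, and the tangential curvature term is annihilated because $J(U)A(U)(\cdot,\cdot)=0$. Collecting these I obtain $\p_x\{J(U)Y\}=J(U)\p_x\{P(U)Y\}-A(U)(J(U)Y,U_x)$.

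Then I would compute the commutator itself: $\p_x(J(U))Y=\p_x\{J(U)Y\}-J(U)\p_x Y=J(U)\p_x\{P(U)Y-Y\}-A(U)(J(U)Y,U_x)=-J(U)\p_x\{N(U)Y\}-A(U)(J(U)Y,U_x)$. The normal discrepancy is evaluated from $N(U)Y=\sum_k(Y,\nu_k(U))\nu_k(U)$ together with $\p_x\{\nu_k(U)\}=D_k(U)U_x$ and $J(U)\nu_k(U)=0$; only the term in which the derivative falls on $\nu_k(U)$ survives, so $J(U)\p_x\{N(U)Y\}=\sum_k(Y,\nu_k(U))J(U)D_k(U)U_x$, which produces the second form in \eqref{eq:kaehler2}. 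The first form then follows purely algebraically: by the skew-symmetry \eqref{eq:J(wp)1} of $J(U)$ and the definition of $A(U)$, one has $-A(U)(J(U)Y,U_x)=-\sum_k(J(U)Y,D_k(U)U_x)\nu_k(U)=\sum_k(Y,J(U)D_k(U)U_x)\nu_k(U)$.

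The calculation is essentially routine once this bookkeeping is fixed; the step I expect to require the most care is the replacement of $\p_x\{P(U)Y\}$ by $\p_x Y$, that is, isolating the normal contribution $-J(U)\p_x\{N(U)Y\}$. It is tempting to treat $J(U)$ as commuting with $\p_x$ on the projected part, but the \emph{entire} nonvanishing right-hand side of \eqref{eq:kaehler2} lives precisely in this discrepancy, so one must resist discarding the normal piece and instead expand $\p_x\{N(U)Y\}$ with care, retaining only the part in which $\p_x$ hits the frame $\nu_k(U)$.
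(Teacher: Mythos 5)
Your proof is correct: the paper itself omits the argument (deferring to Lemma~3.3 of \cite{onodera4}), and your derivation --- writing $J(U)Y=dw_u(J_u\hat Y)$ with $\hat Y=dw_U^{-1}(P(U)Y)$, converting $\p_x$ to $\nabla_x$ via \eqref{eq:cox2}, invoking $\nabla^NJ=0$ to commute $J_u$ past $\nabla_x$, and isolating the normal discrepancy $-J(U)\p_x\{N(U)Y\}=-\sum_k(Y,\nu_k(U))J(U)D_k(U)U_x$ --- is exactly the standard computation the cited lemma carries out with the paper's own toolkit. The passage between the two displayed forms via the skew-symmetry \eqref{eq:J(wp)1} is also handled correctly, so there is nothing to add.
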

The proof is given in \cite[Lemma~3.3]{onodera4}, 
and thus we omit the detail.
\par
We next define the operator $\p_x^m(A(U))$ for $m=1,2,\ldots$ 
inductively by 
\begin{align}
\p_x^m(A(U))(Y_1,Y_2)
&:=\p_x\{\p_x^{m-1}(A(U))(Y_1,Y_2)\}
-\p_x^{m-1}(A(U))(\p_xY_1,Y_2)
\nonumber
\\
&\quad
-\p_x^{m-1}(A(U))(Y_1,\p_xY_2)
\nonumber
\end{align} 
for any $Y_1,Y_2:[-T,T]\times \TT\to \RR^d$.
\begin{proposition}
\label{proposition:sf0}
For any $Y_1,Y_2:[-T,T]\times \TT\to \RR^d$, we have 
\begin{align}
&\p_x(A(U))(Y_1,Y_2)
=
\sum_k
(P(U)Y_1,\p_x(D_k(U)P(U))Y_2)\nu_k(U)
\nonumber
\\
&\quad
\phantom{\p_x(A(U))(Y_1,Y_2)}
+
\sum_k(\p_x(P(U))Y_1,D_k(U)P(U)Y_2)\nu_k(U)
\nonumber
\\&\quad
\phantom{\p_x(A(U))(Y_1,Y_2)}
+
\sum_k(P(U)Y_1,D_k(U)P(U)Y_2)D_k(U)U_x,
\label{eq:A1}
\\
&J(U)\p_x(A(U))(Y_1,Y_2)=
\sum_k(P(U)Y_1,D_k(U)P(U)Y_2)J(U)D_k(U)U_x.
\label{eq:A2}
\end{align}
\end{proposition}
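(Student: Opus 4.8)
The plan is to prove both identities by a direct Leibniz-rule computation starting from the closed form of the second fundamental form. Recall that, by the definition of $A(U)$ together with its extension to $\RR^d\times\RR^d$ via $A(q)(V_1,V_2):=A(q)(P(q)V_1,P(q)V_2)$, one has the explicit expression $A(U)(Y_1,Y_2)=\sum_k(P(U)Y_1,D_k(U)P(U)Y_2)\nu_k(U)$. The operators $\p_x(P(U))$ and $\p_x(D_k(U)P(U))$ appearing in \eqref{eq:A1} are understood exactly as $\p_x(J(U))$ was, namely $\p_x(P(U))Y:=\p_x\{P(U)Y\}-P(U)\p_xY$ and analogously for $D_k(U)P(U)$, so that each measures the part of the derivative falling on the operator rather than on its argument.

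First I would differentiate $A(U)(Y_1,Y_2)$ in $x$, distributing $\p_x$ across the scalar inner product and across the frame field $\nu_k(U)$. For the latter I use $\p_x\{\nu_k(U)\}=D_k(U)U_x$, which is the same relation already invoked in \eqref{eq:cox2}; this produces precisely the term $\sum_k(P(U)Y_1,D_k(U)P(U)Y_2)D_k(U)U_x$, the third summand of \eqref{eq:A1}. Differentiating the inner product then yields four terms: two in which $\p_x$ hits $P(U)$ or $D_k(U)P(U)$, which are the structural derivatives, and two in which $\p_x$ hits $Y_1$ or $Y_2$ directly.

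The key step is to recognize that these last two terms are exactly $A(U)(\p_xY_1,Y_2)$ and $A(U)(Y_1,\p_xY_2)$. Hence, upon forming $\p_x(A(U))(Y_1,Y_2)=\p_x\{A(U)(Y_1,Y_2)\}-A(U)(\p_xY_1,Y_2)-A(U)(Y_1,\p_xY_2)$ from its defining relation, these two terms cancel and only the structural derivatives together with the $D_k(U)U_x$ term survive, giving \eqref{eq:A1}. No geometric input beyond the defining formulas is required here.

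Finally, \eqref{eq:A2} follows by applying $J(U)$ to \eqref{eq:A1}. Since the first two summands on the right of \eqref{eq:A1} are multiples of $\nu_k(U)$, they are annihilated by $J(U)$ on account of the relation $J(U)\nu_k(U)=0$ recorded in \eqref{eq:J1}, while the third summand survives and reproduces the right-hand side of \eqref{eq:A2}. I expect no genuine obstacle in this argument; the only point demanding care is the bookkeeping of which derivative lands on which factor and the consistent use of the operator-derivative conventions, so that the two $Y$-derivative terms are matched and cancelled correctly.
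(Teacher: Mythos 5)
Your proposal is correct and is exactly the ``simple computation based on the definition of $\p_x^m(A(U))$ and the property \eqref{eq:J1}'' that the paper invokes without writing out: Leibniz expansion of $A(U)(Y_1,Y_2)=\sum_k(P(U)Y_1,D_k(U)P(U)Y_2)\nu_k(U)$ using $\p_x\{\nu_k(U)\}=D_k(U)U_x$, cancellation of the two argument-derivative terms against $A(U)(\p_xY_1,Y_2)+A(U)(Y_1,\p_xY_2)$, and annihilation of the normal components by $J(U)$. No discrepancy with the paper's (omitted) argument.
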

\begin{proof}[Proof of Proposition~\ref{proposition:sf0}]
By a simple computation based on 
the definition of $\p_x^m(A(U))$ and the property 
\eqref{eq:J1},  we can easily check 
\eqref{eq:A1} and  \eqref{eq:A2}
hold. 
\end{proof}
Furthermore, 
by the same computation as we obtain 
Proposition~\ref{proposition:sf0}
and by \eqref{eq:kaehler2}, 
we get the following:
\begin{proposition}
\label{proposition:sf}
For any $Y_1,Y_2:[-T,T]\times \TT\to \RR^d$, 
it follows that  
\begin{align}
&\p_x(J(U))\p_x(A(U))(Y_1,Y_2)
\nonumber
\\
&=
-\sum_k(P(U)Y_1,\p_x(D_k(U)P(U))Y_2)J(U)D_k(U)U_x
\nonumber
\\
&\quad
-\sum_k(\p_x(P(U))Y_1,D_k(U)P(U)Y_2)J(U)D_k(U)U_x
\nonumber
\\
&\quad 
-\sum_{k,\ell}(P(U)Y_1,D_k(U)P(U)Y_2)(D_k(U)U_x,\nu_{\ell}(U))
J(U)D_{\ell}(U)U_{\ell}
\nonumber
\\&\quad
-\sum_k A(U)(
(P(U)Y_1,D_k(U)P(U)Y_2)J(U)D_k(U)U_x,U_x),
\label{eq:A3}
\\
&J(U)\p_x^2(A(U))(Y_1,Y_2)
\nonumber
\\
&=
2\sum_k(P(U)Y_1,\p_x(D_k(U)P(U))Y_2)J(U)D_k(U)U_x
\nonumber
\\&\quad
+
2\sum_k(\p_x(P(U))Y_1,D_k(U)P(U)Y_2)J(U)D_k(U)U_x
\nonumber
\\
&\quad
+\sum_k(P(U)Y_1,D_k(U)P(U)Y_2)J(U)\p_x\{D_k(U)U_x\}. 
\label{eq:A4}
\end{align}
\end{proposition}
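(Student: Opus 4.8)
The plan is to obtain both \eqref{eq:A3} and \eqref{eq:A4} by direct computation, feeding the already-proved formulas \eqref{eq:A1} and \eqref{eq:A2} into the inductive definition of $\p_x^m(A(U))$ together with the K\"ahler identity \eqref{eq:kaehler2}, and then tracking cancellations. No geometric input beyond $\nabla^NJ=0$ (encoded in \eqref{eq:kaehler2}), the property \eqref{eq:J1}, and the orthonormality $(\nu_k(U),\nu_{\ell}(U))=\delta_{k\ell}$ should be needed.

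For \eqref{eq:A3}, I would set $Z:=\p_x(A(U))(Y_1,Y_2)$ and apply $\p_x(J(U))$ using the second form of \eqref{eq:kaehler2}, i.e. $\p_x(J(U))Z=-A(U)(J(U)Z,U_x)-\sum_k(Z,\nu_k(U))J(U)D_k(U)U_x$. Since $J(U)\nu_k(U)=0$ by \eqref{eq:J1}, applying $J(U)$ to the right-hand side of \eqref{eq:A1} annihilates its two normal pieces and returns exactly $J(U)Z=J(U)\p_x(A(U))(Y_1,Y_2)$ as in \eqref{eq:A2}; feeding this into $-A(U)(J(U)Z,U_x)$ produces the last line of \eqref{eq:A3}. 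For the remaining term I would read off the inner products $(Z,\nu_k(U))$ from \eqref{eq:A1}: orthonormality collapses the two normal contributions, while the tangential--normal overlap $(D_k(U)U_x,\nu_{\ell}(U))$ survives from the third term of \eqref{eq:A1}. These three pieces reproduce the first three lines of \eqref{eq:A3}.

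For \eqref{eq:A4}, I would begin from the inductive definition $\p_x^2(A(U))(Y_1,Y_2)=\p_x\{\p_x(A(U))(Y_1,Y_2)\}-\p_x(A(U))(\p_xY_1,Y_2)-\p_x(A(U))(Y_1,\p_xY_2)$ and apply $J(U)$. The two lower-order terms are dispatched directly by \eqref{eq:A2}. For the leading term I would write $J(U)\p_xZ=\p_x\{J(U)Z\}-\p_x(J(U))Z$ with $Z=\p_x(A(U))(Y_1,Y_2)$, so that $J(U)Z$ is \eqref{eq:A2} and $\p_x(J(U))Z$ is precisely \eqref{eq:A3}. Expanding $\p_x\{J(U)Z\}$ by the product rule --- differentiating the scalar coefficients $(P(U)Y_1,D_k(U)P(U)Y_2)$ and the vectors $J(U)D_k(U)U_x$ separately, and invoking \eqref{eq:kaehler2} once more on $\p_x(J(U))\{D_k(U)U_x\}$ --- I expect the $A(U)(\cdots)$-term and the $(D_k(U)U_x,\nu_{\ell}(U))$-term to cancel against the matching lines of \eqref{eq:A3}, the coefficient-derivative terms to double, and the two contributions from \eqref{eq:A2} to remove the summands coming from $\p_xY_1$ and $\p_xY_2$. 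What survives is the right-hand side of \eqref{eq:A4}, factors of $2$ included.

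The computation is routine once \eqref{eq:A1} and \eqref{eq:A2} are available; the only delicate point, and the most likely source of error, is the bookkeeping in this last step, where one must check that the terms of \eqref{eq:A3} exactly annihilate their counterparts from the product rule so that precisely the doubled coefficient-derivative terms and the single $J(U)\p_x\{D_k(U)U_x\}$ term of \eqref{eq:A4} remain. I would track carefully which contributions are doubled and which cancel, since a sign or counting slip there would corrupt the stated factors of $2$.
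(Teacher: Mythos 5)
Your proposal is correct and is essentially the argument the paper intends: the paper gives no written proof beyond the remark that these identities follow ``by the same computation as Proposition~\ref{proposition:sf0} and by \eqref{eq:kaehler2}'', and your derivation---feeding \eqref{eq:A1}--\eqref{eq:A2} into the K\"ahler identity to get \eqref{eq:A3}, then combining the inductive definition of $\p_x^2(A(U))$ with the product rule and a second application of \eqref{eq:kaehler2} to $D_k(U)U_x$ for \eqref{eq:A4}---is a faithful, fully detailed realization of exactly that. The cancellations and factors of $2$ you flag as the delicate point do check out as you predict (and the $U_\ell$ in the third line of \eqref{eq:A3} is a typo for $U_x$, consistent with what your computation produces).
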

The following properties on the Riemann curvature tensor $R^N$ 
on $(N,J,h)$
are well-known in Riemannian geometry:  
\begin{proposition}
\label{proposition:R^N}
For any $Y_1, \ldots, Y_4\in \Gamma(u^{-1}TN)$, 
the following properties hold.
\begin{enumerate}
\item[(i)] $R^N(Y_1,Y_2)=-R^N(Y_2,Y_1)$,
\item[(ii)] $h(R^N(Y_1,Y_2)Y_3,Y_4)=h(R^N(Y_3,Y_4)Y_1,Y_2)
=h(R^N(Y_4,Y_3)Y_2,Y_1)$,
\item[(iii)] $R^N(Y_1,Y_2)Y_3+R^N(Y_2,Y_3)Y_1+R^N(Y_3,Y_1)Y_2=0$,
\item[(iv)] $R^N(Y_1,Y_2)J_uY_3=J_u\,R^N(Y_1,Y_2)Y_3$, 
\item[(v)] 
$R^N(J_uY_1,J_uY_2)Y_3=R^N(Y_1,Y_2)Y_3$,
\item[(vi)]
$R^N(J_uY_1,Y_2)Y_3=-R^N(Y_1,J_uY_2)Y_3=R^N(J_uY_2,Y_1)Y_3$,
\item[(vii)]
$\nabla_x
\left\{
R^N(Y_1,Y_2)Y_3
\right\}
=
R^N(\nabla_xY_1,Y_2)Y_3
+
R^N(Y_1,\nabla_xY_2)Y_3
+
R^N(Y_1,Y_2)\nabla_xY_3$.
\end{enumerate} 
\end{proposition}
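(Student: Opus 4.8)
The plan is to obtain Proposition~\ref{proposition:R^N} by verifying each identity as a relation between the tensor fields $R^N$ and $J$ on $N$ itself and then pulling back along $u$. Since (i)--(vi) are pointwise algebraic (tensorial) relations, it suffices to check them for genuine vector fields on $N$, where the Lie brackets appearing in the definition of $R^N$ make sense, after which they evaluate verbatim on the sections $Y_1,\dots,Y_4\in\Gamma(u^{-1}TN)$. Only (vii) genuinely involves the covariant derivative $\nabla_x$ along the map, and it is there that the locally symmetric hypothesis enters.

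First I would dispose of the three classical symmetries (i)--(iii), which hold on any Riemannian manifold with its Levi-Civita connection. The antisymmetry (i) is immediate from the sign convention fixed for $R^N$ in the introduction. The first Bianchi identity (iii) follows from the torsion-freeness of $\nabla^N$ together with the Jacobi identity for the Lie bracket. For the pair symmetry (ii) I would first record the skew-symmetry in the last two slots, $h(R^N(Y_1,Y_2)Y_3,Y_4)=-h(R^N(Y_1,Y_2)Y_4,Y_3)$, which is a consequence of the metric compatibility $\nabla^N h=0$, and then combine it with (i) and (iii) in the standard four-term manipulation to get $h(R^N(Y_1,Y_2)Y_3,Y_4)=h(R^N(Y_3,Y_4)Y_1,Y_2)$; the remaining equality in (ii) is then just (i) applied in each pair of slots.

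Next come the K\"ahler identities (iv)--(vi). For (iv) I would use $\nabla^N J=0$, which gives $\nabla^N_X(J_uY)=J_u\nabla^N_X Y$; substituting this into the definition of $R^N$ lets $J_u$ pass through both covariant derivatives and the bracket term, yielding $R^N(Y_1,Y_2)J_uY_3=J_u R^N(Y_1,Y_2)Y_3$. For (v) I would pass to the $4$-tensor $h(R^N(\cdot,\cdot)\cdot,\cdot)$: since $J_u$ is an $h$-isometry, (iv) gives $h(R^N(Y_1,Y_2)J_uY_3,J_uY_4)=h(R^N(Y_1,Y_2)Y_3,Y_4)$, i.e.\ $J$-invariance in the last two slots, and feeding this into the pair symmetry (ii) twice produces $R^N(J_uY_1,J_uY_2)Y_3=R^N(Y_1,Y_2)Y_3$. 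Finally (vi) follows formally: replacing $Y_2$ by $-J_uY_2$ in (v) and using $J_u^2Y_2=-Y_2$ gives the first equality $R^N(J_uY_1,Y_2)Y_3=-R^N(Y_1,J_uY_2)Y_3$, and the second equality is then (i).

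The last step (vii) is the one requiring the most care, as it is where the locally Hermitian symmetric hypothesis $\nabla^N R^N=0$ is used and the only statement about $\nabla_x$. Recalling that the covariant derivative of the curvature is defined so that $(\nabla^N_X R^N)(Y_1,Y_2)Y_3$ equals $\nabla^N_X\{R^N(Y_1,Y_2)Y_3\}$ minus the three terms in which $\nabla^N_X$ hits a single argument, the vanishing $\nabla^N R^N=0$ is exactly the Leibniz relation (vii) for vector fields on $N$. The point needing attention is that (vii) is stated along $u$ with $\nabla_x$ the pull-back connection, so I would transfer the parallelism $\nabla^N R^N=0$ to $u^{-1}TN$ via the standard compatibility of the pull-back connection with $R^N$; this transfer is routine but is the only place where the pull-back bookkeeping, rather than a pointwise tensorial argument, is genuinely invoked. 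I expect no further obstacle.
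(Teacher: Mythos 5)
Your proposal is correct: the paper itself gives no proof of Proposition~\ref{proposition:R^N}, stating the properties as well-known facts of Riemannian/K\"ahler geometry and only remarking that (vii) comes from $\nabla^N R^N=0$. Your sketch — the classical symmetries (i)--(iii) from torsion-freeness and metric compatibility, (iv)--(vi) from $\nabla^N J=0$ together with the pair symmetry and $J_u^2=-\mathrm{id}$, and (vii) from the definition of $\nabla^N R^N$ transferred to the pull-back connection — is exactly the standard derivation the paper implicitly relies on, and it is sound.
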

Note that the property (vii) comes from 
$\nabla^N R^N=0$ imposed on $(N,J,h)$. 
These properties were effectively applied 
to show the local existence of the solution to 
\eqref{eq:pde}-\eqref{eq:data} in \cite{onodera4}. 
On the other hand, in order to show the uniqueness,   
we will require using these properties 
after acting $dw_u$ on them.
For this purpose,
we now introduce the following:
\begin{definition}
\label{definition:newdef}
We define the operator $R$ by 
\begin{align}
R(Y_1,Y_2)Y_3
&:=
dw_u
\left(
R^N(dw_U^{-1}(P(U)Y_1), dw_U^{-1}(P(U)Y_2))
dw_U^{-1}(P(U)Y_3)
\right)
\label{eq:newdef}
\end{align}
for any $Y_1, Y_2, Y_3:[-T,T]\times \TT\to \RR^d$. 
\end{definition}
By the definition, it is immediate to see 
\begin{align}
&R(P(U)\cdot,\cdot)=R(\cdot,P(U)\cdot)
=R(\cdot,\cdot)P(U)=R(\cdot,\cdot),
\label{eq:RP}
\\ 
&R(A(U)(\cdot,\cdot),\cdot)=
R(\cdot, A(U)(\cdot,\cdot))=
R(\cdot,\cdot)A(U)(\cdot,\cdot)
=0.
\label{eq:RA}
\end{align} 
On the local expression of $R$, 
let us recall the following: 
\begin{proposition}
\label{proposition:curvature3}
For any $\Xi_1,\Xi_2,\Xi_3\in \Gamma(u^{-1}TN)$, 
the following relation holds.
\begin{align}
\nonumber
&dw_u
\left(
R^N(\Xi_1,\Xi_2)\Xi_3
\right)
=
\sum_{k=2n+1}^d
\left(
dw_u(\Xi_3), D_k(U)dw_u(\Xi_2)
\right)
P(U)D_k(U)dw_u(\Xi_1)
\\
&\phantom{dw_u
\left(
R^N(Y_1,Y_2)Y_3
\right)}
\quad 
-
\sum_{k=2n+1}^d
\left(
dw_u(\Xi_3), D_k(U)dw_u(\Xi_1)
\right)
P(U)D_k(U)dw_u(\Xi_2).
\nonumber 
\end{align}
\end{proposition}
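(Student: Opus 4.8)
The plan is to recognize Proposition~\ref{proposition:curvature3} as the Gauss equation for the isometric embedding $w\colon(N,J,h)\to\RR^d$, rewritten in the present notation. Since $w$ is isometric, $dw$ intertwines the Levi-Civita connection $\nabla^N$ with the connection induced on the submanifold $w(N)\subset\RR^d$, so that $dw_u(R^N(\Xi_1,\Xi_2)\Xi_3)$ is exactly the pushforward of the intrinsic curvature of $w(N)$. The Gauss equation expresses this intrinsic curvature through the second fundamental form, and the content of the proposition is to match the two sides term by term.

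First I would read \eqref{eq:cox2} as the Gauss formula: the normal component of $\p_x\{dw_u(Y)\}$ equals $-A(U)(dw_u(Y),U_x)$, so that the classical vector-valued second fundamental form $\mathrm{II}$ of $w(N)$ coincides with $-A(U)$; in particular $(\mathrm{II}(V_1,V_2),\nu_k(U))=-(V_1,D_k(U)V_2)$ for tangent $V_1,V_2$. Differentiating the relation $(dw_u(Y),\nu_k(U))=0$ and using the symmetry of $D_k(U)$ on $T_{U}w(N)$, I would identify the associated shape operator as $S_{\nu_k}=-P(U)D_k(U)$, i.e. $(S_{\nu_k}V,W)=(\mathrm{II}(V,W),\nu_k(U))$ for tangent $V,W$.

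With these identifications the standard vector form of the Gauss equation,
\[
R^{N}(X,Y)Z=S_{\mathrm{II}(Y,Z)}X-S_{\mathrm{II}(X,Z)}Y
\]
(applied after $dw_u$), becomes the asserted formula upon substituting $\mathrm{II}=-A(U)$ and $S_{\nu_k}=-P(U)D_k(U)$: the two minus signs cancel in each term, and the symmetry $(dw_u(\Xi_2),D_k(U)dw_u(\Xi_3))=(dw_u(\Xi_3),D_k(U)dw_u(\Xi_2))$ puts the coefficients into the form displayed in the statement. Both sides visibly lie in $dw_u(T_uN)=\mathrm{range}\,P(U)$, so no normal component is lost.

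The one genuinely delicate point is deriving the vector Gauss equation itself in the present parametrized setting rather than merely quoting it: since $R^N$ is tensorial it suffices to evaluate on $X=U_x$, $Y=U_y$ for an auxiliary two-parameter map $u(x,y)$ with $[\p_x,\p_y]=0$, compute $dw_u(\nabla_x\nabla_yY-\nabla_y\nabla_xY)$ by applying \eqref{eq:cox2} twice, and take the tangential projection. Here the flatness of the ambient $\RR^d$ annihilates the ambient second derivatives, while the cross terms reassemble into products of two copies of $A(U)$ via the Weingarten relation; the bookkeeping of the normal-bundle terms and of the sign convention for $A(U)$ (immaterial in the end, as the curvature is quadratic in $A(U)$) is where care is needed. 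The orthogonality \eqref{eq:J1} and the symmetry of $D_k(U)$ are exactly what make the projections commute with differentiation in this step.
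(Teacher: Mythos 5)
Your proposal is correct and follows essentially the same route as the paper, which simply identifies the proposition as an expression of the Gauss--Codazzi formula and refers to \cite[Lemma~3.5]{onodera3} for the proof. Your sign bookkeeping checks out: with $\mathrm{II}=-A(U)$ and $S_{\nu_k}=-P(U)D_k(U)$ the two minus signs cancel in each term of $R^N(X,Y)Z=S_{\mathrm{II}(Y,Z)}X-S_{\mathrm{II}(X,Z)}Y$, and the symmetry of $D_k(U)$ on $T_{U}w(N)$ gives exactly the displayed coefficients, so you have in effect supplied the details the paper outsources.
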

Proposition~\ref{proposition:curvature3} 
is a kind of expression of 
the Gauss-Codazzi formula in Riemannian geometry, 
and the proof can be seen in \cite[Lemma~3.5]{onodera3}. 
Using Proposition~\ref{proposition:curvature3} with 
$\Xi_i=dw_U^{-1}(P(U)Y_i)\in \Gamma(u^{-1}TN)$, 
$i=1,2,3$, 
we have the following
\begin{proposition}
\label{proposition:embcurvature}
For any $Y_1, Y_2, Y_3:[-T,T]\times \TT\to \RR^d$, 
it follows that
\begin{align}
R(Y_1,Y_2)Y_3
&:=
\sum_{k=2n+1}^d
\left(
P(U)Y_3, D_k(U)P(U)Y_2
\right)
P(U)D_k(U)P(U)Y_1
\nonumber
\\
&
\quad 
-
\sum_{k=2n+1}^d
\left(
P(U)Y_3, D_k(U)P(U)Y_1
\right)
P(U)D_k(U)P(U)Y_2.
\label{eq:embcurvature}
\end{align}
\end{proposition}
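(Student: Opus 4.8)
The plan is to obtain \eqref{eq:embcurvature} as an immediate consequence of the Gauss--Codazzi expression in Proposition~\ref{proposition:curvature3}, specialized to the particular sections $\Xi_i:=dw_U^{-1}(P(U)Y_i)$ for $i=1,2,3$. The point is that Definition~\ref{definition:newdef} was arranged precisely so that $R(Y_1,Y_2)Y_3$ coincides with $dw_u(R^N(\Xi_1,\Xi_2)\Xi_3)$ for exactly this choice of $\Xi_i$; hence the entire task reduces to rewriting the right-hand side of Proposition~\ref{proposition:curvature3} in terms of the $Y_i$.

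First I would confirm that each $\Xi_i$ is a well-defined element of $\Gamma(u^{-1}TN)$. Since $P(U)$ is the orthogonal projection of $\RR^d$ onto $dw_u(T_uN)$, the vector $P(U)Y_i$ lies in $dw_u(T_uN)$, on which $dw_U^{-1}$ is defined; thus $\Xi_i=dw_U^{-1}(P(U)Y_i)\in T_uN$ depends smoothly on $(t,x)$ and indeed gives a section of the pull-back bundle. The key identity I would then record is
\begin{equation}
dw_u(\Xi_i)=dw_u\bigl(dw_U^{-1}(P(U)Y_i)\bigr)=P(U)Y_i,
\nonumber
\end{equation}
which merely expresses that $dw_u$ and $dw_U^{-1}$ are mutually inverse on $dw_u(T_uN)$.

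With this in hand, the remainder is a direct substitution. By Definition~\ref{definition:newdef}, the left-hand side of the relation in Proposition~\ref{proposition:curvature3} is exactly $R(Y_1,Y_2)Y_3$. On the right-hand side, every occurrence of $dw_u(\Xi_1)$, $dw_u(\Xi_2)$, $dw_u(\Xi_3)$ is replaced by $P(U)Y_1$, $P(U)Y_2$, $P(U)Y_3$ respectively, turning the two sums into precisely the two sums on the right of \eqref{eq:embcurvature}. No idempotency of $P(U)$ nor any further manipulation is required, since each slot of the formula receives a single already-projected argument.

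I do not expect a genuine obstacle here: the analytic and geometric content is entirely carried by Proposition~\ref{proposition:curvature3}, and what remains is the bookkeeping of the pull-back. The only point deserving a moment of care is the compatibility between the projection $P(U)$ and the inverse differential $dw_U^{-1}$, that is, verifying the displayed identity $dw_u(\Xi_i)=P(U)Y_i$ and the fact that the $\Xi_i$ are admissible sections; once these are in place, \eqref{eq:embcurvature} follows verbatim from the specialized formula.
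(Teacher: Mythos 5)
Your proposal is correct and coincides with the paper's own derivation: the paper obtains \eqref{eq:embcurvature} precisely by applying Proposition~\ref{proposition:curvature3} with $\Xi_i=dw_U^{-1}(P(U)Y_i)$ and using that $dw_u(\Xi_i)=P(U)Y_i$, so that Definition~\ref{definition:newdef} identifies the left-hand side with $R(Y_1,Y_2)Y_3$. Nothing further is needed.
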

If $\Xi_1,\Xi_2, \Xi_3\in \Gamma(u^{-1}TN)$, 
then \eqref{eq:newdef} with $P(U)Y_i=Y_i=dw_u(\Xi_i)$, 
$i=1,2,3$, implies 
\begin{equation}
dw_u
\left(
R^N(\Xi_1,\Xi_2)\Xi_3
\right)=R(dw_u(\Xi_1),dw_u(\Xi_2))dw_u(\Xi_3).
\label{eq:mm1}
\end{equation}
Using them, we have the following 
properties for $R$ corresponding to (i) -(vii) in Proposition~\ref{proposition:R^N} for $R^N$.
\begin{proposition}
\label{proposition:R}
For any $Y_1,Y_2,Y_3,Y_4:[-T,T]\times \TT\to \RR^d$, 
the following properties hold.
\begin{align}
&R(Y_1,Y_2)=-R(Y_2,Y_1)
\label{eq:R1}
\\
&(R(Y_1,Y_2)Y_3,Y_4)=(R(Y_3,Y_4)Y_1,Y_2)
=(R(Y_4,Y_3)Y_2,Y_1),
\label{eq:R2}
\\
&R(Y_1,Y_2)Y_3+R(Y_2,Y_3)Y_1+R(Y_3,Y_1)Y_2=0,
\label{eq:R3}
\\
&R(Y_1,Y_2)J(U)Y_3=J(U)R(Y_1,Y_2)Y_3, 
\label{eq:R4}
\\
&R(J(U)Y_1,J(U)Y_2)Y_3=R(Y_1,Y_2)Y_3,
\label{eq:R5}
\\
&R(J(U)Y_1,Y_2)Y_3=-R(Y_1,J(U)Y_2)Y_3
=R(J(U)Y_2,Y_1)Y_3,
\label{eq:R6} 
\\
&\p_x\left\{
R(Y_1,Y_2)Y_3
\right\}
\nonumber
\\
&=R(\p_x\{P(U)Y_1\},Y_2)Y_3+R(Y_1,\p_x\{P(U)Y_2\})Y_3
\nonumber
\\
&\quad 
+R(Y_1,Y_2)\p_x\{P(U)Y_3\}
-A(U)(R(Y_1,Y_2)Y_3,U_x).
\label{eq:R7}
\end{align}
\end{proposition}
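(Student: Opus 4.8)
The plan is to prove the algebraic identities \eqref{eq:R1}--\eqref{eq:R6} by reducing each of them to the corresponding pointwise property of $R^N$ recorded in Proposition~\ref{proposition:R^N}, and to prove the differential identity \eqref{eq:R7} by combining the covariant-derivative formula \eqref{eq:cox2} with property (vii) of $R^N$, which is the place where the assumption $\nabla^N R^N=0$ is consumed.

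For \eqref{eq:R1}--\eqref{eq:R6} I would first use \eqref{eq:RP} to replace each argument $Y_i$ by its tangential part $P(U)Y_i$, and set $\Xi_i:=dw_U^{-1}(P(U)Y_i)\in\Gamma(u^{-1}TN)$, so that $P(U)Y_i=dw_u(\Xi_i)$. Then \eqref{eq:mm1} gives $R(Y_1,Y_2)Y_3=dw_u(R^N(\Xi_1,\Xi_2)\Xi_3)$, which turns each identity into one for $R^N$ applied to the sections $\Xi_i$. Thus \eqref{eq:R1} and \eqref{eq:R3} follow at once from (i) and (iii). For \eqref{eq:R4}--\eqref{eq:R6} I would record the two facts that $J(U)dw_u(\Xi)=dw_u(J_u\Xi)$ (immediate from \eqref{eq:J(p)}) and that $J(U)Y_3$ is tangential with $dw_U^{-1}(P(U)J(U)Y_3)=J_u\Xi_3$; then (iv), (v), (vi) transfer directly. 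The identity \eqref{eq:R2} additionally uses that $dw_u$ is isometric, so that $(dw_u(\Xi),dw_u(\Xi'))=h(\Xi,\Xi')$; since $R(Y_1,Y_2)Y_3$ is tangential, pairing it against $Y_4$ equals pairing it against $P(U)Y_4=dw_u(\Xi_4)$, and the symmetries of the inner product then follow from (ii).

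The substantive identity is \eqref{eq:R7}. I would write $F:=R^N(\Xi_1,\Xi_2)\Xi_3\in\Gamma(u^{-1}TN)$, so that $R(Y_1,Y_2)Y_3=dw_u(F)$ by the reduction above. Differentiating and splitting $\p_x\{dw_u(F)\}$ into tangential and normal parts via \eqref{eq:cox2} yields $\p_x\{dw_u(F)\}=dw_u(\nabla_xF)-A(U)(dw_u(F),U_x)$, because the tangential projection of $\p_x\{dw_u(F)\}$ is $dw_u(\nabla_xF)$ while its normal projection is $-A(U)(dw_u(F),U_x)$. The last term is exactly $-A(U)(R(Y_1,Y_2)Y_3,U_x)$. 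For the first term I would expand $\nabla_xF=R^N(\nabla_x\Xi_1,\Xi_2)\Xi_3+R^N(\Xi_1,\nabla_x\Xi_2)\Xi_3+R^N(\Xi_1,\Xi_2)\nabla_x\Xi_3$ by property (vii) of $R^N$, and convert each summand back using \eqref{eq:mm1}.

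The final bookkeeping is to match $dw_u(\nabla_x\Xi_i)$ with the terms in the statement. Since \eqref{eq:cox2} gives $dw_u(\nabla_x\Xi_i)=P(U)\p_x\{P(U)Y_i\}$, and since \eqref{eq:RP} lets $R$ absorb a leading $P(U)$ in any slot, I obtain $R(dw_u(\nabla_x\Xi_1),Y_2)Y_3=R(\p_x\{P(U)Y_1\},Y_2)Y_3$ and likewise for the other two slots, giving precisely the first three terms on the right of \eqref{eq:R7}. I expect \eqref{eq:R7} to be the main obstacle: the delicate points are separating the tangential and normal parts of $\p_x\{dw_u(F)\}$ through the second fundamental form (which is what produces the curvature-times-$A$ term) and verifying that the $P(U)$-projections hidden inside $R$ let the covariant derivatives $\nabla_x\Xi_i$ agree with the bare derivatives $\p_x\{P(U)Y_i\}$ written in the statement. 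Everything else reduces, via $R(Y_1,Y_2)Y_3=dw_u(R^N(\Xi_1,\Xi_2)\Xi_3)$, to a routine transfer from $R^N$.
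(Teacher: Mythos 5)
Your proposal is correct and follows essentially the same route as the paper: the algebraic identities \eqref{eq:R1}--\eqref{eq:R6} are transferred from Proposition~\ref{proposition:R^N} via $\Xi_i=dw_U^{-1}(P(U)Y_i)$, \eqref{eq:mm1}, and \eqref{eq:RP}, while \eqref{eq:R7} is obtained by applying \eqref{eq:cox2} to $R^N(\Xi_1,\Xi_2)\Xi_3$ and to each $\Xi_i$, invoking property (vii) (i.e., $\nabla^NR^N=0$), and absorbing the resulting second-fundamental-form terms inside $R$ via \eqref{eq:RA} (equivalently, your $P(U)$-projection bookkeeping). The only cosmetic difference is that the paper compares the two $dw_u$-images of the covariant identity rather than rearranging \eqref{eq:cox2} for $\p_x\{dw_u(F)\}$ first, which is the same computation in a different order.
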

\begin{proof}[Proof of Proposition~\ref{proposition:R}]
It is easy to see \eqref{eq:R1} follows from the definition of $R$. 
It is obvious that \eqref{eq:R2}, $\ldots$, \eqref{eq:R7}
are obtained by acting $dw_u$ to (ii), $\ldots$, (vii) 
in Proposition~\ref{proposition:R^N} respectively. 
We omit the detail, but state only how to obtain \eqref{eq:R7}, 
as it corresponds to the key property 
(vii) coming from $\nabla^N R^N=0$.
To obtain \eqref{eq:R7}, note first that there exists $\Xi_i\in \Gamma(u^{-1}TN)$ such that 
$dw_u(\Xi_i)=P(U)Y_i$ for each $i=1,2,3$.
From (vii) in Proposition~\ref{proposition:R^N}, it follows that 
$$
\nabla_x
\left\{
R^N(\Xi_1,\Xi_2)\Xi_3
\right\}
=
R^N(\nabla_x\Xi_1,\Xi_2)\Xi_3
+
R^N(\Xi_1,\nabla_x\Xi_2)\Xi_3
+
R^N(\Xi_1,\Xi_2)\nabla_x\Xi_3.
$$
Using \eqref{eq:cox2} with 
$Y=R^N(\Xi_1,\Xi_2)\Xi_3$, 
\eqref{eq:mm1}, 
and \eqref{eq:RP}, 
we have
\begin{align}
&dw_u\left(
\nabla_x
\left\{
R^N(\Xi_1,\Xi_2)\Xi_3
\right\}
\right)
\nonumber
\\
&
=
\p_x\left\{
dw_u\left(
R^N(\Xi_1,\Xi_2)\Xi_3
\right)
\right\}
+A(U)(dw_u\left(
R^N(\Xi_1,\Xi_2)\Xi_3
\right),U_x)
\nonumber
\\
&=
\p_x\left\{
R(dw_u(\Xi_1),dw_u(\Xi_2))dw_u(\Xi_3)
\right\}
+A(U)(R(dw_u(\Xi_1),dw_u(\Xi_2))dw_u(\Xi_3),U_x)
\nonumber
\\
&=
\p_x\left\{
R(P(U)Y_1,P(U)Y_2)P(U)Y_3
\right\}
+A(U)(R(P(U)Y_1,P(U)Y_2)P(U)Y_3,U_x)
\nonumber
\\
&=
\p_x\left\{
 R(Y_1,Y_2)Y_3
 \right\}
 +A(U)(R(Y_1,Y_2)Y_3,U_x).
\nonumber
\end{align}
In the same way, 
we have 
\begin{align}
&dw_u(
R^N(\nabla_x\Xi_1,\Xi_2)\Xi_3
+
R^N(\Xi_1,\nabla_x\Xi_2)\Xi_3
+
R^N(\Xi_1,\Xi_2)\nabla_x\Xi_3
)
\nonumber
\\
&=
R(dw_u(\nabla_x\Xi_1),dw_u(\Xi_2))dw_u(\Xi_3)
+
R(dw_u(\Xi_1),dw_u(\nabla_x\Xi_2))dw_u(\Xi_3)
\nonumber
\\&\quad
+
R(dw_u(\Xi_1),dw_u(\Xi_2))dw_u(\nabla_x\Xi_3)
\nonumber
\\
&=
R(\p_x\left\{dw_u(\Xi_1)\right\}
+A(U)(dw_u(\Xi_1),U_x),dw_u(\Xi_2))dw_u(\Xi_3)
\nonumber
\\&\quad 
+
R(dw_u(\Xi_1),\p_x\left\{dw_u(\Xi_2)\right\}
+A(U)(dw_u(\Xi_2),U_x))
dw_u(\Xi_3)
\nonumber
\\&\quad
+
R(dw_u(\Xi_1),dw_u(\Xi_2))
(\p_x\left\{dw_u(\Xi_3)\right\}
+A(U)(dw_u(\Xi_3),U_x))
\nonumber
\\
&=
R(\p_x\{P(U)Y_1\}+A(U)(P(U)Y_1,U_x),P(U)Y_2)P(U)Y_3
\nonumber
\\
&\quad
+
R(P(U)Y_1,\p_x\{P(U)Y_2\}
+A(U)(P(U)Y_2,U_x))
P(U)Y_3
\nonumber
\\&\quad
+
R(P(U)Y_1,P(U)Y_2)
(\p_x\{P(U)Y_3\}
+A(U)(P(U)Y_3,U_x))
\nonumber
\\
&=
R(\p_x\{P(U)Y_1\},Y_2)Y_3
+
R(Y_1,\p_x\{P(U)Y_2\})Y_3
+
R(Y_1,Y_2)
\p_x\{P(U)Y_3\},
\nonumber
\end{align}
where in the final equality we used \eqref{eq:RA}.
Comparing them, we obtain \eqref{eq:R7}. 
\end{proof}
We next set $\mathcal{U}=dw_u(\nabla_xu_x)$ as given  
in Introduction.
\begin{definition}
\label{definition:B_i}
We define operators $B_i(U)$, $i=1,2,3$, by 
\begin{align}
B_1(U)Y
&:=
R(Y,\mathcal{U})J(U)U_x-R(Y,U_x)J(U)\mathcal{U},
\nonumber
\\
B_2(U)Y
&:=
R(Y,\mathcal{U})J(U)U_x+
R(Y,J(U)U_x)\mathcal{U},
\nonumber
\\
B_3(U)Y
&:=
R(Y,U_x)J(U)\mathcal{U}
+R(Y,J(U)\mathcal{U})U_x
\nonumber
\end{align}
for any $Y:[-T,T]\times \TT\to \RR^d$. 
\end{definition}
\begin{proposition}
\label{proposition:B_isym}
Each of 
$B_i(U)$ \ ($i=1,2,3$) is symmetric, that is, 
\begin{equation}
(B_i(U)Y,Z)=(Y,B_i(U)Z)
\label{eq:symmm}
\end{equation}
holds for any $Y,Z:[-T,T]\times \TT\to \RR^d$. 
\end{proposition}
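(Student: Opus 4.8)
The plan is to exploit the algebraic symmetries of $R$ collected in Proposition~\ref{proposition:R}, thereby reducing \eqref{eq:symmm} for each $B_i(U)$ to the behaviour under transposition of the elementary operator $Y\mapsto R(Y,P)Q$. First I would record the key observation: for any fixed $P,Q:[-T,T]\times\TT\to\RR^d$, the transpose of $Y\mapsto R(Y,P)Q$ with respect to $(\cdot,\cdot)$ is $Y\mapsto R(Y,Q)P$. Indeed, the last equality in \eqref{eq:R2} gives $(R(Y,P)Q,Z)=(R(Z,Q)P,Y)$ for all $Y,Z$, which is exactly this assertion.

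With this in hand I would rewrite the three operators as combinations of such elementary operators. Since
\[
B_2(U)Y=R(Y,\UU)J(U)U_x+R(Y,J(U)U_x)\UU,
\qquad
B_3(U)Y=R(Y,U_x)J(U)\UU+R(Y,J(U)\UU)U_x,
\]
each of $B_2(U)$ and $B_3(U)$ is manifestly of the form $L+L^{\ast}$, where $L$ is the elementary operator with $(P,Q)=(\UU,J(U)U_x)$ and $(P,Q)=(U_x,J(U)\UU)$ respectively. Hence \eqref{eq:symmm} for $i=2,3$ follows at once from the observation above, with no further computation.

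The remaining, and genuinely harder, case is $B_1(U)Y=R(Y,\UU)J(U)U_x-R(Y,U_x)J(U)\UU$, which is an \emph{antisymmetric} rather than symmetric combination of elementary operators. By the transpose rule its adjoint is $Y\mapsto R(Y,J(U)U_x)\UU-R(Y,J(U)\UU)U_x$, so \eqref{eq:symmm} for $i=1$ is equivalent to the pointwise identity
\[
R(Y,J(U)U_x)\UU-R(Y,J(U)\UU)U_x=R(Y,\UU)J(U)U_x-R(Y,U_x)J(U)\UU
\]
for all $Y$. I expect this to be the main obstacle, and I would prove it by showing that both sides equal $-R(\UU,U_x)J(U)Y$. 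For the right-hand side, \eqref{eq:R4} pulls $J(U)$ out of the last slot, after which the first Bianchi identity \eqref{eq:R3} applied to the triple $(Y,\UU,U_x)$ gives $R(Y,\UU)U_x-R(Y,U_x)\UU=-R(\UU,U_x)Y$, and \eqref{eq:R4} moves $J(U)$ back in. For the left-hand side, \eqref{eq:R6} transfers $J(U)$ off the second slot onto $Y$, converting the expression into $R(J(U)Y,\UU)U_x-R(J(U)Y,U_x)\UU$, to which the same Bianchi computation (now with $J(U)Y$ in place of $Y$) applies. Matching the two outputs closes the argument. The only delicate point is the consistent bookkeeping of signs when relocating $J(U)$ between slots via \eqref{eq:R4} and \eqref{eq:R6}; once that identity is secured, everything else is direct substitution, so I anticipate no structural difficulty beyond the $B_1(U)$ case.
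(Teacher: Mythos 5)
Your proof is correct and follows essentially the same route as the paper: the cases $i=2,3$ are immediate from \eqref{eq:R2}, and the case $i=1$ rests on the first Bianchi identity \eqref{eq:R3} combined with \eqref{eq:R1}, \eqref{eq:R4}, \eqref{eq:R6}. The only cosmetic difference is that you organize the $i=1$ computation as a pointwise operator identity (in effect showing $B_1(U)=-R(\UU,U_x)J(U)$, a product of commuting skew-symmetric operators) rather than manipulating the inner product $(B_1(U)Y,Z)$ directly as the paper does.
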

\begin{proof}[Proof of Proposition~\ref{proposition:B_isym}]
For $i=1$ , it follows that 
\begin{align}
(B_1(U)Y,Z)
&=
(R(Y,\mathcal{U})J(U)U_x,Z)
-(R(Y,U_x)J(U)\mathcal{U},Z)
\nonumber
\\
&=
-(R(\mathcal{U},J(U)U_x)Y,Z)
-(R(J(U)U_x,Y)\mathcal{U},Z)
\nonumber
\\
&\quad
+(R(U_x,J(U)\mathcal{U})Y,Z)
+(R(J(U)\mathcal{U},Y)U_x,Z)
\quad
(\because (\eqref{eq:R3}))
\nonumber
\\
&=
-(R(J(U)U_x,Y)\mathcal{U},Z)
+(R(J(U)\mathcal{U},Y)U_x,Z)
\quad 
(\because (\eqref{eq:R1}, \eqref{eq:R6}))
\nonumber
\\
&=
(R(Z,\mathcal{U})J(U)U_x,Y)
-(R(Z,U_x)J(U)\mathcal{U},Y)
\quad
(\because (\eqref{eq:R1}, \eqref{eq:R2})
\nonumber
\\
&=(B_1(U)Z,Y).
\nonumber
\end{align}
For $i=2$ and $i=3$, \eqref{eq:symmm}  follows from \eqref{eq:R2}. 
\end{proof}
\begin{proposition}
\label{proposition:Rsym}
For any $Y:[-T,T]\times \TT\to \RR^d$, it follows that 
\begin{align}
R(Y,\mathcal{U})J(U)U_x
&=\frac{1}{2}R(J(U)\mathcal{U},U_x)Y
+\left(
\frac{1}{2}B_1(U)
+\frac{1}{4}B_2(U)
+\frac{1}{4}B_3(U)
\right)Y, 
\label{eq:ppp1}
\\
R(Y,U_x)J(U)\mathcal{U}
&=\frac{1}{2}R(J(U)\mathcal{U},U_x)Y
-\left(
\frac{1}{2}B_1(U)
-\frac{1}{4}B_2(U)
-\frac{1}{4}B_3(U)
\right)Y. 
\label{eq:ppp2}
\end{align}
\end{proposition}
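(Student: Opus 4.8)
The plan is to recognize that, despite their different appearance, the two identities \eqref{eq:ppp1} and \eqref{eq:ppp2} are not independent: after inserting the definitions of $B_1(U)$, $B_2(U)$, $B_3(U)$ from Definition~\ref{definition:B_i} and collecting like terms, each of them collapses to one and the same purely algebraic curvature identity. First I would abbreviate
\[
X_1:=R(Y,\mathcal{U})J(U)U_x,\quad X_2:=R(Y,U_x)J(U)\mathcal{U},\quad X_3:=R(Y,J(U)U_x)\mathcal{U},
\]
\[
X_4:=R(Y,J(U)\mathcal{U})U_x,\quad T:=R(J(U)\mathcal{U},U_x)Y,
\]
so that $B_1(U)Y=X_1-X_2$, $B_2(U)Y=X_1+X_3$, and $B_3(U)Y=X_2+X_4$. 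Substituting into the right-hand side of \eqref{eq:ppp1} and simplifying, the claim $X_1=\tfrac12 T+\tfrac12(X_1-X_2)+\tfrac14(X_1+X_3)+\tfrac14(X_2+X_4)$ is seen to be equivalent to $X_1+X_2-X_3-X_4=2T$; the same elementary bookkeeping shows that \eqref{eq:ppp2} is equivalent to the very same relation. Thus everything reduces to establishing a single identity, which is a statement about the K\"ahler-type symmetries of $R$ alone and involves no analysis.

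The heart of the matter is therefore to prove
\[
R(Y,\mathcal{U})J(U)U_x+R(Y,U_x)J(U)\mathcal{U}-R(Y,J(U)U_x)\mathcal{U}-R(Y,J(U)\mathcal{U})U_x=2R(J(U)\mathcal{U},U_x)Y.
\]
My strategy is to move every $J(U)$ out of the curvature slots using the K\"ahler relations \eqref{eq:R4} and \eqref{eq:R6}, and then to reduce what remains by the first Bianchi identity \eqref{eq:R3} together with the skew-symmetry \eqref{eq:R1}. Concretely, \eqref{eq:R4} rewrites the first two terms on the left as $J(U)R(Y,\mathcal{U})U_x$ and $J(U)R(Y,U_x)\mathcal{U}$, while \eqref{eq:R6} turns the last two into $R(J(U)Y,U_x)\mathcal{U}$ and $R(J(U)Y,\mathcal{U})U_x$. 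On the right, applying \eqref{eq:R1} and \eqref{eq:R3} to $R(J(U)\mathcal{U},U_x)Y$ expresses it as $J(U)R(Y,U_x)\mathcal{U}+R(J(U)Y,\mathcal{U})U_x$.

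Comparing the two sides, the required identity is seen to be equivalent to the reduced relation
\[
J(U)\bigl[R(Y,\mathcal{U})U_x-R(Y,U_x)\mathcal{U}\bigr]=R(J(U)Y,\mathcal{U})U_x-R(J(U)Y,U_x)\mathcal{U}.
\]
I would close this by invoking \eqref{eq:R3} twice. The Bianchi identity applied to the triple $(Y,\mathcal{U},U_x)$, combined with \eqref{eq:R1}, collapses the bracket on the left to $-R(\mathcal{U},U_x)Y$, so that by \eqref{eq:R4} the left-hand side equals $-R(\mathcal{U},U_x)J(U)Y$; applying \eqref{eq:R3} to the triple $(J(U)Y,\mathcal{U},U_x)$ collapses the right-hand side to the same vector $-R(\mathcal{U},U_x)J(U)Y$. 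This matches both sides, which completes the argument for the key identity and hence for \eqref{eq:ppp1} and \eqref{eq:ppp2}.

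I expect the only genuine obstacle to be bookkeeping rather than anything conceptual: keeping the signs straight when a factor $J(U)$ sits in the first curvature slot (governed by \eqref{eq:R6}) as opposed to the output slot (governed by \eqref{eq:R4}), and making sure each application of the Bianchi identity \eqref{eq:R3} uses its three arguments in the correct cyclic order. Since all the operators involved already land in $dw_u(\Gamma(u^{-1}TN))$ by \eqref{eq:RP}, the projections $P(U)$ play no active role here and need not be tracked, which keeps the computation manageable.
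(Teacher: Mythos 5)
Your proposal is correct and follows essentially the same route as the paper: both arguments reduce \eqref{eq:ppp1} and \eqref{eq:ppp2} to the single identity $R(Y,\UU)J(U)U_x+R(Y,U_x)J(U)\UU-R(Y,J(U)U_x)\UU-R(Y,J(U)\UU)U_x=2R(J(U)\UU,U_x)Y$, which is exactly the paper's \eqref{eq:notee2}, and both prove it with the first Bianchi identity \eqref{eq:R3} combined with the K\"ahler symmetries \eqref{eq:R1}, \eqref{eq:R4}, \eqref{eq:R6}. The only difference is cosmetic bookkeeping (which triples the Bianchi identity is applied to), so the argument is sound as written.
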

\begin{proof}[Proof of Proposition~\ref{proposition:Rsym}]
Using $B_i(U)$ ($i=1,2,3$), we can rewrite as follows: 
\begin{align}
&4\,R(Y,\mathcal{U})J(U)U_x
\nonumber
\\
&=
2B_1(U)Y
+
2\left\{
R(Y,\mathcal{U})J(U)U_x+R(Y,U_x)J(U)\mathcal{U}
\right\}
\nonumber
\\
&=2B_1(U)Y+B_2(U)Y+B_3(U)Y
+R(Y,\mathcal{U})J(U)U_x-
R(Y,J(U)U_x)\mathcal{U}
\nonumber
\\
&\quad 
+
R(Y,U_x)J(U)\mathcal{U}
-R(Y,J(U)\mathcal{U})U_x. 
\label{eq:notee1}
\end{align}
Here, it follows that 
\begin{align}
&R(Y,\mathcal{U})J(U)U_x+R(Y,U_x)J(U)\mathcal{U}
\nonumber
\\
&=
-R(\mathcal{U},J(U)U_x)Y
-R(J(U)U_x,Y)\mathcal{U}
\nonumber
\\
&\quad
-R(U_x,J(U)\mathcal{U})Y
-R(J(U)\mathcal{U},Y)U_x
\quad
(\because \eqref{eq:R3})
\nonumber
\\
&=2\,R(J(U)\mathcal{U},U_x)Y
+R(Y,J(U)U_x)\mathcal{U}+R(Y,J(U)\mathcal{U})U_x.
\quad
(\because \eqref{eq:R1}, \eqref{eq:R6})
\label{eq:notee2}
\end{align} 
Combining \eqref{eq:notee1} and \eqref{eq:notee2}, we have 
\eqref{eq:ppp1}. 
In the same way, since 
$$
4\,R(Y,U_x)J(U)\mathcal{U}
=
-2B_1(U)Y
+
2\left\{
R(Y,\mathcal{U})J(U)U_x+R(Y,U_x)J(U)\mathcal{U}
\right\},
$$
we have 
\eqref{eq:ppp2}.
\end{proof} 
Next, we set 
\begin{equation}
T(U):=J(U)\p_x(A(U))(\cdot,U_x)
=\sum_k(P(U)\cdot,D_k(U)U_x)J(U)D_k(U)U_x,
\label{eq:T(U)}
\end{equation}
the exact form of which 
has been obtained by \eqref{eq:A2} in Proposition~\ref{proposition:sf}. 
We here observe the expression of $\p_x(T(U))$ defined by 
$\p_x(T(U))Y:=\p_x\{T(U)Y\}-T(U)\p_xY$. 
By the definition and the expression \eqref{eq:A2}, 
it is easy to find 
\begin{align}
\p_x(T(U))Y
&=
\sum_k
(P(U)Y,\p_x\{D_k(U)U_x \})J(U)D_k(U)U_x
\nonumber
\\
&\quad
+\sum_k
(P(U)Y,D_k(U)U_x ) \p_x\{J(U)D_k(U)U_x\}
\nonumber
\\
&\quad
+
\sum_{k}(\p_x(P(U))Y,D_k(U)U_x)J(U)D_k(U)U_x
\label{eq:TU1}
\end{align}
for any $Y:[-T,T]\times \TT\to \RR^d$. 
In fact, as an application of the tools stated above, 
we can obtain the following 
another expression.
\begin{proposition}
\label{proposition:2TU}
For any $Y:[-T,T]\times \TT\to \RR^d$, it follows that 
\begin{align}
\p_x(T(U))Y
&=
-R(J(U)\UU,U_x)Y
-\frac{1}{2}(B_2(U)+B_3(U))Y
\nonumber
\\
&\quad
-J(U)R(U_x,\p_x(N(U))Y)U_x
-A(U)(J(U)R(U_x,Y)U_x,U_x)
\nonumber
\\
&\quad
+\sum_k\p_x\left(
(U_x,D_k(U)U_x)J(U)D_k(U)P(U)
\right)Y.
\label{eq:ppp3}
\end{align}
\end{proposition}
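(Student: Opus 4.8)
The plan is to transform the starting expression \eqref{eq:TU1} into \eqref{eq:ppp3} by first recasting the operator $T(U)$ itself in terms of the curvature operator $R$. Writing $S(U)Y := \sum_k(U_x,D_k(U)U_x)J(U)D_k(U)P(U)Y$ for the operator inside the last line of \eqref{eq:ppp3}, I would first establish
\[
T(U)Y = J(U)R(U_x,Y)U_x + S(U)Y.
\]
This follows by inserting $Y_1=U_x$, $Y_2=Y$, $Y_3=U_x$ into the Gauss-type formula \eqref{eq:embcurvature}, using $P(U)U_x=U_x$ and $J(U)P(U)=J(U)$, and invoking the symmetry of each $D_k(U)$ on the tangent space $T_Uw(N)$ to rewrite $(U_x,D_k(U)P(U)Y)=(P(U)Y,D_k(U)U_x)$: the first sum then collapses to $T(U)Y$ by \eqref{eq:T(U)} while the second is exactly $S(U)Y$. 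Since $\p_x(S(U))Y=\sum_k\p_x\big((U_x,D_k(U)U_x)J(U)D_k(U)P(U)\big)Y$ reproduces the last line of \eqref{eq:ppp3} verbatim, it remains only to compute $\p_x(G(U))Y$ for $G(U)Y:=J(U)R(U_x,Y)U_x$.

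Next I would expand $\p_x(G(U))Y=\p_x(J(U))[R(U_x,Y)U_x]+J(U)\p_x\{R(U_x,Y)U_x\}-G(U)\p_xY$ by the Leibniz rule for $J(U)$. For the first summand, $R(U_x,Y)U_x$ is tangent, so its normal inner products vanish and the K\"ahler identity \eqref{eq:kaehler2} immediately yields $\p_x(J(U))[R(U_x,Y)U_x]=-A(U)(J(U)R(U_x,Y)U_x,U_x)$, the fourth term of \eqref{eq:ppp3}. For the second summand I would apply \eqref{eq:R7}, which is the point where $\nabla^NR^N=0$ enters, with $Y_1=Y_3=U_x$ and $Y_2=Y$. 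Here $\p_x\{P(U)U_x\}=U_{xx}$, and since $R$ projects its slots by \eqref{eq:RP} and $P(U)U_{xx}=\UU$ (because $\UU=U_{xx}+A(U)(U_x,U_x)$ by \eqref{eq:cox2}), the two outer terms become $R(\UU,Y)U_x$ and $R(U_x,Y)\UU$; the middle term, after writing $\p_x(P(U))=-\p_x(N(U))$, splits into $-J(U)R(U_x,\p_x(N(U))Y)U_x$ and $J(U)R(U_x,\p_xY)U_x=G(U)\p_xY$, the latter cancelling the $-G(U)\p_xY$ above. The $A(U)$-term produced by \eqref{eq:R7} is annihilated by $J(U)$.

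It then remains to recombine $J(U)R(\UU,Y)U_x+J(U)R(U_x,Y)\UU$ into curvature and $B_i$ terms. By \eqref{eq:R4} and \eqref{eq:R1} this block equals $-R(Y,\UU)J(U)U_x-R(Y,U_x)J(U)\UU$, and adding the two identities \eqref{eq:ppp1} and \eqref{eq:ppp2} of Proposition~\ref{proposition:Rsym} gives $R(Y,\UU)J(U)U_x+R(Y,U_x)J(U)\UU=R(J(U)\UU,U_x)Y+\tfrac12(B_2(U)+B_3(U))Y$, the $B_1(U)$ contributions cancelling. Hence the block equals $-R(J(U)\UU,U_x)Y-\tfrac12(B_2(U)+B_3(U))Y$, and collecting all the pieces produces \eqref{eq:ppp3}.

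I expect the main obstacle to be the careful bookkeeping of normal components in the second step: one must track exactly which second-fundamental-form contributions survive and which are killed by $J(U)$, and justify replacing $U_{xx}$ by $\UU$ inside $R$, which is legitimate only because of the projections built into \eqref{eq:RP} and \eqref{eq:embcurvature}. The recombination through Proposition~\ref{proposition:Rsym} is the conceptually delicate point, since it is there that the curvature symmetries force precisely the coefficient $\tfrac12$ on $B_2(U)$ and $B_3(U)$ together with the vanishing of $B_1(U)$; the rest is a routine, if lengthy, application of the Leibniz rule.
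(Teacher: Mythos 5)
Your proposal is correct and follows essentially the same route as the paper: first rewriting $T(U)Y=J(U)R(U_x,Y)U_x+\sum_k(U_x,D_k(U)U_x)J(U)D_k(U)P(U)Y$ via \eqref{eq:embcurvature}, then differentiating the curvature part with \eqref{eq:kaehler2} and \eqref{eq:R7} (with the same slot choices $Y_1=Y_3=U_x$, $Y_2=Y$), and finally recombining through \eqref{eq:ppp1}--\eqref{eq:ppp2} so that the $B_1(U)$ contributions cancel. All the key cancellations you identify (the $J(U)A(U)$ terms, the $R(\cdot,N(U)\cdot)=0$ term, and the replacement of $U_{xx}$ by $\UU$ inside $R$) are exactly the ones used in the paper's proof.
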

\begin{proof}[Proof of Proposition~\ref{proposition:2TU}]
Using $J(U)=J(U)P(U)$, the symmetry of $D_k(U)$, 
and \eqref{eq:embcurvature} with 
$Y_1=Y_3=U_x=P(U)U_x$ and $Y_2=Y$, we can deduce 
\begin{align}
T(U)Y
&=
J(U)\sum_k
(P(U)Y,D_k(U)U_x)P(U)D_k(U)U_x
\nonumber
\\
&=
J(U)\sum_k
(U_x, D_k(U)P(U)Y)P(U)D_k(U)U_x
\nonumber
\\
&=
J(U)
\left\{
R(U_x,Y)U_x
+
\sum_k
(U_x, D_k(U)U_x)P(U)D_k(U)P(U)Y
\right\}
\nonumber
\\
&=
J(U)R(U_x,Y)U_x
+
\sum_k
(U_x, D_k(U)U_x)J(U)D_k(U)P(U)Y.
\label{eq:he}
\end{align}
By computing 
$\p_x(T(U))Y=\p_x\{T(U)Y\}-T(U)\p_xY$ 
under the expression, 
we can obtain \eqref{eq:ppp3}. 
We demonstrate only the computation related to the first term 
of the right hand side of \eqref{eq:he} here.
To begin with, it follows that
\begin{align}
&
\p_x\{J(U)R(U_x,Y)U_x\}-J(U)R(U_x,\p_xY)U_x
\nonumber
\\
&=
\p_x(J(U))R(U_x,Y)U_x
+
J(U)\p_x\{
R(U_x,Y)U_x
\}
-J(U)R(U_x,\p_xY)U_x.
\label{eq:TU3}
\end{align}
Using \eqref{eq:kaehler2} with $Y=R(U_x,Y)U_x$, 
and noting 
$R(U_x,Y)U_x\perp \nu_k(U)$, 
we see
\begin{align}
\p_x(J(U))R(U_x,Y)U_x
&=
-A(U)(J(U)R(U_x,Y)U_x,U_x).
\label{eq:TU4}
\end{align}
Using \eqref{eq:R7} with $Y_1=Y_3=U_x=P(U)U_x$ 
and $Y_2=Y$, 
we deduce
\begin{align}
(\sharp)
&:=J(U)\p_x\{
R(U_x,Y)U_x
\}
-J(U)R(U_x,\p_xY)U_x
\nonumber
\\
&=
J(U)R(U_{xx},Y)U_x
+J(U)R(U_x,\p_x\{P(U)Y\})U_x
+J(U)R(U_x,Y)U_{xx}
\nonumber
\\
&\quad 
-J(U)A(U)(R(U_x,Y)U_x,U_x)-J(U)R(U_x,\p_xY)U_x
\nonumber
\\
&=
J(U)R(U_{xx},Y)U_x
+J(U)R(U_x,Y)U_{xx}
+J(U)R(U_x,\p_x(P(U))Y)U_x
\nonumber
\\
&\quad 
-J(U)R(U_x,N(U)\p_xY)U_x
-J(U)A(U)(R(U_x,Y)U_x,U_x),
\nonumber
\end{align}
where in the final equality we use $P(U)+N(U)=I_d$.
By \eqref{eq:cox2}, 
we see $\UU=U_{xx}+A(U)(U_x,U_x)$. 
By \eqref{eq:J1}, 
we see $J(U)A(U)(\cdot,\cdot)=0$.
By \eqref{eq:RP} and $P(U)N(U)=0$, 
$R(\cdot, N(U)\cdot)=0$.  
By $P(U)+N(U)=I_d$, we see $\p_x(P(U))=-\p_x(N(U))$.
Using them, and using \eqref{eq:R1} and \eqref{eq:R4}, 
we deduce 
\begin{align}
(\sharp)
&=
J(U)R(\UU,Y)U_x
+J(U)R(U_x,Y)\UU
-
J(U)R(U_x, \p_x(N(U))Y)U_x
\nonumber
\\
&=
-R(Y,\UU)J(U)U_x
-R(Y,U_x)J(U)\UU
-
J(U)R(U_x, \p_x(N(U))Y)U_x.
\nonumber
\end{align}
Furthermore, applying \eqref{eq:ppp1}-\eqref{eq:ppp2} in 
Proposition~\ref{proposition:Rsym}, 
we have 
\begin{align}
(\sharp)
&=
-R(J(U)\UU,U_x)Y
-\frac{1}{2}(B_2(U)+B_3(U))Y
-
J(U)R(U_x, \p_x(N(U))Y)U_x.
\label{eq:TU5}
\end{align}
Substituting \eqref{eq:TU4} and \eqref{eq:TU5} into 
\eqref{eq:TU3}, we obtain the part of the summation of the 
first four terms of the right hand side of \eqref{eq:ppp3}.
\end{proof}
\begin{remark} 
For any $Y:[-T,T]\times \TT\to \RR^d$, 
since $N(U)Y=\sum_k(Y,\nu_k(U))\nu_k(U)$ and 
$A(U)(Y,U_x)=A(U)(P(U)Y,U_x)$, 
we find  
\begin{align}
&\p_x(N(U))Y
(=-\p_x(P(U))Y)
\nonumber
\\
&=
\sum_{\ell}(Y,D_{\ell}(U)U_x)\nu_{\ell}(U)
+
\sum_{\ell}(Y,\nu_{\ell}(U))D_{\ell}(U)U_x
\nonumber
\\
&=
A(U)(Y,U_x)
+\sum_{\ell}(N(U)Y,D_{\ell}(U)U_x)\nu_{\ell}(U)
+\sum_{\ell}(Y,\nu_{\ell}(U))D_{\ell}(U)U_x. 
\label{eq:PN}
\end{align}
\end{remark}
\begin{definition}
\label{definition:spm}
We define operators $S_{+}(U)$ and $S_{-}(U)$ by 
\begin{equation}
S_{\pm}(U)Y
:=\sum_k(U_x,D_k(U)U_x)
P(U)(J(U)D_k(U) \pm D_k(U)J(U))P(U)Y
\label{eq:spm}
\end{equation}
for any $Y:[-T,T]\times \TT\to \RR^d$.
\end{definition}
By \eqref{eq:J(wp)1} and the symmetry of $D_k(U)$, 
it is immediate to see 
$(S_{\pm}(U)Y_1,Y_2)=\mp(Y_1,S_{\pm}Y_2)$
for any $Y_1,Y_2:[-T,T]\times \TT\to \RR^d$. 
In our proof of Theorem\ref{theorem:uniqueness}, 
the skew-symmetry of $S_{+}$ 
and the symmetry of $S_{-}$  will play the crucial role.
Furthermore, let us here look at the structure of 
$S_{\pm}(U)$ in terms of 
$R$, $T(U)$, and the adjoint $(T(U))^{\ast}$ 
which is obviously given by 
\begin{equation}
(T(U))^{\ast}=\sum_k(\cdot,J(U)D_k(U)U_x)P(U)D_k(U)U_x.
\label{eq:Tstar}
\end{equation}
The observation will make the structure of 
the equation satisfied by $\WW$ clear.
For this purpose, let $Y:[-T,T]\times \TT\to \RR^d$ be any given. 
Applying \eqref{eq:embcurvature} 
with $Y_1=Y$, $Y_2=Y_3=U_x$, and using 
\eqref{eq:R4} and the symmetry of $D_k(U)$, we have
\begin{align}
&\sum_k(U_x,D_k(U)U_x)
P(U)J(U)D_k(U)P(U)Y
\nonumber
\\
&=J(U)\sum_k(U_x,D_k(U)U_x)
P(U)D_k(U)P(U)Y
\nonumber
\\
&=
J(U)R(Y,U_x)U_x+J(U)\sum_k(U_x,D_k(U)P(U)Y)
P(U)D_k(U)U_x
\nonumber
\\
&=
R(Y,U_x)J(U)U_x
+\sum_k(P(U)Y,D_k(U)U_x)
J(U)D_k(U)U_x
\nonumber
\\
&=
R(Y,U_x)J(U)U_x+T(U)Y. 
\label{eq:131}
\end{align}
In the same way as above, applying 
\eqref{eq:embcurvature} with 
$Y_1=J(U)Y$, $Y_2=Y_3=U_x$, and using 
\eqref{eq:R6}, the symmetry of $D_k(U)$ 
and \eqref{eq:J(wp)1}, we deduce  
\begin{align}
&\sum_k(U_x,D_k(U)U_x)
P(U)D_k(U)J(U)P(U)Y
\nonumber
\\
&=\sum_k(U_x,D_k(U)U_x)
P(U)D_k(U)P(U)J(U)Y
\nonumber
\\
&=
R(J(U)Y,U_x)U_x
+\sum_k(U_x,D_k(U)J(U)Y)
P(U)D_k(U)U_x
\nonumber
\\
&=-R(Y,J(U)U_x)U_x
-\sum_k(Y, J(U)D_k(U)U_x)
P(U)D_k(U)U_x
\nonumber
\\
&=
-R(Y,J(U)U_x)U_x-(T(U))^{\ast}Y. 
\label{eq:231}
\end{align}
Combining \eqref{eq:131} and \eqref{eq:231}, 
and using  
\eqref{eq:R1} and \eqref{eq:R3}, we find 
\begin{align}
S_{+}(U)
&=
T(U)-(T(U))^{\ast}
+R(\cdot,U_x)J(U)U_x-R(\cdot,J(U)U_x)U_x
\nonumber
\\&=
T(U)-(T(U))^{\ast}
+R(J(U)U_x,U_x), 
\label{eq:331}
\\
S_{-}(U)&=
T(U)+(T(U))^{\ast}
+R(\cdot,U_x)J(U)U_x
+R(\cdot,J(U)U_x)U_x.
\label{eq:431}
\end{align}
It is obvious to see both $T(U)-(T(U))^{\ast}$
and $R(J(U)U_x,U_x)$ are skew-symmetric and 
both $T(U)+(T(U))^{\ast}$
and 
$R(\cdot,U_x)J(U)U_x+R(\cdot,J(U)U_x)U_x$ are symmetric. 
We can observe the skew-symmetry of $S_{+}(U)$ and 
symmetry of $S_{-}(U)$ also from them.
\par
%
%
\section{Equations satisfied by $\UU$ and $\WW$, and 
the energy estimate in $L^2$}
\label{section:cl}
Let $u,v$ be solutions to \eqref{eq:pde}-\eqref{eq:data} 
constructed in Theorem~\ref{theorem:existence}.
Then 
$u$ and $v$ satisfy  
$u_x, v_x 
\in L^{\infty}(-T,T;H^5(\TT;TN))
\cap
C([-T,T];H^{4}(\TT;TN))$. 
We fix $w$ as an isometric embedding of $(N,J,h)$ into 
some Euclidean space $\RR^d$. 
We set 
$U:=w{\circ}u$, 
$V:=w{\circ}v$, 
$Z:=U-V$, 
$\UU:=dw_u(\nabla_xu_x)$, 
$\VV:=dw_v(\nabla_xv_x)$, 
and $\WW:=\UU-\VV$. 
The goal of this section is to derive the equation 
satisfied by $\WW$ and 
the classical estimate for $\WW$ in $L^2(\TT;\RR^d)$ 
permitting loss of derivatives.   
\par 
Before starting them, some comments are in order: 
The equivalence between $u_x, v_x 
\in L^{\infty}(-T,T;H^5(\TT;TN))
\cap
C([-T,T];H^{4}(\TT;TN))$
and 
$U_x, V_x 
\in L^{\infty}(-T,T;H^5(\TT;\RR^d))
\cap
C([-T,T];H^{4}(\TT;\RR^d))$ 
follows from the Gagliardo-Nirenberg inequality.  
Therefore we see 
$$
\UU,\VV,\WW\in L^{\infty}(-T,T;H^4(\TT;\RR^d)) 
\cap C([-T,T];H^{3}(\TT;\RR^d)).$$
Particularly from the Sobolev embedding $H^1(\TT;\RR^d)\subset 
C(\TT;\RR^d)$, we see   
$\p_x^{\ell}U_x, \p_x^{\ell}V_x\in 
C([-T,T]\times \TT;\RR^d)$ 
for $\ell=0,1,\ldots,3$
and $\p_x^{\ell}\UU, \p_x^{\ell}\VV\in 
C([-T,T]\times \TT;\RR^d)$ 
for $\ell=0,1,\ldots,2$. 
They will be used below without any comments.
In addition, 
we will sometimes write $dw$ instead of $dw_u$ or $dw_v$ 
below for simplicity.
\subsection{
Equation
satisfied by $\UU$} 
\label{subsection:UU}
In this subsection, we compute the equation 
satisfied by $\UU$. 
The goal is to derive the following: 
\begin{proposition}
\label{proposition:UU}
Under the setting as above, we have 
\begin{align}
\p_t\UU=L(U)\UU
+\sum_k\mathcal{O}(|\p_x^2\UU|+|\p_x\UU|)\nu_k(U)
+\mathcal{O}(|U|+|U_x|+|\UU|),
\label{eq:UU}
\end{align} 
where $L(U)$ is the partial differential operator which is given by
\begin{align}
L(U)Y
&=
a\p_x^2\{J(U)\p_x^2Y \}
+2aA(U)(J(U)\p_x^3Y,U_x)
+\lambda\p_x\{J(U)\p_xY\}
\nonumber
\\
&\quad
+
(-a+b)\,R(\p_x^2Y,J(U)U_x)U_x
+(-a+b+c)\,\p_x\left\{
R(J(U)U_x,U_x)\p_xY
\right\}
\nonumber
\\
&\quad 
+\left(-\frac{a}{2}-\frac{b}{2}+3c\right)\,R(J(U)\UU,U_x)\p_xY
+\sum_{i=1}^3c_iB_i(U)\p_xY
\nonumber
\\
&\quad
+a\p_x\{S_{+}(U)\p_xY \}
+a\p_x(S_{-}(U))\p_xY
\nonumber
\\
&\quad
+2a\sum_k(\UU,D_k(U)U_x)J(U)D_k(U)\p_xY
\label{eq:UU2}
\end{align}
for any $Y:[-T,T]\times \TT\to \RR^d$.
Here, $c_1, c_2, c_3\in \RR$ are constants depending on 
$a,b,c$, and $B_i(U)$ for $i=1,2,3$ have been defined 
in Definition~\ref{definition:B_i}, and $S_{+}(U)$ and $S_{-}(U)$ 
have been defined in Definition~\ref{definition:spm} or 
\eqref{eq:331}-\eqref{eq:431}.
\end{proposition}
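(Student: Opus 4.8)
The plan is to derive the intrinsic evolution equation for $\nabla_x u_x$ and then push it forward through $dw_u$ into $\RR^d$, tracking carefully the correction terms produced by the non-commutativity of $\p_x$ with $dw_u\circ\nabla_x$ and with $J(U)$. First I would record, via \eqref{eq:cox2} applied to $Y=u_x$, that $\UU=dw_u(\nabla_x u_x)=U_{xx}+A(U)(U_x,U_x)$, and, via \eqref{eq:cot2} applied to $Y=\nabla_x u_x$, that $\p_t\UU=dw_u(\nabla_t\nabla_x u_x)-A(U)(\UU,U_t)$. Since $U_t=dw_u(u_t)$ is tangential and, by \eqref{eq:pde}, of order at most two in $\UU$ (its leading part being $aJ(U)\p_x^2\UU$), the term $-A(U)(\UU,U_t)$ is normal-valued and fits into the collected family $\sum_k\mathcal{O}(|\p_x^2\UU|+|\p_x\UU|)\nu_k(U)$. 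Thus the entire tangential structure of \eqref{eq:UU2} must come from $dw_u(\nabla_t\nabla_x u_x)$.

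For that quantity I would use torsion-freeness $\nabla_t u_x=\nabla_x u_t$ together with the curvature commutation relation, giving $\nabla_t\nabla_x u_x=\nabla_x^2 u_t+R^N(u_t,u_x)u_x$. Substituting \eqref{eq:pde} and expanding $\nabla_x^2 u_t$ intrinsically---moving $\nabla_x$ freely through $J_u$ and distributing it across $R^N$ by Proposition~\ref{proposition:R^N}(iv),(vii), which is where $\nabla^N J=\nabla^N R^N=0$ enters---produces a finite intrinsic sum whose leading term is $aJ_u\nabla_x^4(\nabla_x u_x)$, while the commutator curvature $R^N(u_t,u_x)u_x$ contributes the leading piece $aR^N(J_u\nabla_x^3 u_x,u_x)u_x$. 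This mirrors the computation behind \eqref{eq:esspde}, which I would keep as a consistency check on the orders and signs.

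The transfer step is the computational heart. Term by term I would use $dw_u(J_u\nabla_x^m(\nabla_x u_x))=J(U)\,dw_u(\nabla_x^m(\nabla_x u_x))$ and iterate \eqref{eq:cox2} to write $dw_u(\nabla_x^m(\nabla_x u_x))=\p_x^m\UU+(\text{lower-order second-fundamental-form corrections})$, while the commutator curvature transfers through $R$ of Definition~\ref{definition:newdef} via \eqref{eq:mm1} as $R(U_t,U_x)U_x$. The crucial reorganization is to convert $aJ(U)\p_x^4\UU$ into the divergence form $a\p_x^2\{J(U)\p_x^2\UU\}$; the difference is a commutator governed by $\p_x^m(J(U))$, which by the K\"ahler identity \eqref{eq:kaehler2} yields exactly the normal term $2aA(U)(J(U)\p_x^3\UU,U_x)$ together with lower-order $A(U)$- and $D_k(U)$-contributions (handled by Propositions~\ref{proposition:sf0} and \ref{proposition:sf}). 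All remaining curvature expressions are then manipulated with the identities \eqref{eq:R1}--\eqref{eq:R7}.

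The main obstacle is the exact bookkeeping of the order-three and order-two contributions. I expect that, apart from the two genuine loss-of-derivative terms $R(\p_x^2\UU,J(U)U_x)U_x$ and $R(J(U)\UU,U_x)\p_x\UU$, every derivative-losing term must either cancel, fall into the normal bundle, or consolidate into the skew/symmetric operators $S_\pm(U)$ and $T(U)$, whose definite symmetry is precisely what Propositions~\ref{proposition:2TU} and \ref{proposition:Rsym} and the relations \eqref{eq:331}--\eqref{eq:431} are designed to expose, with the leftover curvature remainders gathered into the $B_i(U)$. Pinning down the precise coefficients $-a+b$, $-a+b+c$, $-\tfrac a2-\tfrac b2+3c$, the constants $c_i$, and the factor $2a$, and verifying that nothing of higher order survives outside the displayed normal terms and the $\mathcal{O}(|U|+|U_x|+|\UU|)$ remainder, is the delicate part of the calculation.
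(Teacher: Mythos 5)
Your strategy coincides with the paper's in every essential respect: you decompose $\p_t\UU$ via \eqref{eq:cot2} and the commutation identity $\nabla_t\nabla_xu_x=\nabla_x^2u_t+R^N(u_t,u_x)u_x$, you correctly identify $-A(U)(\UU,U_t)$ as a harmless normal term, you transfer the curvature commutator through $R$ via \eqref{eq:mm1} to produce the $(-a)$ part of the coefficient of $R(\p_x^2\UU,J(U)U_x)U_x$, and you invoke exactly the right toolkit (Propositions~\ref{proposition:sf0}, \ref{proposition:sf}, \ref{proposition:Rsym}, \ref{proposition:2TU} and the operators $T(U)$, $S_{\pm}(U)$, $B_i(U)$) to absorb the order-two and order-one leftovers. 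The one cosmetic difference is in the principal term: the paper never forms $aJ(U)\p_x^4\UU$ and then reorganizes it; it applies \eqref{eq:cox2} twice to the outer $\nabla_x^2$ in $dw(\nabla_x^2J_u\nabla_x^3u_x)$, so that the divergence form $a\p_x^2\{J(U)\p_x^2\UU\}$ and the normal term $2aA(U)(J(U)\p_x^3\UU,U_x)$ both appear directly, each application of \eqref{eq:cox2} contributing one copy of $A(U)(\cdot,U_x)$. Your route (full intrinsic expansion to $aJ_u\nabla_x^5u_x$, transfer, then commute $J(U)$ back inside $\p_x^2$ using \eqref{eq:kaehler2}) reaches the same place with slightly different bookkeeping, and is workable.

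The genuine shortfall is that the proposal is a plan, not a proof. For this proposition the computation \emph{is} the content: the statement asserts specific coefficients, and your final paragraph explicitly defers their determination (``I expect that \dots'', ``pinning down the precise coefficients \dots is the delicate part''). This matters because the two coefficients $-a+b$ and $-\frac{a}{2}-\frac{b}{2}+3c$ in front of the derivative-losing terms are exactly what fixes $e_1$ and $e_2$ in the gauge correction $\Lambda$ of Section~\ref{section:proof}; if they came out differently the cancellation in \eqref{eq:mmk} would fail and the uniqueness argument would collapse. Likewise, the claims that every other second- and third-order tangential remainder either cancels, lands in the normal bundle, or consolidates into $\p_x\{S_+(U)\p_x\cdot\}$, $\p_x(S_-(U))\p_x\cdot$, and the $B_i(U)$ are precisely the nontrivial assertions of the proposition; in the paper they occupy the entire chain \eqref{eq:W_1}--\eqref{eq:y5}, including the nonobvious cancellation of $2G_1^I$ against a piece of $2G_2^I+2G_3^I$ and the identification \eqref{eq:66te}--\eqref{eq:67te} via Proposition~\ref{proposition:2TU}. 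Until you carry out that bookkeeping, the proposition is not established.
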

To prove this, lengthy and delicate computations 
will be required. 
For readers who want to know
how Proposition~\ref{proposition:UU} is applied in advance,
it is also recommendable to skip to Subsection~\ref{subsection:energy}. 
\begin{proof}[Proof of Proposition~\ref{proposition:UU}]
To begin with, we compute the equation satisfied by $U$. 
Using \eqref{eq:pde} and \eqref{eq:mm1}, 
we deduce  
\begin{align}
U_t
&=
dw(u_t)
\nonumber
\\
&=
a\,dw(\nabla_xJ_u\nabla_x^2u_x)
+
\lambda\,dw(J_u\nabla_xu_x)
\nonumber
\\
&\quad
+
b\,dw(R^N(\nabla_xu_x,u_x)J_uu_x)
+
c\,dw(R^N(J_uu_x,u_x)\nabla_xu_x)
\nonumber
\\
&=
a\,dw(\nabla_xJ_u\nabla_x^2u_x)
+
\lambda\,J(U)dw(\nabla_xu_x)
\nonumber
\\
&\quad 
+
b\, R(dw(\nabla_xu_x),dw(u_x))dw(J_uu_x)
\nonumber
\\
&\quad 
+
c\, R(dw(J_uu_x),dw(u_x))dw(\nabla_xu_x)
\nonumber
\\
&=
a\,dw(\nabla_xJ_u\nabla_x^2u_x)
+
\lambda\,
J(U)\UU 
+
b\,R(\UU,U_x)J(U)U_x
+
c\,R(J(U)U_x,U_x)\UU.
\label{eq:u1}
\end{align}
In the computation, we also repeatedly use the fact 
$dw(J_u\Xi)=J(U)dw(\Xi)$ holds for any 
$\Xi\in \Gamma(u^{-1}TN)$. 
Note also that, for the first term of the right hand side,  
we use the fact  $J_u\nabla_x^3u_x=\nabla_xJ_u\nabla_x^2u_x$ 
which comes from the K\"ahler condition $\nabla^N J=0$. 
Furthermore, 
using \eqref{eq:cox2} and $J(U)A(U)(\cdot,\cdot)=0$ 
which comes from \eqref{eq:J1}, 
we see
\begin{align}
dw(\nabla_x^2u_x)
&=
\p_x\left\{
dw(\nabla_xu_x)
\right\}
+
A(U)(dw(\nabla_xu_x),U_x)
\nonumber
\\
&
=
\p_x\UU
+
A(U)(\UU,U_x),
\label{eq:u2}
\\
J(U)dw(\nabla_x^2u_x)
&=
J(U)\p_x\UU
+
J(U)A(U)(\UU,U_x)
=
J(U)\p_x\UU, 
\label{eq:u3}
\\
dw(\nabla_xJ_u\nabla_x^2u_x)
&=
\p_x
\left\{
dw(J_u\nabla_x^2u_x)
\right\}
+
A(U)(dw(J_u\nabla_x^2u_x),U_x)
\nonumber
\\
&=
\p_x\left\{
J(U)dw(\nabla_x^2u_x)
\right\}
+
A(U)(J(U)dw(\nabla_x^2u_x), U_x)
\nonumber
\\
&=
\p_x\left\{
J(U)\p_x\UU
\right\}
+
A(U)(J(U)\p_x\UU,U_x). 
\label{eq:u4}
\end{align}
Substituting \eqref{eq:u4} into \eqref{eq:u1}, 
we have 
\begin{align}
U_t
&=
a\,\p_x\left\{
J(U)\p_x\UU
\right\}
+
a\,A(U)(J(U)\p_x\UU,U_x)
+
\mathcal{O}(|U|+|U_x|+|\UU|).
\label{eq:U_t}
\end{align}
On the other hand,
we notice $dw(\nabla_xJ_u\nabla_x^2u_x)=dw(J_u\nabla_x^3u_x)
=J(U)dw(\nabla_x^3u_x)$ 
also follows from the K\"ahler condition $\nabla^N J=0$. 
Using \eqref{eq:cox2} and \eqref{eq:u2}, we obtain 
\begin{align}
dw(\nabla_x^3u_x)
&=
\p_x\left\{
dw(\p_x^2u_x)
\right\}
+
A(U)(dw(\p_x^2u_x),U_x)
\nonumber
\\
&=
\p_x^2\UU
+\p_x\left\{
A(U)(\UU,U_x)
\right\}
+A(U)(\p_x\UU+A(U)(\UU,U_x),U_x)
\nonumber
\\
&=
\p_x^2\UU
+\p_x(A(U))(\UU,U_x)
+2A(U)(\p_x\UU,U_x)
\nonumber
\\
&\quad 
+A(U)(\UU,U_{xx})+A(U)(A(U)(\UU,U_x),U_x).
\label{eq:3mmm}
\end{align}
Hence, using $J(U)A(U)(\cdot,\cdot)=0$ again, we get 
\begin{align}
dw(\nabla_xJ_u\nabla_x^2u_x)
&=
J(U)\p_x^2\UU
+J(U)\p_x(A(U))(\UU,U_x). 
\label{eq:3mm}
\end{align}
Therefore, we notice \eqref{eq:U_t} can be rewritten as 
\begin{equation}
U_t
=a\,J(U)\p_x^2\UU
+\mathcal{O}(|U|+|U_x|+|\UU|).
\label{eq:U_t2}
\end{equation}
\par 
Next, we compute the equation  
satisfied by $\UU$. 
From 
\eqref{eq:pde}, 
\eqref{eq:mm1}, 
and
\eqref{eq:cot2}, 
it follows that 
\begin{align}
\p_t\UU
&=
\p_t(dw(\nabla_xu_x))
\nonumber
\\
&=
dw(\nabla_t\nabla_xu_x)
-
A(U)(dw(\nabla_xu_x),U_t)
\nonumber
\\
&=
dw(\nabla_x^2u_t+R^N(u_t,u_x)u_x)
-
A(U)(\UU,U_t)
\nonumber
\\
&=
dw(\nabla_x^2u_t)
+
R(U_t,U_x)U_x
-
A(U)(\UU,U_t)
\nonumber
\\
&=
a\, dw(\nabla_x^2J_u\nabla_x^3u_x)
+
\lambda\,
dw(\nabla_x^2J_u\nabla_xu_x)
+F_R+
R(U_t,U_x)U_x
-
A(U)(\UU,U_t), 
\label{eq:mUU}
\\
F_R
&:=dw
 \left(
 b\nabla_x^2\left\{
 R^N(\nabla_xu_x,u_x)J_uu_x
 \right\}
 +
 c\,
 \nabla_x^2\left\{
 R^N(J_uu_x,u_x)\nabla_xu_x
 \right\}
 \right).
 \nonumber
\end{align}
We compute each term of the right hand 
side of \eqref{eq:mUU}.
Starting from 
substituting \eqref{eq:U_t2} and using \eqref{eq:R6}, 
we have 
\begin{align}
R(U_t,U_x)U_x
&=
a\,R(J(U)\p_x^2\UU,U_x)U_x
+
\mathcal{O}(|U|+|U_x|+|\UU|)
\nonumber
\\
&=
-a\,R(\p_x^2\UU,J(U)U_x)U_x
+
\mathcal{O}(|U|+|U_x|+|\UU|),
\label{eq:II}
\\
A(U)(\UU,U_t)
&=
a\,A(U)(\UU, J(U)\p_x^2\UU)
+\mathcal{O}(|U|+|U_x|+|\UU|).
\label{eq:III}
\end{align}
Using \eqref{eq:u4} and the K\"ahler condition 
$\nabla^NJ=0$, we immediately see  
\begin{align}
\lambda\,
dw(\nabla_x^2J_u\nabla_xu_x)
&=
\lambda\,
\p_x\left\{
J(U)\p_x\UU
\right\}
+
\lambda\,
A(U)(J(U)\p_x\UU,U_x).
\label{eq:I_2} 
\end{align}
Concerning the term $F_R$, 
we compute using Proposition~\ref{proposition:R^N}. 
Indeed, 
using  (i) and (vii)  in Proposition~\ref{proposition:R^N}, 
we deduce 
\begin{align}
F_{R1}
&
:=\nabla_x^2\left\{
R^N(\nabla_xu_x,u_x)J_uu_x
\right\}
\nonumber
\\
&=
R^N(\nabla_x^3u_x,u_x)J_uu_x
+R^N(\nabla_xu_x,\nabla_x^2u_x)J_uu_x
\nonumber
\\
&\quad 
+R^N(\nabla_xu_x,u_x)J_u\nabla_x^2u_x
+2R^N(\nabla_x^2u_x,\nabla_xu_x)J_uu_x
\nonumber
\\
&\quad
+2R^N(\nabla_x^2u_x,u_x)J_u\nabla_xu_x
+2R^N(\nabla_xu_x,\nabla_x u_x)J_u\nabla_xu_x
\nonumber
\\
&=
R^N(\nabla_x^3u_x,u_x)J_uu_x
+R^N(\nabla_x^2u_x,\nabla_xu_x)J_uu_x
\nonumber
\\
&\quad 
+2R^N(\nabla_x^2u_x,u_x)J_u\nabla_xu_x
+R^N(\nabla_xu_x,u_x)J_u\nabla_x^2u_x.
\nonumber
\end{align}
Then, using (iii) in Proposition~\ref{proposition:R^N}, 
we have 
\begin{align}
F_{R1}
&=-R^N(u_x,J_uu_x)\nabla_x^3u_x
-R^N(J_uu_x,\nabla_x^3u_x)u_x
-R^N(\nabla_xu_x,J_uu_x)\nabla_x^2u_x
\nonumber
\\
&\quad
-R^N(J_uu_x,\nabla_x^2u_x)\nabla_xu_x
-2R^N(u_x,J_u\nabla_xu_x)\nabla_x^2u_x
\nonumber
\\
&\quad 
-2R^N(J_u\nabla_xu_x,\nabla_x^2u_x)u_x
-R^N(u_x,J_u\nabla_x^2u_x)\nabla_xu_x
-R^N(J_u\nabla_x^2u_x,\nabla_xu_x)u_x.
\nonumber
\end{align}
Furthermore, using (i) and (vi) in Proposition~\ref{proposition:R^N},  
we get 
\begin{align}
F_{R1}
&=
R^N(\nabla_x^3u_x,J_uu_x)u_x
+R^N(J_uu_x,u_x)\nabla_x^3u_x
\nonumber
\\
&\quad 
-3R^N(\nabla_xu_x,J_uu_x)\nabla_x^2u_x
+3R^N(\nabla_x^2u_x,J_u\nabla_xu_x)u_x.
\nonumber
\end{align}
In the same way, 
using  (vii)  in Proposition~\ref{proposition:R^N},
we deduce 
\begin{align}
F_{R2}
&
:=\nabla_x^2\left\{
R^N(J_uu_x,u_x)\nabla_xu_x
\right\}
\nonumber
\\
&=
R^N(J_u\nabla_x^2u_x,u_x)\nabla_xu_x
+R^N(J_uu_x, \nabla_x^2u_x)\nabla_xu_x
\nonumber
\\
&\quad
+2R^N(J_u\nabla_xu_x,\nabla_xu_x)\nabla_xu_x
+2R^N(J_u\nabla_xu_x,u_x)\nabla_x^2u_x
\nonumber
\\
&\quad
+2R^N(J_uu_x,\nabla_xu_x)\nabla_x^2u_x
+R^N(J_uu_x,u_x)\nabla_x^3u_x.
\nonumber
\end{align}
Then, using (i) and (vi)  in Proposition~\ref{proposition:R^N}, 
we have
\begin{align}
F_{R2}
&=
R^N(J_uu_x,u_x)\nabla_x^3u_x
-4R^N(\nabla_xu_x,J_uu_x)\nabla_x^2u_x
\nonumber
\\
&\quad 
-2R^N(\nabla_x^2u_x,J_uu_x)\nabla_xu_x
+2R^N(J_u\nabla_xu_x,\nabla_xu_x)\nabla_xu_x.
\nonumber
\end{align}
Substituting them into $F_R=dw(bF_{R1}+cF_{R2})$, we deduce 
\begin{align}
F_R&=
dw\Bigl(
b\,R^N(\nabla_x^3u_x,J_uu_x)u_x
+(b+c)\,R^N(J_uu_x,u_x)\nabla_x^3u_x
\nonumber
\\
&\quad 
-(3b+4c)\,R^N(\nabla_xu_x,J_uu_x)\nabla_x^2u_x
+3b\,R^N(\nabla_x^2u_x,J_u\nabla_xu_x)u_x
\nonumber
\\
&\quad 
-2c\,R^N(\nabla_x^2u_x,J_uu_x)\nabla_xu_x
+2c\,R^N(J_u\nabla_xu_x,\nabla_xu_x)\nabla_xu_x
\Bigr)
\nonumber
\\
&=
b\,R(dw( \nabla_x^3u_x),J(U)U_x)U_x
+(b+c)\,R(J(U)U_x,U_x)dw( \nabla_x^3u_x)
\nonumber
\\
&\quad 
-(3b+4c)\,R(\UU,J(U)U_x)dw(\nabla_x^2u_x)
+3b\,R(dw(\nabla_x^2u_x),J(U)\UU)U_x
\nonumber
\\
&\quad 
-2c\,R(dw(\nabla_x^2u_x),J(U)U_x)\UU
+2c\,R(J(U)\UU,\UU)\UU.
\nonumber
\end{align}
Substituting \eqref{eq:u2} and \eqref{eq:3mmm} into above, 
and noting \eqref{eq:RA} holds, 
we obtain 
\begin{align}
F_R 
&=
b\,R(\p_x^2\UU,J(U)U_x)U_x
+(b+c)\,R(J(U)U_x,U_x)\p_x^2\UU
\nonumber
\\
&\quad
+2b\,R(A(U)(\p_x\UU,U_x),J(U)U_x)U_x
+2(b+c)\,R(J(U)U_x,U_x)A(U)(\p_x\UU,U_x)
\nonumber
\\
&\quad 
-(3b+4c)\,R(\UU,J(U)U_x)\p_x\UU
+3b\,R(\p_x\UU,J(U)\UU)U_x
\nonumber
\\
&\quad 
-2c\,R(\p_x\UU,J(U)U_x)\UU
+\mathcal{O}(|U|+|U_x|+| \UU |)
\nonumber
\\
&=
b\,R(\p_x^2\UU,J(U)U_x)U_x
+(b+c)\,R(J(U)U_x,U_x)\p_x^2\UU
\nonumber
\\
&\quad 
-(3b+4c)\,R(\UU,J(U)U_x)\p_x\UU
+3b\,R(\p_x\UU,J(U)\UU)U_x
\nonumber
\\
&\quad 
-2c\,R(\p_x\UU,J(U)U_x)\UU
+\mathcal{O}(|U|+|U_x|+| \UU |).
\label{eq:mI_3}
\end{align}
Furthermore,  using \eqref{eq:R7} 
with $Y_1=J(U)U_x$, $Y_2=U_x$, $Y_3=\p_x\UU$, 
we have 
\begin{align}
&\p_x
\left\{
R(J(U)U_x,U_x)\p_x\UU
\right\}
\nonumber
\\
&=
R(\p_x\left\{J(U)U_x\right\},U_x)\p_x\UU 
+R(J(U)U_x,U_{xx})\p_x\UU
\nonumber
\\
&\quad
+R(J(U)U_x,U_x)\p_x\{
P(U)\p_x\UU\}
-A(U)(R(J(U)U_x,U_x)\p_x\UU, U_x).
\nonumber
\end{align}
Hence, using $\p_x\{
P(U)\p_x\UU\}=P(U)\p_x^2\UU+\p_x(P(U))\p_x\UU$ 
and \eqref{eq:RP}, we obtain 
\begin{align}
&R(J(U)U_x,U_x)\p_x^2\UU
\nonumber
\\
&=
\p_x
\left\{
R(J(U)U_x,U_x)\p_x\UU
\right\}
-R(\p_x\left\{J(U)U_x\right\},U_x)\p_x\UU
\nonumber
\\
&\quad 
-R(J(U)U_x,U_{xx})\p_x\UU
-R(J(U)U_x,U_x)\p_x(P(U))\p_x\UU
\nonumber
\\
&\quad
+A(U)(R(J(U)U_x,U_x)\p_x\UU, U_x).
\nonumber
\end{align}
Here, from \eqref{eq:kaehler2} and \eqref{eq:J1}, 
we see  
\begin{align}
\p_x\left\{
J(U)U_x
\right\}
&=\p_x(J(U))U_x+J(U)U_{xx}
=
-A(U)(J(U)U_x,U_x)+J(U)\UU. 
\label{eq:6mm}
\end{align}
In addition, recalling \eqref{eq:PN} and 
using $(\p_x\UU,\nu_k(U))=-(\UU,D_k(U)U_x)$ which comes from 
$\UU\perp \nu_k(U)$, 
we have 
\begin{align}
\p_x(P(U))\p_x\UU
&=
-A(U)(\p_x\UU,U_x)
-\sum_k(N(U)\p_x\UU,D_k(U)U_x)\nu_k(U)
\nonumber
\\
&\quad
-\sum_k(\p_x\UU,\nu_k(U))D_k(U)U_x
\nonumber
\\
&=
-A(U)(\p_x\UU,U_x)+\mathcal{O}(|U|+|U_x|+|\UU|). 
\label{eq:7mo}
\end{align}
Substituting \eqref{eq:6mm} and \eqref{eq:7mo}, and then  
using $U_{xx}+A(U)(U_x,U_x)=\UU$, \eqref{eq:R6}, 
and \eqref{eq:RA}, 
we deduce 
\begin{align}
&R(J(U)U_x,U_x)\p_x^2\UU
\nonumber
\\
&=
\p_x
\left\{
R(J(U)U_x,U_x)\p_x\UU
\right\}
-R(J(U)\UU,U_x)\p_x\UU
\nonumber
\\
&\quad 
-R(J(U)U_x,U_{xx})\p_x\UU
+R(J(U)U_x,U_x)A(U)(\p_x\UU,U_x)
\nonumber
\\
&\quad
+A(U)(R(J(U)U_x,U_x)\p_x\UU, U_x)
+
\mathcal{O}(|U|+|U_x|+|\UU|)
\nonumber
\\
&=
\p_x
\left\{
R(J(U)U_x,U_x)\p_x\UU
\right\}
-2R(J(U)\UU,U_x)\p_x\UU
\nonumber
\\
&\quad
+A(U)(R(J(U)U_x,U_x)\p_x\UU, U_x)
+
\mathcal{O}(|U|+|U_x|+|\UU|). 
\label{eq:8mmm}
\end{align}
Substituting \eqref{eq:8mmm} into \eqref{eq:mI_3} 
and then  
using \eqref{eq:R6},
we obtain 
\begin{align}
F_R 
&=
b\,R(\p_x^2\UU,J(U)U_x)U_x
+(b+c)\,\p_x\left\{
R(J(U)U_x,U_x)\p_x\UU
\right\}
\nonumber
\\
&\quad 
+(b+2c)\,R(J(U)\UU,U_x)\p_x\UU
+3b\,R(\p_x\UU,J(U)\UU)U_x
-2c\,R(\p_x\UU,J(U)U_x)\UU
\nonumber
\\
&\quad
+(b+c)A(U)(R(J(U)U_x,U_x)\p_x\UU, U_x)
+\mathcal{O}(|U|+|U_x|+| \UU |).
\label{eq:I_333}
\end{align} 
We observe the structure of $F_R$ in more
detail by  
using \eqref{eq:ppp1} and 
\eqref{eq:ppp2} 
in Proposition~\ref{proposition:Rsym}. 
Indeed, using \eqref{eq:R3} with $Y_1=\p_x\UU$, $Y_2=J(U)\UU$,
$Y_3=U_x$ and \eqref{eq:R1},  
and then using \eqref{eq:ppp2}
with $Y=\p_x\UU$,  
we deduce 
\begin{align}
&R(\p_x\UU,J(U)\UU)U_x
\nonumber
\\
&=-R(J(U)\UU,U_x)\p_x\UU-R(U_x,\p_x\UU)J(U)\UU
\nonumber
\\
&=-R(J(U)\UU,U_x)\p_x\UU+R(\p_x\UU,U_x)J(U)\UU
\nonumber
\\
&=
-\frac{1}{2}R(J(U)\UU,U_x)\p_x\UU
-\left(\frac{B_1(U)}{2}-\frac{B_2(U)}{4}-\frac{B_3(U)}{4}\right)
\p_x\UU.
\label{eq:7mm}
\end{align}
In the same way, using \eqref{eq:ppp1}, we can obtain 
\begin{align}
&R(\p_x\UU,J(U)U_x)\UU
\nonumber
\\
&=
-\frac{1}{2}R(J(U)\UU,U_x)\p_x\UU
+\left(\frac{B_1(U)}{2}+\frac{B_2(U)}{4}+\frac{B_3(U)}{4}\right)
\p_x\UU.
\label{eq:8mm}
\end{align}
Substituting \eqref{eq:7mm} and \eqref{eq:8mm},  
we have
\begin{align}
F_R
&=
b\,R(\p_x^2\UU,J(U)U_x)U_x
+(b+c)\,\p_x\left\{
R(J(U)U_x,U_x)\p_x\UU
\right\}
\nonumber
\\
&\quad 
+\left(-\frac{b}{2}+3c\right)\,R(J(U)\UU,U_x)\p_x\UU
+\sum_{i=1}^3b_iB_i(U)\p_x\UU
\nonumber
\\
&\quad
+(b+c)A(U)(R(J(U)U_x,U_x)\p_x\UU, U_x)
+\mathcal{O}(|U|+|U_x|+| \UU |),
\label{eq:I_3}
\end{align}
where $b_1=-\frac{3b}{2}-c$, 
$b_2=b_3=\frac{3b}{4}-\frac{c}{2}$, though the constants are now 
not important.
Substituting 
\eqref{eq:II}, 
\eqref{eq:III}, 
\eqref{eq:I_2}
and \eqref{eq:I_3}
into \eqref{eq:mUU}, 
we arrive at the following. 
\begin{align}
\p_t\UU
&=
a\,dw(\nabla_x^2J_u\nabla_x^3u_x)
+
\lambda\,
\p_x\left\{
J(U)\p_x\UU
\right\}
\nonumber
\\
&\quad
+
(-a+b)\,R(\p_x^2\UU,J(U)U_x)U_x
+(b+c)\,\p_x\left\{
R(J(U)U_x,U_x)\p_x\UU
\right\}
\nonumber
\\
&\quad 
+\left(-\frac{b}{2}+3c\right)\,R(J(U)\UU,U_x)\p_x\UU
+\sum_{i=1}^3b_iB_i(U)\p_x\UU
\nonumber
\\
&\quad
+r(U,U_x,\UU,\p_x\UU,\p_x^2\UU)
+\mathcal{O}(|U|+|U_x|+| \UU |),
\label{eq:1UU}
\end{align}
where 
\begin{align}
r(U,U_x,\UU,\p_x\UU,\p_x^2\UU)
&=\lambda\,
A(U)(J(U)\p_x\UU,U_x)
-a\,A(U)(\UU,J(U)\p_x^2\UU)
\nonumber
\\
&\quad
+(b+c)A(U)(R(J(U)U_x,U_x)\p_x\UU, U_x)
\nonumber
\\
&=
\sum_k\mathcal{O}(|\p_x^2\UU|+|\p_x\UU|)\nu_k(U).
\label{eq:r}
\end{align}
\par 
Let us now move on to the computation of the first term of 
the right hand side of 
\eqref{eq:1UU}. 
We set $I(U):=dw(\nabla_x^2J_u\nabla_x^3u_x)$. 
Starting from using \eqref{eq:cox2} twice, 
\begin{align}
I(U)&=
\p_x
\left\{
dw(\nabla_xJ_u\nabla_x^3u_x)
\right\}
+A(U)(dw(\nabla_xJ_u\nabla_x^3u_x), U_x)
\nonumber
\\
&=
\p_x
\left\{
\p_x\{dw(J_u\nabla_x^3u_x)\}
+
A(U)(dw(J_u\nabla_x^3u_x),U_x)
\right\}
\nonumber
\\
&\quad
+A(U)(\p_x\{dw(J_u\nabla_x^3u_x)\}
+
A(U)(dw(J_u\nabla_x^3u_x),U_x), U_x).
\nonumber
\end{align}
We have already obtained  
$dw(J_u\nabla_x^3u_x)
=J(U)\p_x^2\UU+J(U)\p_x(A(U))(\UU,U_x)$ 
by \eqref{eq:3mm}. Substituting this into the above,
we have  
\begin{align}
I(U)&=
\p_x^2\{ J(U)\p_x^2\UU\}
+\p_x^2
\{
J(U)\p_x(A(U))(\UU,U_x)
\}
\nonumber
\\
&
\quad
+\p_x\{
A(U)(J(U)\p_x^2\UU+J(U)\p_x(A(U))(\UU,U_x),U_x)
\}
\nonumber
\\
&\quad
+A(U)(\p_x\{
J(U)\p_x^2\UU+J(U)\p_x(A(U))(\UU,U_x)
\},U_x)
\nonumber
\\
&\quad
+A(U)(A(U)(J(U)\p_x^2\UU+J(U)\p_x(A(U))(\UU,U_x),U_x),U_x).
\nonumber
\end{align}
By a simple computation,  we see
\begin{align}
I(U)&=
\p_x^2\{ J(U)\p_x^2\UU\}
+2A(U)(J(U)\p_x^3\UU,U_x)
\nonumber
\\
&\quad
+
J(U)\p_x(A(U))(\p_x^2\UU,U_x)
+
\p_x(A(U))(J(U)\p_x^2\UU,U_x)
\nonumber
\\
&\quad
+2J(U)\p_x(A(U))(\p_x\UU,U_{xx})
+2\p_x(J(U))\p_x(A(U))(\p_x\UU,U_x)
\nonumber
\\
&\quad
+
2J(U)\p_x^2(A(U))(\p_x\UU,U_x)
+J(U)\p_x(A(U))(\UU,U_{xxx})
\nonumber
\\
&\quad
+J(U)\p_x^3(A(U))(\UU,U_x)
\nonumber
\\
&\quad
+r_{1}(U,U_x,\UU,\p_x\UU,\p_x^2\UU)
+\mathcal{O}(|U|+|U_x|+|\UU|),
\label{eq:I_1}
\end{align}
where 
\begin{align}
r_{1}(U,U_x,\UU,\p_x\UU,\p_x^2\UU)
&=
A(U)(\mathcal{O}(|\p_x^2\UU|+|\p_x\UU|),|U_{xx}|+|U_x|)
\nonumber
\\&=\sum_k
\mathcal{O}(|\p_x^2\UU|+|\p_x\UU|)\nu_k(U).
\label{eq:r1}
\end{align} 
Thus, we can rewrite as 
\begin{align}
I(U)
&=\p_x^2\{ J(U)\p_x^2\UU\}
+2A(U)(J(U)\p_x^3\UU,U_x)
\nonumber
\\
&\quad
+G^{II}_1+G^{II}_2
+2G^{I}_1+2G^{I}_2+2G^{I}_3
+G^{I}_4+G^{I}_5
\nonumber
\\
&\quad
+\sum_k\mathcal{O}(|\p_x^2\UU|+|\p_x\UU|)\nu_k(U)
+\mathcal{O}(|U|+|U_x|+|\UU|), 
\label{eq:W_1}
\end{align}
where 
\begin{align}
&G^{II}_1
=J(U)\p_x(A(U))(\p_x^2\UU,U_x), 
\quad
G^{II}_2=\p_x(A(U))(J(U)\p_x^2\UU,U_x), 
\nonumber
\\
&G_1^{I}
=J(U)\p_x(A(U))(\p_x\UU,U_{xx}), 
\quad
G^{I}_2=\p_x(J(U))\p_x(A(U))(\p_x\UU,U_x), 
\nonumber
\\
&G^{I}_3
=
J(U)\p_x^2(A(U))(\p_x\UU,U_x), 
\quad
G^{I}_4=J(U)\p_x(A(U))(\UU,U_{xxx}), 
\nonumber
\\
&G_{5}^{I}
=J(U)\p_x^3(A(U))(\UU,U_x).
\nonumber
\end{align}
For $G_1^{II}$, we can express by 
\begin{align}
G^{II}_1
&=T(U)\p_x^2\UU,
\label{eq:5mA0}
\end{align} 
where  $T(U)$ has been defined 
by \eqref{eq:T(U)}. 
\\
For $G_2^{II}$, we use \eqref{eq:A1} with 
$Y_1=J(U)\p_x^2\UU$ and $Y_2=U_x$ to see 
\begin{align}
G_2^{II}
&=\sum_k(J(U)\p_x^2\UU,\p(D_k(U)P(U))U_x)\nu_k(U)
\nonumber
\\
&\quad
+\sum_k(\p_x(P(U))J(U)\p_x^2\UU,D_k(U)U_x)\nu_k(U)
\nonumber
\\&\quad
+\sum_k(J(U)\p_x^2\UU,D_k(U)U_x)D_k(U)U_x.
\nonumber
\\
&=
-\sum_k(\p_x^2\UU,J(U)D_k(U)U_x)D_k(U)U_x
+\sum_{k}\mathcal{O}(|\p_x^2\UU|)\nu_{k}(U).
\nonumber
\end{align}
Moreover, from $N(U)D_k(U)U_x
=\displaystyle\sum_{\ell}(D_k(U)U_x,\nu_{\ell}(U))\nu_{\ell}(U)$, 
it follows that
\begin{align}
G_2^{II}
&=
-\sum_k(\p_x^2\UU,J(U)D_k(U)U_x)P(U)D_k(U)U_x
+\sum_{k}\mathcal{O}(|\p_x^2\UU|)\nu_{k}(U).
\nonumber
\end{align}
Recalling $(T(U))^{\ast}$ has been given by \eqref{eq:Tstar}, 
we have
\begin{align}
G^{II}_2
&=
-(T(U))^{\ast}\p_x^2\UU
+\sum_{k}\mathcal{O}(|\p_x^2\UU|)\nu_{k}(U).
\label{eq:6mA}
\end{align}
Combining \eqref{eq:5mA0} and \eqref{eq:6mA}, 
we have 
\begin{align}
G_{1}^{II}+G_2^{II}
&=
(T(U)-(T(U))^{\ast})\p_x^2\UU
+\sum_k\mathcal{O}(|\p_x^2\UU|)\nu_k(U), 
\label{eq:7mA}
\end{align}
which combined with \eqref{eq:331} shows 
\begin{align}
G_{1}^{II}+G_2^{II}
&=
-R(J(U)U_x,U_x)\p_x^2\UU
+
S_{+}(U)\p_x^2\UU
+\sum_k\mathcal{O}(|\p_x^2\UU|)\nu_k(U).
\label{eq:7mAA}
\end{align}
Here, from \eqref{eq:8mmm}, it follows that  
\begin{align}
&-R(J(U)U_x,U_x)\p_x^2\UU
\nonumber
\\
&=
-\p_x
\left\{
R(J(U)U_x,U_x)\p_x\UU
\right\}
+2R(J(U)\UU,U_x)\p_x\UU
\nonumber
\\
&\quad
-A(U)(R(J(U)U_x,U_x)\p_x\UU, U_x)
+
\mathcal{O}(|U|+|U_x|+|\UU|).
\label{eq:531}
\end{align}
Substituting \eqref{eq:531} into \eqref{eq:7mAA}, 
and rewriting the second term of the right hand side of 
\eqref{eq:7mAA} in divergence form,  
we arrive at 
\begin{align}
G_{1}^{II}+G_2^{II}
&=
-\p_x
\left\{
R(J(U)U_x,U_x)\p_x\UU
\right\}
+2R(J(U)\UU,U_x)\p_x\UU
\nonumber
\\
&\quad 
+
\p_x\left\{
S_{+}(U)\p_x\UU
\right\}
-
\p_x\left(
S_{+}(U)
\right)
\p_x\UU
\nonumber
\\
&\quad 
+\sum_k\mathcal{O}(|\p_x^2\UU|+|\p_x\UU|)\nu_k(U)
+\mathcal{O}(|U|+|U_x|+|\UU|).
\label{eq:8mA}
\end{align}
For $2G^{I}_1$, from \eqref{eq:A2} with $Y_1=\p_x\UU$ 
and $Y_2=U_x$, 
we see
\begin{align}
2G^{I}_1
&=
2J(U)\sum_k(P(U)\p_x\UU,D_k(U)P(U)U_{xx})P(U)D_k(U)U_x.
\label{eq:9mA}
\end{align}
Next we compute $2G^I_2+2G^I_2$. 
For $2G^I_2$, 
from \eqref{eq:A3} with $Y_1=\p_x\UU$ 
and $Y_2=U_x$, 
we see 
\begin{align}
2G^I_2
&=
-2\sum_k(P(U)\p_x\UU,\p_x(D_k(U)P(U))U_x)J(U)D_k(U)U_x
\nonumber
\\
&\quad
-2\sum_k(\p_x(P(U))\p_x\UU,D_k(U)U_x)
J(U)D_k(U)U_x
\nonumber
\\
&\quad 
-2\sum_{k,\ell}(P(U)\p_x\UU,D_k(U)U_x)(D_k(U)U_x,\nu_{\ell}(U))
J(U)D_{\ell}(U)U_{x}
\nonumber
\\
&\quad
+\sum_{\ell}\mathcal{O}(|\p_x\UU|)\nu_{\ell}(U).
\label{eq:3te}
\end{align}
For $2G^I_3$, 
from \eqref{eq:A4} with $Y_1=\p_x\UU$ 
and $Y_2=U_x$, 
we see 
\begin{align}
2G_3^I
&=
4\sum_k(P(U)\p_x\UU,\p_x(D_k(U)P(U))U_x)J(U)D_k(U)U_x
\nonumber
\\
&\quad
+
4\sum_k(\p_x(P(U))\p_x\UU,D_k(U)U_x)J(U)D_k(U)U_x
\nonumber
\\
&\quad
+2\sum_k(P(U)\p_x\UU,D_k(U)U_x)J(U)\p_x\{D_k(U)U_x\}. 
\label{eq:4te}
\end{align}
Using \eqref{eq:kaehler2}, we see 
\begin{align}
J(U)\p_x\{D_k(U)U_x\}
&=\p_x\{J(U)D_k(U)U_x\}
-\p_x(J(U))D_k(U)U_x
\nonumber
\\
&=
\p_x\{J(U)D_k(U)U_x\}
+A(U)(J(U)D_k(U)U_x,U_x)
\nonumber
\\
&\quad
+\sum_{\ell}
(D_k(U)U_x,\nu_{\ell}(U))J(U)D_{\ell}(U)U_x.
\nonumber
\end{align}
Substituting this into \eqref{eq:4te}, 
and combining with \eqref{eq:3te}, 
we obtain 
\begin{align}
2G^I_2+2G^I_{3}
&=
(4-2)\sum_k(P(U)\p_x\UU,\p_x(D_k(U)P(U))U_x)J(U)D_k(U)U_x
\nonumber
\\
&\quad 
+2\sum_k(P(U)\p_x\UU,D_k(U)U_x)\p_x\{J(U)D_k(U)U_x\}
\nonumber
\\
&\quad 
+
2\sum_k(P(U)\p_x\UU,D_k(U)U_x)A(U)(J(U)D_k(U)U_x,U_x)
\nonumber
\\
&\quad
+(2-2)\sum_{k,\ell}
(P(U)\p_x\UU,D_k(U)U_x)(D_k(U)U_x,\nu_{\ell}(U))
J(U)D_{\ell}(U)U_{x}
\nonumber
\\
&\quad
+(4-2)\sum_k(\p_x(P(U))\p_x\UU,D_k(U)U_x)J(U)D_k(U)U_x
\nonumber
\\
&\quad
+\sum_{\ell}\mathcal{O}(|\p_x\UU|)\nu_{\ell}(U).
\nonumber
\end{align}
Since $\p_x(D_k(U)P(U))U_x=\p_x\{D_k(U)U_x\}-D_k(U)P(U)U_{xx}$ 
follows from 
$P(U)U_x=U_x$, we obtain 
\begin{align}
2G^I_2+2G^I_{3}
&=
-2\sum_k(P(U)\p_x\UU,D_k(U)P(U)U_{xx})J(U)D_k(U)U_x
\nonumber
\\
&\quad
+2\sum_k(P(U)\p_x\UU,\p_x\{D_k(U)U_x\})J(U)D_k(U)U_x
\nonumber
\\
&\quad 
+2\sum_k(P(U)\p_x\UU,D_k(U)U_x)\p_x\{J(U)D_k(U)U_x\}
\nonumber
\\
&\quad
+2\sum_k(\p_x(P(U))\p_x\UU,D_k(U)U_x)J(U)D_k(U)U_x
\nonumber
\\
&\quad
+\sum_{\ell}\mathcal{O}(|\p_x\UU|)\nu_{\ell}(U). 
\nonumber
\end{align}
Furthermore, recalling $T(U)=J(U)\p_x(A(U))(\cdot,U_x)$ by definition 
and using the 
expression \eqref{eq:TU1} for $\p_x(T(U))$, we can write
\begin{align}
2G^I_2+2G^I_{3}
&=
-2\sum_k(P(U)\p_x\UU,D_k(U)P(U)U_{xx})J(U)D_k(U)U_x
\nonumber
\\
&\quad
+2\p_x(T(U))\p_x\UU
+\sum_{\ell}\mathcal{O}(|\p_x\UU|)\nu_{\ell}(U).
\nonumber
\end{align} 
Observing \eqref{eq:9mA}, 
the first term of the right hand of the above 
is just  $-2G^{I}_1$.
Thus,  we get 
\begin{align}
2G^I_1+2G^I_2+2G^I_{3}
&=
2\p_x(T(U))\p_x\UU
+\sum_{\ell}\mathcal{O}(|\p_x\UU|)\nu_{\ell}(U).
\label{eq:66te}
\end{align} 
For the first term of the right hand side, 
we now apply \eqref{eq:ppp3} in Proposition~\ref{proposition:2TU} 
with $Y=\p_x\UU$. 
In the computation, note first that 
\begin{align}
&J(U)R(U_x,\p_x(N(U))\p_x\UU)U_x
\nonumber
\\
&=
J(U)R(U_x,P(U)\p_x(N(U))\p_x\UU)U_x
\qquad (\because \eqref{eq:RP})
\nonumber
\\
&=
J(U)\sum_{\ell}(\p_x\UU,\nu_{\ell}(U))R(U_x,D_{\ell}(U_x)U_x)U_x
\qquad 
(\because \eqref{eq:PN})
\nonumber
\\
&=\mathcal{O}(|U|+|U_x|+|\UU|)
\nonumber
\end{align}
holds, where in the last equality we use  
$(\p_x\UU,\nu_k(U))=-(\UU,D_k(U)U_x)$ which follows from 
$\UU\perp \nu_k(U)$. 
Note second that 
$$A(U)(J(U)R(U_x,\p_x\UU)U_x,U_x)
=\sum_{\ell}\mathcal{O}(|\p_x\UU|)\nu_{\ell}(U)$$
holds. Noting them, we apply 
\eqref{eq:ppp3} with $Y=\p_x\UU$, and then 
obtain 
\begin{align}
\p_x(T(U))\p_x\UU
&=
-R(J(U)\UU,U_x)\p_x\UU
-\frac{1}{2}(B_2(U)+B_3(U))\p_x\UU
\nonumber
\\
&\quad
+\sum_k\p_x\left(
(U_x,D_k(U)U_x)J(U)D_k(U)P(U)
\right)\p_x\UU
\nonumber
\\
&\quad
+\sum_{\ell}\mathcal{O}(|\p_x\UU|)\nu_{\ell}(U)
+\mathcal{O}(|U|+|U_x|+|\UU|).
\nonumber
\end{align}
Therefore, we obtain as follows:
\begin{align}
2G^I_1+2G^I_2+2G^I_{3}
&=
-2R(J(U)\UU,U_x)\p_x\UU
-(B_2(U)+B_3(U))\p_x\UU
\nonumber
\\
&\quad
+2\sum_k\p_x\left(
(U_x,D_k(U)U_x)J(U)D_k(U)P(U)
\right)\p_x\UU
\nonumber
\\
&\quad
+\sum_{\ell}\mathcal{O}(|\p_x\UU|)\nu_{\ell}(U)
+\mathcal{O}(|U|+|U_x|+|\UU|).
\label{eq:67te}
\end{align}
For $G^{I}_4$, 
from \eqref{eq:A2} with $Y_1=\UU=P(U)\UU$ 
and $Y_2=U_{xxx}$,  
we see
$$
G^I_4
=\sum_k
(\UU,D_k(U)P(U)U_{xxx})J(U)D_k(U)U_x.
$$
Since $U_{xxx}=\p_x\UU+\mathcal{O}(|U|+|U_x|+|\UU|)$,  
we deduce 
\begin{align}
G^I_4
&=\sum_k
(\UU, D_k(U)P(U)\p_x\UU)J(U)D_k(U)U_x
+\mathcal{O}(|U|+|U_x|+|\UU|)
\nonumber
\\
&=
J(U)\sum_k(\UU, D_k(U)P(U)\p_x\UU)P(U)D_k(U)U_x
+\mathcal{O}(|U|+|U_x|+|\UU|).
\label{eq:7te}
\end{align}
Applying \eqref{eq:embcurvature} 
for the first term of the right hand side 
with 
$Y_1=U_x$, $Y_2=\p_x\UU$, 
$Y_3=\UU$, and using 
$P(U)\p_x\UU=\p_x\UU+\mathcal{O}(|U|+|U_x|+|\UU|)$, 
we deduce 
\begin{align}
G_4^{I}
&=
J(U)R(U_x,\p_x\UU)\UU
+
J(U)\sum_k
(\UU, D_k(U)U_x)P(U)D_k(U)P(U)\p_x\UU
\nonumber
\\
&\quad
+\mathcal{O}(|U|+|U_x|+|\UU|) 
\nonumber
\\
&=
-R(\p_x\UU,U_x)J(U)\UU
+
\sum_k
(\UU, D_k(U)U_x)J(U)D_k(U)\p_x\UU
\nonumber
\\
&\quad
+\mathcal{O}(|U|+|U_x|+|\UU|). 
\nonumber
\end{align}
Furthermore, applying \eqref{eq:ppp2} with 
$Y=\p_x\UU$, we arrive at
\begin{align}
G^I_4 &=
-\frac{1}{2}R(J(U)\UU,U_x)\p_x\UU
+\left(\frac{1}{2}B_1(U)-\frac{1}{4}B_2(U)
-\frac{1}{4}B_3(U)\right)\p_x\UU
\nonumber
\\
&\quad
+
\sum_k
(\UU, D_k(U)U_x)J(U)D_k(U)\p_x\UU
+\mathcal{O}(|U|+|U_x|+|\UU|). 
\label{eq:12mA}
\end{align}
For $G^I_5=J(U)\p_x^3(A(U))(\UU,U_x)$, 
starting from taking the partial derivative of 
\eqref{eq:A1} twice with respect to $x$,  
we see
\begin{align}
&\p_x^3(A(U))(\UU,U_x)
\nonumber
\\
&=
\sum_k(P(U)\UU,\p_x^3(D_k(U)P(U))U_x)\nu_k(U)
+\sum_k(\p_x^3(P(U))\UU,D_k(U)P(U)U_x)\nu_k(U)
\nonumber
\\
&\quad
+\sum_k(P(U)\UU,D_k(U)P(U)U_x)D_k(U)U_{xxx}
+\mathcal{O}(|U|+|U_x|+|U_{xx}|). 
\nonumber
\end{align}
Since $J(U)\nu_k(U)=0$ and
$U_{xxx}=\p_x\UU
+\mathcal{O}(|U|+|U_x|+|\UU|)$, we have
\begin{align}
G^{I}_5
&=
\sum_k(\UU,D_k(U)U_x)J(U)D_k(U)\p_x\UU
+\mathcal{O}(|U|+|U_x|+|\UU|).
\label{eq:13mA}
\end{align}
Gathering the information 
\eqref{eq:8mA}, 
\eqref{eq:67te},
\eqref{eq:12mA} and
\eqref{eq:13mA}, 
we obtain 
\begin{align}
&G^{II}_1+G^{II}_2
+2G^I_1+2G^I_2+2G^{I}_3
+G^I_4+G^I_5
\nonumber
\\
&=
-\p_x
\left\{
R(J(U)U_x,U_x)\p_x\UU
\right\}
+\p_x
\left\{
S_{+}(U)\p_x\UU
\right\} 
\nonumber
\\
&\quad 
+
\p_x
\left(
-S_{+}(U)+2\sum_k
(U_x,D_k(U)U_x)P(U)J(U)D_k(U)P(U)
\right)
\p_x\UU
\nonumber
\\
&\quad
-\frac{1}{2}R(J(U)\UU,U_x)\p_x\UU
+\left(\frac{1}{2}B_1(U)-\frac{5}{4}B_2(U)
-\frac{5}{4}B_3(U)\right)\p_x\UU
\nonumber
\\
&\quad
+
2\sum_k
(\UU, D_k(U)U_x)J(U)D_k(U)\p_x\UU
\nonumber
\\
&\quad
+\sum_k\mathcal{O}(|\p_x^2\UU|+|\p_x\UU|)\nu_k(U)
+\mathcal{O}(|U|+|U_x|+|\UU|)
\nonumber
\\
&=
-\p_x
\left\{
R(J(U)U_x,U_x)\p_x\UU
\right\}
+\p_x
\left\{
S_{+}(U)\p_x\UU
\right\} 
+
\p_x(S_{-}(U))
\p_x\UU
\nonumber
\\
&\quad
-\frac{1}{2}R(J(U)\UU,U_x)\p_x\UU
+\left(\frac{1}{2}B_1(U)-\frac{5}{4}B_2(U)
-\frac{5}{4}B_3(U)\right)\p_x\UU
\nonumber
\\
&\quad
+
2\sum_k
(\UU, D_k(U)U_x)J(U)D_k(U)\p_x\UU
\nonumber
\\
&\quad
+\sum_k\mathcal{O}(|\p_x^2\UU|+|\p_x\UU|)\nu_k(U)
+\mathcal{O}(|U|+|U_x|+|\UU|).
\label{eq:y5}
\end{align}
Substituting 
\eqref{eq:y5} into \eqref{eq:W_1}, 
we get the expression of $I(U)=dw(\nabla_x^2J_u\nabla_x^3u_x)$. 
Consequently, 
substituting it multiplied by $a$ into \eqref{eq:1UU}, and 
combining with \eqref{eq:r}, 
we derive the desired \eqref{eq:UU} with 
\eqref{eq:UU2}. 
\end{proof}
\subsection{Classical Energy Estimate for $\WW$ in $L^2$}
\label{subsection:energy}
We derive the classical energy estimate for $\WW$ in $L^2$. 
In what follows, 
the inner product and the norm in $L^2$
for $\RR^d$-valued functions on $\TT$
will be denoted by
$\lr{\cdot}$ 
and 
$\|\cdot\|_{L^2}$ respectively. 
That is, 
for $\phi, \psi:\TT\to \RR^d$, 
$\lr{\phi,\psi}$ and $\|\phi\|_{L^2}$ 
is given by 
$
\lr{\phi,\psi}
=
\int_{\TT}
(\phi(x),\psi(x))
\,dx$ 
and  
$\|\phi\|_{L^2}
=\sqrt{\lr{\phi,\phi}}
$
respectively. 
In the same way, the standard Sobolev norm in $H^k(\TT;\RR^d)$ 
with integer $k\geqslant 1$ is denoted by $\|\cdot\|_{H^k}$.
Moreover, 
we recall  the following properties hold:
\begin{align}
&\UU,\VV, \WW\in L^{\infty}(-T,T;H^4(\TT;\RR^d))
\cap C([-T,T];H^4(\TT;\RR^d)), 
\nonumber 
\\
&\p_x^{\ell}U_x, \p_x^{\ell}V_x\in 
L^{\infty}(-T,T; C(\TT;\RR^d))
\quad  
(\ell=0,1,\ldots,4), 
\nonumber
\\
&\p_x^{\ell}\UU, \p_x^{\ell}\VV\in 
L^{\infty}(-T,T; C(\TT;\RR^d))
\quad (\ell=0,1,\ldots,3),
\nonumber
\end{align} 
since $k\geqslant 5$ is imposed. 
\par
Noting $\VV$ also satisfies \eqref{eq:UU} 
replacing $(U,U_x,\UU)$ with $(V,V_x,\VV)$, 
we take the difference of $\p_t\UU$ and $\p_t\VV$,   
which yields 
\begin{align}
\p_t\WW
&=L(U)\WW
+a(J(U)-J(V))\p_x^4\VV
\nonumber
\\
&\quad 
+\sum_k\mathcal{O}(|\p_x^2\WW|+|\p_x\WW|)\nu_k(U)
+\mathcal{O}(|Z|+|Z_x|+|\WW|),
\label{eq:WW}
\end{align} 
for almost every $t\in (-T,T)$, 
where $L(U)$ is the partial differential operator given 
in Proposition~\ref{proposition:UU}. 
Since $\WW\in L^{\infty}(-T,T;H^4(\TT;\RR^d))$ as commented 
above, we see  $\p_t\WW\in L^{\infty}(-T,T;L^2(\TT;\RR^d))$ 
holds.  
Hence, following the argument in, e.g.,  
\cite[Lemma~3.2]{Temam}
or
\cite[Theorem~7.2]{Robinson}, 
we can show  
\begin{align}
&\frac{1}{2}
\frac{d}{dt} 
\|\WW(t)\|_{L^2}^2
=
\lr{\p_t\WW(t),\WW(t)}
\label{eq:4291add}
\end{align}
holds for almost every $t\in (-T,T)$.
Using \eqref{eq:WW} and \eqref{eq:UU2},  
we have 
\begin{align}
&
\lr{\p_t\WW,\WW}
\nonumber
\\
&=
a\lr{
\p_x^2\{J(U)\p_x^2\WW\}, \WW
}
+2a
\lr{
A(U)(J(U)\p_x^3\WW,U_x),\WW
}
\nonumber
\\
&\quad
+\lambda
\lr{
\p_x\left\{
J(U)\p_x\WW
\right\}
,\WW} 
+(-a+b)\lr{R(\p_x^2\WW,J(U)U_x)U_x
,\WW}
\nonumber
\\
&\quad
+(-a+b+c)
\lr{\p_x\{R(J(U)U_x,U_x)\p_x\WW\}, \WW
}
\nonumber
\\
&\quad
+
\left(-\frac{a}{2}-\frac{b}{2}+3c\right)
\lr{
R(J(U)\UU,U_x)\p_x\WW,\WW
}
+\sum_{i=1}^3
c_i\lr{B_i(U)\p_x\WW,\WW}
\nonumber
\\
&\quad
+
a\lr{\p_x\{S_{+}(U)\p_x\WW\},\WW}
+a\lr{
\p_x(S_{-}(U))\p_x\WW,\WW
}
\nonumber
\\
&\quad
+2a\lr{
\sum_k(\UU,D_k(U)U_x)J(U)D_k(U)\p_x\WW,\WW
}
\nonumber
\\
&\quad 
+a\lr{(J(U)-J(V))\p_x^4\VV,\WW}
\nonumber
\\
&\quad
+
\lr{
\sum_k
\mathcal{O}
(|\p_x^2\WW|+|\p_x\WW|)\nu_k(U), 
\WW
}
+
\lr{
\mathcal{O}
(|Z|+|Z_x|+|\WW|), 
\WW
}
\label{eq:EEn}
\end{align}
for almost every $t\in (-T,T)$. 
We furthermore compute the right hand side of the above term by term. 
Before that,  
we observe some key properties related to $\nu_k(U)$ and $D_k(U)$ 
for $k=2n+1\ldots,d$.  
\begin{proposition}
\label{proposition:nu}
Under the same setting as above, 
the following properties hold. 
\begin{align}
(\nu_k(U), \WW)&=
-(\nu_k(U)-\nu_k(V), \VV)
=\mathcal{O}(|Z|), 
\label{eq:key1}
\\
(\nu_k(U),\p_x\WW)
&=
-(D_k(U)U_x,\WW)
-(D_k(U)Z_x,\VV)
+
\mathcal{O}(|Z|). 
\label{eq:key2}
\end{align}
\end{proposition}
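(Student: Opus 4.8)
The plan is to derive both identities directly from the two pointwise orthogonality relations $(\nu_k(U),\UU)=0$ and $(\nu_k(V),\VV)=0$, which hold because $\UU=dw_u(\nabla_xu_x)$ and $\VV=dw_v(\nabla_xv_x)$ are tangential to $w(N)$ while $\nu_k(U)$, $\nu_k(V)$ are normal. The only analytic input beyond these relations is the smoothness of the extended fields $\nu_k(\cdot)$ and $D_k(\cdot)=\operatorname{grad}\nu_k$ on $\RR^d$, which gives $\nu_k(U)-\nu_k(V)=\mathcal{O}(|Z|)$ and $D_k(U)-D_k(V)=\mathcal{O}(|Z|)$ on the compact set $w(N)$, together with the uniform boundedness of $U$, $U_x$, $\UU$, $V$, $V_x$, $\VV$ on $[-T,T]\times\TT$ furnished by the regularity $k\geqslant5$; the implied constants in every $\mathcal{O}(|Z|)$ below depend only on these bounds.

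For \eqref{eq:key1}, I would first write $\WW=\UU-\VV$ and use $(\nu_k(U),\UU)=0$ to get $(\nu_k(U),\WW)=-(\nu_k(U),\VV)$. Inserting $0=(\nu_k(V),\VV)$ then yields $(\nu_k(U),\WW)=-(\nu_k(U)-\nu_k(V),\VV)$, which is the claimed identity; the final estimate follows at once from $\nu_k(U)-\nu_k(V)=\mathcal{O}(|Z|)$ and the boundedness of $\VV$.

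For \eqref{eq:key2}, the key preliminary step is to differentiate the orthogonality relations in $x$. Recalling $\p_x\{\nu_k(U)\}=D_k(U)U_x$ from the definition $D_k=\operatorname{grad}\nu_k$, differentiating $(\nu_k(U),\UU)=0$ gives $(\nu_k(U),\p_x\UU)=-(D_k(U)U_x,\UU)$, and likewise $(\nu_k(V),\p_x\VV)=-(D_k(V)V_x,\VV)$. I would then split $(\nu_k(U),\p_x\WW)=(\nu_k(U),\p_x\UU)-(\nu_k(U),\p_x\VV)$ and replace $\nu_k(U)$ by $\nu_k(V)+\mathcal{O}(|Z|)$ in the last term, absorbing $(\nu_k(U)-\nu_k(V),\p_x\VV)$ into $\mathcal{O}(|Z|)$; this yields $(\nu_k(U),\p_x\WW)=-(D_k(U)U_x,\UU)+(D_k(V)V_x,\VV)+\mathcal{O}(|Z|)$. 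To reach the stated form I would expand only $\UU=\VV+\WW$ in the first term, keeping the genuine term $-(D_k(U)U_x,\WW)$, and approximate $D_k(V)=D_k(U)+\mathcal{O}(|Z|)$ in the second; the two remaining $\VV$-coupled terms then combine via $U_x-V_x=Z_x$ into $-(D_k(U)Z_x,\VV)$, producing exactly $-(D_k(U)U_x,\WW)-(D_k(U)Z_x,\VV)+\mathcal{O}(|Z|)$.

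I do not anticipate a genuine obstacle here: the statement is essentially a first-order Taylor expansion of the tangency constraints around $V$, and the work reduces to careful bookkeeping of the linear-in-$Z$ error terms. The one point demanding care is that $\WW$ is \emph{not} itself of order $\mathcal{O}(|Z|)$ --- it is precisely the second-order quantity to be controlled --- so in the expansions above one must couple $\WW$ only with bounded coefficients and never discard a $\WW$-coupled term as small, while isolating the genuinely small factor $Z_x$ inside the $\VV$-coupled terms. Provided every quantity multiplying an $\mathcal{O}(|Z|)$ factor, namely $\VV$, $\p_x\VV$, $V_x$, and the first derivatives of $\nu_k$ and $D_k$, is uniformly bounded on $[-T,T]\times\TT$, which holds since $k\geqslant5$, the resulting $\mathcal{O}(|Z|)$ bounds are uniform in $(t,x)$ and ready for use in the subsequent $L^2$ energy estimate.
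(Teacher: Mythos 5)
Your proof is correct and complete: both identities follow exactly as you describe from the tangency relations $(\nu_k(U),\UU)=0$, $(\nu_k(V),\VV)=0$, the formula $\p_x\{\nu_k(U)\}=D_k(U)U_x$, and the Lipschitz bounds $\nu_k(U)-\nu_k(V)=\mathcal{O}(|Z|)$, $D_k(U)-D_k(V)=\mathcal{O}(|Z|)$, with the $\WW$-coupled term correctly retained rather than absorbed. The paper itself omits the proof, citing \cite[Lemma~3.1]{onodera4}, so there is nothing to compare against in the text; your direct computation is the natural argument and matches what that reference carries out.
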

The proof is given in \cite[Lemma~3.1]{onodera4}, and thus we omit the 
detail.
In view of \eqref{eq:key1}, 
we find
that the terms 
including $\p_x^2\WW$ or $\p_x\WW$ 
can be handled as a harmless term if they are expressed as a 
linear combination of $\nu_{2n+1}(U), \ldots, \nu_d(U)$.  
See, e.g., the argument to show \eqref{eq:18e}.
Here and hereafter, 
various positive constants depending on 
$\|U_x\|_{L^{\infty}(-T,T;H^5)}$ and 
$\|V_x\|_{L^{\infty}(-T,T;H^5)}$
will be denoted by the same $C$
without any comments. 
Besides, we define $D(t)$ so that the square is given by 
$$D(t)^2=\|Z(t)\|_{L^2}^2
+\|Z_x(t)\|_{L^2}^2
+\|\WW(t)\|_{L^2}^2.
$$ 
\par 
Going back to \eqref{eq:EEn}, 
we use the integration by parts and the skew-symmetry of 
$J(U)$ to see
\begin{align}
&a\lr{
\p_x^2\{J(U)\p_x^2\WW\}, \WW
}
=
a\lr{
J(U)\p_x^2\WW, \p_x^2\WW
}=0,
\label{eq:y6}
\\
&\lambda\,
\lr{
\p_x\left\{
J(U)\p_x\WW
\right\}
,\WW
}
=
-\lambda\,
\lr{
J(U)\p_x\WW
,\p_x\WW
}
=0. 
\label{eq:10tte}
\end{align} 
By the Cauchy-Schwartz inequality, 
we deduce  
\begin{align}
\lr{
\mathcal{O}
(|Z|+|Z_x|+|\WW|), 
\WW
}
&\leqslant 
\|\mathcal{O}
(|Z|+|Z_x|+|\WW|)\|_{L^2}
\|\WW\|_{L^2} 
\leqslant 
C\,D(t)^2. 
\label{eq:11tte}
\end{align}
By \eqref{eq:key1}, we see $(\nu_k(U),\WW)=\mathcal{O}(|Z|)$.
Thus, integrating by parts, 
we obtain
\begin{align}
\lr{
\sum_k
\mathcal{O}
(|\p_x^2\WW|+|\p_x\WW|)\nu_k(U), 
\WW
}
&\leqslant 
C\,D(t)^2.
\label{eq:18e}
\end{align}
Integrating by parts, and then using 
\eqref{eq:R1}-\eqref{eq:R2}, 
we have   
\begin{align}
&\lr{\p_x\left\{
R(J(U)U_x,U_x)\p_x\WW
\right\}, \WW}
=
-\lr{R(J(U)U_x,U_x)\p_x\WW, \p_x\WW}
=0.
\label{eq:10mm}
\end{align} 
Noting the  symmetry of $B_i(U)$ discussed 
in  Proposition~\ref{proposition:B_isym},
and integrating by parts, we deduce   
\begin{align}
\lr{B_i(U)\p_x\WW, \WW}
&=
-\frac{1}{2}\lr{\p_x(B_i(U))\WW,\WW}
\leqslant 
C\,D(t)^2. 
\label{eq:11mm}
\end{align}
Recall that $S_{+}(U)$ and $S_{-}(U)$ defined in 
Definition~\ref{definition:spm}  or 
\eqref{eq:331}-\eqref{eq:431}
are respectively skew-symmetric and symmetric, that is, 
$(S_{\pm}(U)Y_1,Y_2)=\mp (Y_1,S_{\pm}(U)Y_2)$ 
for any $Y_1,Y_2:[-T,T]\times \TT\to \RR^d$. 
Using the skew-symmetry of $S_{+}(U)$  
and the symmetry of $\p_x(S_{-}(U))$, 
we have 
\begin{align}
&\lr{\p_x\{S_{+}(U)\p_x\WW\}, \WW}
=
-\lr{
S_{+}(U)\p_x\WW, \p_x\WW
}
=0, 
\label{eq:y7}
\\
&
\lr{\p_x(S_{-}(U))\p_x\WW,\WW}
\leqslant 
C\,D(t)^2.
\label{eq:y8}
\end{align}
By the Cauchy-Schwartz inequality, 
the Sobolev embedding 
$H^1(\TT;\RR^d)\subset C(\TT;\RR^d)$, 
and $\VV\in L^{\infty}(-T,T;H^4(\TT;\RR^d))$, 
we deduce
\begin{align}
\lr{(J(U)-J(V)\p_x^4\VV,\WW}
&\leqslant 
\|(J(U)-J(V))\p_x^4\VV\|_{L^2} \|\WW\|_{L^2}
\nonumber
\\&\leqslant 
C\|Z\|_{H^1}\|\p_x^4\VV\|_{L^2}\|\WW\|_{L^2}
\nonumber
\\
&\leqslant 
C\,D(t)^2.
\label{eq:423add}
\end{align}
We next set $E:=\lr{A(U)(J(U)\p_x^3\WW,U_x),\WW}$.  
Although the third order term $\p_x^3\WW$ is seemingly 
included in $E$, we will see that  
only a first-order loss of derivative occurs  
thanks to the property of $A(U)(\cdot,\cdot)$ 
as demonstrated below. 
Noting 
$A(U)(\mathcal{O}(|\p_x^2\WW|), |U_x|+|U_{xx}|)
=\sum_k\mathcal{O}(|\p_x^2\WW|)\nu_k(U)$ 
and \eqref{eq:key1}, 
we use the integration by parts to obtain 
\begin{align}
2E^{III}
&\leqslant
-2\lr{
\p_x(A(U))(J(U)\p_x^2\WW, U_x), \WW
}
\nonumber
\\
&\quad
-2\lr{
A(U)(J(U)\p_x^2\WW, \p_x\WW)
}
+C\,D(t)^2.
\label{eq:y1}
\end{align}
By  \eqref{eq:A1} and \eqref{eq:key1}, we see 
\begin{align}
&(\p_x(A(U))(J(U)\p_x^2\WW, U_x), \WW)
\nonumber
\\
&=
\sum_k
(J(U)\p_x^2\WW,\p_x(D_k(U)P(U))U_x)(\nu_k(U),\WW)
\nonumber
\\
&\quad
+
\sum_k
(J(U)\p_x^2\WW,D_k(U)U_x)(D_k(U)U_x,\WW)
\nonumber
\\
&\quad
+\sum_k(
\p_x(P(U))J(U)\p_x^2\WW,D_k(U)U_x)(\nu_k(U),\WW)
\nonumber
\\
&=
\sum_k
(J(U)\p_x^2\WW,D_k(U)U_x)(D_k(U)U_x,\WW)
\nonumber
\\
&\quad
+
\sum_k
(J(U)\p_x^2\WW,\p_x(D_k(U)P(U))U_x)\mathcal{O}(|Z|)
\nonumber
\\
&\quad
+\sum_k(
\p_x(P(U)J(U)\p_x^2\WW,D_k(U)U_x)\mathcal{O}(|Z|).
\label{eq:y2}
\end{align}
By \eqref{eq:key2}, we see
\begin{align}
(A(U)(J(U)\p_x^2\WW,U_x),\p_x\WW)
&=
\sum_k(J(U)\p_x^2\WW,D_k(U)U_x)(\nu_k(U),\p_x\WW)
\nonumber
\\
&=
-\sum_k(J(U)\p_x^2\WW,D_k(U)U_x)(D_k(U)U_x,\WW)
\nonumber
\\
&\quad
-\sum_k(J(U)\p_x^2\WW,D_k(U)U_x)(D_k(U)Z_x,\VV)
\nonumber
\\
&\quad
-\sum_k(J(U)\p_x^2\WW,D_k(U)U_x)\mathcal{O}(|Z|).
\label{eq:y3}
\end{align}
Combining \eqref{eq:y2} and \eqref{eq:y3}, 
and using the integration by parts,  
we have 
\begin{align}
2E
&\leqslant 
2\lr{\sum_k(J(U)\p_x^2\WW,D_k(U)U_x)D_k(U)Z_x,\VV}
+C\,D(t)^2
\nonumber
\\
&\leqslant
2\lr{\sum_k(J(U)\WW,D_k(U)U_x)D_k(U)Z_{xxx},\VV}
+C\,D(t)^2.
\label{eq:y4}
\end{align}
Furthermore, since 
$(D_k(U)Z_{xxx},\VV)=(D_k(U)Z_{xxx},-\WW+\UU)=
(D_k(U)Z_{xxx},\UU)+\mathcal{O}(|\WW|)$ and 
$Z_{xxx}=P(U)\p_x\WW+\mathcal{O}(|Z|+|Z_x|+|\WW|)$, 
we see
\begin{align}
(D_k(U)Z_{xxx},\VV)
&=(D_k(U)P(U)\p_x\WW,\UU)+\mathcal{O}(|Z|+|Z_x|+|\WW|).
\nonumber
\end{align}
Substituting this into \eqref{eq:y4}, we obtain
\begin{align}
2\,E
&\leqslant 
2\lr{
\sum_{k}
\left(J(U)\WW,
D_{k}(U)U_x
\right)
D_k(U)P(U)\p_x\WW
, \UU
}
+C\,D(t)^2.
\label{eq:skip}
\end{align}
Furthermore,  using $N(U)D_k(U)P(U)\p_x\WW \perp \UU$, 
we have 
\begin{align}
2\,E
&\leqslant 
2\lr{
\sum_{k}
\left(J(U)\WW,
D_{k}(U)U_x
\right)
P(U)D_k(U)P(U)\p_x\WW
, \UU
}
+C\,D(t)^2.
\nonumber
\end{align}
Then, applying 
\eqref{eq:embcurvature} 
with 
$Y_1=\p_x\WW$, 
$Y_2=U_x$
$Y_3=J(U)\WW$, 
we obtain 
\begin{align}
2\,E
&\leqslant 
2\lr{R(\p_x\WW,U_x)J(U)\WW,\UU}
\nonumber
\\
&\quad
+
2\lr{
\sum_k
\left(J(U)\WW,
D_{k}(U) P(U)\p_x\WW
\right)
P(U)D_k(U)U_x
, \UU
} 
+C\,D(t)^2.
\end{align}
Using \eqref{eq:R2} and \eqref{eq:R6} for the first term, 
and 
$N(U)\p_x\WW
=
\mathcal{O}
\left(
|Z|+|Z_x|+|\WW|
\right)$
and 
$(P(U)D_k(U)U_x,\UU)
=(D_k(U)U_x,P(U)\UU)=(D_k(U)U_x, \UU)$
for the second term, 
we deduce 
\begin{align}
2\,E
&\leqslant 
2\lr{R(\p_x\WW,U_x)J(U)\UU, \WW}
\nonumber
\\
&\quad
+
2\lr{
\sum_k
(P(U)D_k(U)U_x,\UU)
D_k(U)P(U)\p_x\WW
, J(U)\WW
} 
+C\,D(t)^2
\nonumber
\\
&\leqslant 
2\lr{R(\p_x\WW,U_x)J(U)\UU, \WW}
\nonumber
\\
&\quad
-
2\lr{
\sum_k
(\UU, D_k(U)U_x)
J(U)D_k(U)\p_x\WW
, \WW
} 
+C\,D(t)^2.
\nonumber
\end{align}
Applying \eqref{eq:ppp2} 
in Proposition~\ref{proposition:Rsym}
with 
$Y=\p_x\WW$, 
and noting 
$\lr{B_i(U)\p_x\WW,\WW}\leqslant C\, D(t)^2$ 
follows from the symmetry of $B_i(U)$ 
as we obtain \eqref{eq:11mm}, 
we obtain 
\begin{align}
&2a\lr{A(U)(J(U)\p_x^3\WW,U_x),\WW}
(=2a\,E)
\nonumber
\\
&\leqslant 
a\lr{R(J(U)\UU,U_x) \p_x\WW, \WW}
\nonumber
\\
&\quad
-2a
\lr{
\sum_k
(\UU, D_k(U)U_x)
J(U)D_k(U)\p_x\WW
, \WW
} 
+C\,D(t)^2.
\label{eq:3mo}
\end{align}
\par 
Finally, by substituting 
\eqref{eq:y6}--\eqref{eq:423add}, 
and \eqref{eq:3mo} 
into \eqref{eq:EEn} combined with \eqref{eq:4291add},
we conclude 
\begin{align}
\frac{1}{2}\frac{d}{dt}
\|\WW\|_{L^2}^2
&\leqslant
(-a+b)\lr{\,R(\p_x^2\WW,J(U)U_x)U_x,\WW}
\nonumber
\\
&\quad 
+\left(a-\frac{a}{2}-\frac{b}{2}+3c\right) 
\lr{R(J(U)\UU,U_x)\p_x\WW,\WW}
\nonumber
\\
&\quad
+(2a-2a)
\lr{
\sum_k
(\UU, D_k(U)U_x)
J(U)D_k(U)\p_x\WW
, \WW
} 
+C\,D(t)^2
\nonumber
\\
&=
(-a+b)\lr{\,R(\p_x^2\WW,J(U)U_x)U_x,\WW}
\nonumber
\\
&\quad 
+\left(\frac{a}{2}-\frac{b}{2}+3c\right) 
\lr{R(J(U)\UU,U_x)\p_x\WW,\WW}
+C\,D(t)^2
\label{eq:momo}
\end{align}
holds for almost every $t\in (-T,T)$. 
This combined with the integration by parts 
implies only the estimate of the form 
$\frac{1}{2}\frac{d}{dt}\|\WW\|_{L^2}^2
\leqslant C\,\|\WW\|_{H^1}^2$, 
which is still unsatisfactory. 
%
\section{Proof of Theorem~\ref{theorem:uniqueness}}
\label{section:proof}
In this section,  we complete the proof of
Theorem~\ref{theorem:uniqueness}. 
\begin{proof}[Proof of Theorem~\ref{theorem:uniqueness}]
Suppose 
$u$ and $v$ are solutions to 
\eqref{eq:pde}-\eqref{eq:data} 
in Theorem~\ref{theorem:existence}. 
We shall show $u=v$. 
To do this, 
we fix $w$ as an isometric embedding of $(N,J,h)$ into an 
Euclidean space $\RR^d$, 
and  set 
$U=w{\circ}u$, 
$V=w{\circ}v$, 
$Z=U-V$, 
$\UU=dw_u(\nabla_xu_x)$, 
$\VV=dw_v(\nabla_xv_x)$, 
and $\WW=\UU-\VV$. 
The setting is completely the same as that in the previous section. 
Hence, we do not repeat but 
it is recommendable to recall the comments  
stated at the beginning of Section~\ref{section:cl}. 
\par 
For this purpose,  
as $w$ is injective, 
it suffices to show $Z=0$. 
We consider the estimate 
for the following modified energy 
\begin{align}
\widetilde{D}(t)^2
&:=
\|Z(t)\|_{L^2}^2
+\|Z_x(t)\|_{L^2}^2
+\|\TW(t)\|_{L^2}^2, 
\label{eq:menergy2}
\\
\widetilde{\WW}
&:=
\WW+\Lambda,
\label{eq:sag1}
\end{align}
where    
\begin{align}
\Lambda
&:=
-\frac{e_1}{2a}R(Z,U_x)U_x
+
\frac{e_2}{8a} R(J(U)U_x,U_x)J(U)Z, 
\label{eq:sag2}
\end{align}
and $e_1,e_2\in \RR$ are constants which 
will be taken later.
Since $u$ and $v$ satisfy the same initial value, 
$\widetilde{D}(0)=0$ holds. 
We shall show that  
there exists a positive constant $C$ such that
\begin{equation}
\frac{1}{2}
\frac{d}{dt}\widetilde{D}(t)^2
\leqslant C\, 
\widetilde{D}(t)^2
\label{eq:De}
\end{equation}
for almost every $t\in (-T,T)$. 
If it actually holds, then   
\eqref{eq:De} together with $\widetilde{D}(0)=0$ 
shows 
$$
\widetilde{D}(t)^2
=
\widetilde{D}(0)^2
+
\int_0^t\frac{d}{dt}\widetilde{D}(s)^2\,ds
\leqslant 
2C\int_0^t\widetilde{D}(s)^2\,ds
$$
for all $t\in (-T,T)$. 
Since $\widetilde{D}^2(t)$ is a continuous 
real valued function on $[-T,T]$, the inequality shows 
$\widetilde{D}(t)\equiv 0$, which yields $Z(t)=0$ for all $t\in [-T,T]$.
\par
In the same way as we obtain \eqref{eq:momo},  
it is now not difficult to obtain the following estimate 
for almost every $t\in (-T,T)$, 
permitting the loss of derivatives of order one:  
\begin{equation}
\frac{1}{2}\frac{d}{dt}
\left\{
\|Z(t)\|_{L^2}^2
+
\|Z_x(t)\|_{L^2}^2
\right\}
\leqslant 
C\, \widetilde{D}(t)^2.  
\label{eq:notmainineq}
\end{equation}
Hence, 
we hereafter concentrate on deriving 
the equation satisfied by 
$\TW 
$
and the estimate for 
$
\|\TW(t)\|_{L^2}^2
$. 
Observing $\Lambda=\mathcal{O}(|Z|)$, 
we see 
$\TW=\WW+\mathcal{O}(|Z|)$, 
$\p_x\TW=\p_x\WW+\mathcal{O}(|Z|+|Z_x|)$, 
and 
$\p_x^2\TW=\p_x^2\WW+\mathcal{O}(|Z|+|Z_x|+|\WW|)$, 
which will be often used without comments. 
Particularly,  
we will sometimes write $f\equiv g$ 
for $f,g:(-T,T)\times \TT\to \RR^d$ 
if $f-g=\mathcal{O}(|Z|+|Z_x|+|\WW|)$ 
holds almost everywhere. 
Moreover, we will sometimes use the property \eqref{eq:RP} 
without comments. 
\par
We start the computation of  
$\p_t\TW=\p_t\WW+\p_t\Lambda$. 
Using \eqref{eq:R7} and noting 
$\p_tU_x=\p_xU_t\in L^{\infty}(-T;T;C(\TT;\RR^d))$, 
we see     
\begin{align}
\p_t\Lambda
&=
-\frac{e_1}{2a}
R(\p_t\{P(U)Z\},U_x)U_x
+
\frac{e_2}{8a}R(J(U)U_x,U_x)J(U)\p_t\{P(U)Z\}
+\mathcal{O}(|Z|)
\nonumber
\\
&=
-\frac{e_1}{2a}
R(P(U)Z_t,U_x)U_x
+
\frac{e_2}{8a}R(J(U)U_x,U_x)J(U)P(U)Z_t
+\mathcal{O}(|Z|)
\label{eq:aya1}
\end{align} 
holds for almost every $t$.
Recalling \eqref{eq:U_t} and using \eqref{eq:key2} and \eqref{eq:kaehler2}, we see  
\begin{align}
Z_t
&\equiv
a\p_x\{
J(U)\p_x\WW
\}+
a\,A(U)(J(U)\p_x\WW,U_x)
\nonumber
\\
&=
a\,J(U)\p_x^2\WW
+a\p_x(J(U))\p_x\WW+
a\,A(U)(J(U)\p_x\WW,U_x)
\nonumber
\\
&\equiv 
a\,J(U)\p_x^2\WW.
\nonumber
\end{align}
Moreover, since 
\begin{align}
\WW
&=
U_{xx}+A(U)(U_x,U_x)-V_{xx}-A(V)(V_x,V_x)
\nonumber
\\
&=Z_{xx}+A(U)(U_x,Z_x)+
A(U)(Z_x,U_x)+(A(U)-A(V))(V_x,V_x)
\label{eq:130}
\end{align}
and $J(U)A(U)(\cdot,\cdot)=0$, 
we see
$$
J(U)\p_x^2\WW
\equiv 
J(U)\left(
\p_x^2Z_{xx}
+A(U)(U_x,\p_xZ_{xx})
+A(U)(\p_xZ_{xx},U_x)
\right)
=J(U)\p_x^2Z_{xx}.
$$
This implies
\begin{align}
Z_t&\equiv 
a J(U)\p_x^2Z_{xx}.
\label{eq:aya3}
\end{align}
From this and \eqref{eq:aya1}, we have
\begin{align}
\p_t\Lambda
&\equiv
-\frac{e_1}{2}
R(J(U)\p_x^2Z_{xx},U_x)U_x
-
\frac{e_2}{8}R(J(U)U_x,U_x)\p_x^2Z_{xx}.
\label{eq:2momo}
\end{align}
On the other hand,   
recalling \eqref{eq:WW}, 
we can write 
\begin{align}
\p_t\WW
&=
L(U)\WW+a(J(U)-J(V))\p_x^4\VV
\nonumber
\\
&\quad
+\sum_k\mathcal{O}(|\p_x^2\WW|+|\p_x\WW|)\nu_k(U)
+\mathcal{O}(|Z|+|Z_x|+|\WW|)
\nonumber
\\
&\equiv 
L(U)\TW-L(U)\Lambda+a(J(U)-J(V))\p_x^4\VV
\nonumber
\\
&\quad 
+
\sum_k\mathcal{O}(|\p_x^2\WW|+|\p_x\WW|)\nu_k(U).
\label{eq:3momo}
\end{align} 
Here, noting $\Lambda=\mathcal{O}(|Z|)$, 
it is immediate to see 
$$
L(U)\Lambda
\equiv 
a\p_x^2\{J(U)\p_x^2\Lambda\}
+2aA(U)(J(U)\p_x^3\Lambda, U_x).
$$ 
The second term of the right hand side 
is harmless in that 
$$
A(U)(J(U)\p_x^3\Lambda, U_x)
=\sum_k
\mathcal{O}(|\p_x^3\Lambda|)\nu_k(U)
=\sum_k\mathcal{O}(|\p_x\WW|)\nu_k(U).
$$ 
Hence we obtain 
\begin{align}
L(U)\Lambda
&\equiv 
a\p_x^2\{J(U)\p_x^2\Lambda\}
+
\sum_k\mathcal{O}(|\p_x\WW|)\nu_k(U)
\nonumber
\\
&=
-\frac{e_1}{2}
\p_x^2
\left\{
J(U)\p_x^2
\{
R(Z,U_x)U_x
\}
\right\}
\nonumber
\\
&\quad
+
\frac{e_2}{8}
\p_x^2
\{
J(U)\p_x^2
\{
R(J(U)U_x,U_x)J(U)Z
\}
\}
+
\sum_k\mathcal{O}(|\p_x\WW|)\nu_k(U).
\label{eq:4momo}
\end{align}
Combining 
\eqref{eq:2momo}, 
\eqref{eq:3momo}, and 
\eqref{eq:4momo}, 
we get 
\begin{align}
\p_t\TW
&\equiv
L(U)\TW+\frac{e_1}{2}I-\frac{e_2}{8}II
\nonumber
\\
&\quad +a(J(U)-J(V))\p_x^4\VV
+\sum_k\mathcal{O}(|\p_x^2\WW|+|\p_x\WW|)\nu_k(U), 
\label{eq:5momo}
\end{align}
where 
\begin{align}
I&:=
\p_x^2
\left\{
J(U)\p_x^2
\{
R(Z,U_x)U_x
\}
\right\}
-R(J(U)\p_x^2Z_{xx},U_x)U_x,
\label{eq:1comm}
\\
II&:=
\p_x^2
\{
J(U)\p_x^2
\{
R(J(U)U_x,U_x)J(U)Z
\}
\}
+R(J(U)U_x,U_x)\p_x^2Z_{xx}.
\label{eq:2comm}
\end{align}
\begin{remark} 
\label{remark:comm}
The structure of  $I$ and $II$ plays the essential role in our proof. 
Formally, we can write 
$\Lambda=(\Psi_1+\Psi_2)Z_{xx}$, 
where 
$$
\Psi_1=
-\frac{e_1}{2a}R(\p_x^{-2}\cdot,U_x)U_x, 
\quad
\Psi_2=
\frac{e_2}{8a}R(J(U)U_x,U_x)J(U)\p_x^{-2}.
$$
Using the formulation, we can observe that 
$$
-\frac{e_1}{2}I=
\left[
a\,\p_x^2\{J(U)\p_x^2\cdot\}, 
\Psi_1
\right]Z_{xx}, 
\quad
\frac{e_2}{8}I=
\left[
a\,\p_x^2\{J(U)\p_x^2\cdot\}, 
\Psi_2
\right]Z_{xx},
$$
where the bracket $\left[\cdot,\cdot\right]$ 
denotes the commutator. 
\end{remark}
We demonstrate the computation of $I$ and $II$, 
where we repeatedly use the formula
\eqref{eq:R7}, \eqref{eq:RP}. 
First, concerning $I$, a simple computation gives 
\begin{align}
&\p_x\{
R(Z,U_x)U_x
\}
\nonumber
\\
&=
R(\p_x\{P(U)Z\},U_x)U_x
+
R(Z,U_{xx})U_x
+
R(Z,U_x)U_{xx}
-A(R(Z,U_x)U_x,U_x),
\nonumber
\\
&\p_x^2\{
R(Z,U_x)U_x
\}
\nonumber
\\
&
=
R(\p_x^2\{P(U)Z\},U_x)U_x
+
2R(\p_x\{P(U)Z\},U_{xx})U_x
\nonumber
\\
&\quad
+
2R(\p_x\{P(U)Z\},U_x)U_{xx}
-2A(U)(R(\p_x\{P(U)Z\},U_x)U_x,U_x)
+\mathcal{O}(|Z|).
\nonumber
\end{align}
Using 
$J(U)A(U)(\cdot,\cdot)=0$, 
we obtain
\begin{align}
&J(U)\p_x^2\{
R(Z,U_x)U_x
\}
\nonumber
\\
&
=
J(U)R(\p_x^2\{P(U)Z\},U_x)U_x
+
2J(U)R(\p_x\{P(U)Z\},U_{xx})U_x
\nonumber
\\
&\quad
+
2J(U)R(\p_x\{P(U)Z\},U_x)U_{xx}
+\mathcal{O}(|Z|).
\nonumber
\end{align}
From this and $J(U)A(U)(\cdot,\cdot)=0$,  
we continue the computation and obtain 
\begin{align}
&\p_x^2\{
J(U)\p_x^2\{
R(Z,U_x)U_x
\}
\}
\nonumber
\\
&=
J(U)R(\p_x^4\{P(U)Z\},U_x)U_x
+
2\p_x(J(U))R(\p_x^3\{P(U)Z\},U_x)U_x
\nonumber
\\
&\quad
+
4J(U)R(\p_x^3\{P(U)Z\},U_{xx})U_x
+
4J(U)R(\p_x^3\{P(U)Z\},U_x)U_{xx}
\nonumber
\\
&\quad
+\mathcal{O}(|Z|+|Z_x|+|Z_{xx}|).
\label{eq:230}
\end{align}
Here, 
we see
\begin{align}
\p_x^3\{P(U)Z\}
&=P(U)\p_xZ_{xx}+\mathcal{O}(|Z|+|Z_x|+|Z_{xx}|),
\nonumber
\\
\p_x^4\{P(U)Z\}
&=P(U)\p_x^2Z_{xx}+4\p_x(P(U))\p_xZ_{xx}
+\mathcal{O}(|Z|+|Z_x|+|Z_{xx}|)
\nonumber
\\
&=
P(U)\p_x^2Z_{xx}-4A(U)(\p_xZ_{xx},U_x)
+\mathcal{O}(|Z|+|Z_x|+|Z_{xx}|).
\nonumber
\end{align}
Thus we have 
$P(U)\p_x^3\{P(U)Z\}\equiv P(U)\p_xZ_{xx}$ and 
$P(U)\p_x^4\{P(U)Z\}\equiv P(U)\p_x^2Z_{xx}$. 
In addition, from \eqref{eq:130} and $P(U)A(U)(\cdot,\cdot)=0$, 
we see 
$P(U)\p_x^3\{P(U)Z\}\equiv P(U)\p_x\WW$ and 
$P(U)\p_x^4\{P(U)Z\}\equiv P(U)\p_x^2\WW$. 
Substituting them into \eqref{eq:230}, we deduce
\begin{align}
I&\equiv
J(U)R(\p_x^2\WW,U_x)U_x
-R(J(U)\p_x^2\WW,U_x)U_x
+
2\p_x(J(U))R(\p_x\WW,U_x)U_x
\nonumber
\\
&\quad
+
4J(U)R(\p_x\WW,U_{xx})U_x
+
4J(U)R(\p_x\WW,U_x)U_{xx}.
\nonumber
\end{align} 
The computation below is now standard.
In the same way as above, using \eqref{eq:R1}, \eqref{eq:R3}, 
\eqref{eq:R4}, \eqref{eq:R6},  
we deduce  
\begin{align}
&J(U)R(\p_x^2\WW,U_x)U_x-R(J(U)\p_x^2\WW,U_x)U_x
\nonumber
\\
&=R(\p_x^2\WW,U_x)J(U)U_x+R(\p_x^2\WW,J(U)U_x)U_x
\nonumber
\\
&=-R(U_x,J(U)U_x)\p_x^2\WW
-R(J(U)U_x,\p_x^2\WW)U_x
+R(\p_x^2\WW,J(U)U_x)U_x
\nonumber
\\
&=
2R(\p_x^2\WW,J(U)U_x)U_x
+R(J(U)U_x,U_x)\p_x^2\WW
\nonumber
\\
&\equiv 
2R(\p_x^2\WW,J(U)U_x)U_x
+\p_x\{R(J(U)U_x,U_x)\p_x\WW\}
\nonumber
\\
&\quad 
-2R(J(U)\UU,U_x)\p_x\WW
+\sum_k\mathcal{O}(|\p_x\WW|)\nu_k(U).
\label{eq:430}
\end{align} 
Using \eqref{eq:kaehler2} and $R(\p_x\WW,U_x)U_x\perp \nu_k(U)$, 
we see 
\begin{align}
2\p_x(J(U))R(\p_x\WW,U_x)U_x
&=-2A(U)(J(U)R(\p_x\WW,U_x)U_x
\nonumber
\\
&
=\sum_k\mathcal{O}(|\p_x\WW|)\nu_k(U).
\label{eq:530}
\end{align}
Using \eqref{eq:ppp1}-\eqref{eq:ppp2}, we see
\begin{align}
&4J(U)R(\p_x\WW,U_{xx})U_x
+
4J(U)R(\p_x\WW,U_x)U_{xx}
\nonumber
\\
&=
4R(\p_x\WW,\UU)J(U)U_x
+
4R(\p_x\WW,U_x)J(U)\UU
\nonumber
\\
&=
4R(J(U)\UU,U_x)\p_x\WW
+2(B_2(U)+B_3(U))\p_x\WW. 
\label{eq:630}
\end{align}
Combining \eqref{eq:430}-\eqref{eq:630}, we obtain 
\begin{align}
I&\equiv 
2R(\p_x^2\WW,J(U)U_x)U_x
+\p_x\{R(J(U)U_x,U_x)\p_x\WW\}
+2R(J(U)\UU,U_x)\p_x\WW
\nonumber
\\
&\quad 
+2(B_2(U)+B_3(U))\p_x\WW
+\sum_k\mathcal{O}(|\p_x\WW|)\nu_k(U).
\label{eq:I_r}
\end{align}
Second, we compute $II$. 
The way of the computation of $II$ is now almost 
same as that for $I$.  
The result of the computation is as follows: 
\begin{align}
&\p_x\{
R(J(U)U_x,U_x)J(U)Z
\}
=\p_x\{
J(U)R(J(U)U_x,U_x)Z
\}
\nonumber
\\
&=
J(U)R(J(U)U_x,U_x)\p_x\{P(U)Z\}
+
\p_x(J(U))R(J(U)U_x,U_x)Z
\nonumber
\\
&\quad
+J(U)R(\p_x\{J(U)U_x\},U_x)Z
+J(U)R(J(U)U_x,U_{xx})Z.
\nonumber
\end{align}
\begin{align}
&\p_x^2\{
R(J(U)U_x,U_x)J(U)Z
\}
\nonumber
\\
&=
J(U)R(J(U)U_x,U_x)\p_x^2\{P(U)Z\}
+
2\p_x(J(U))R(J(U)U_x,U_x)\p_x\{P(U)Z\}
\nonumber
\\
&\quad
+2J(U)R(\p_x\{J(U)U_x\},U_x)\p_x\{P(U)Z\}
\nonumber
\\
&\quad
+2J(U)R(J(U)U_x,U_{xx})\p_x\{P(U)Z\}
+\mathcal{O}(|Z|).
\nonumber
\end{align}
\begin{align}
&J(U)\p_x^2\{
R(J(U)U_x,U_x)J(U)Z
\}
\nonumber
\\
&=
-R(J(U)U_x,U_x)\p_x^2\{P(U)Z\}
-2R(\p_x\{J(U)U_x\},U_x)\p_x\{P(U)Z\}
\nonumber
\\
&\quad
-2R(J(U)U_x,U_{xx})\p_x\{P(U)Z\}
+\mathcal{O}(|Z|).
\nonumber
\end{align}
\begin{align}
&\p_x^2\{
J(U)\p_x^2\{
R(J(U)U_x,U_x)J(U)Z
\}
\}
\nonumber
\\
&=
-R(J(U)U_x,U_x)\p_x^4\{P(U)Z\}
-4R(\p_x\{J(U)U_x\},U_x)\p_x^3\{P(U)Z\}
\nonumber
\\
&\quad
-4R(J(U)U_x,U_{xx})\p_x^3\{P(U)Z\}
\nonumber
\\&\quad
+\sum_k\mathcal{O}(|\p_x\WW|)\nu_k(U)
+\mathcal{O}(|Z|+|Z_x|+|Z_{xx}|)
\nonumber
\\
&\equiv 
-R(J(U)U_x,U_x)\p_x^2\WW
-4R(J(U)\UU,U_x)\p_x\WW
\nonumber
\\
&\quad
-4R(J(U)U_x,\UU)\p_x\WW
+\sum_k\mathcal{O}(|\p_x\WW|)\nu_k(U)
\nonumber
\\
&=
-R(J(U)U_x,U_x)\p_x^2\WW
-8R(J(U)\UU,U_x)\p_x\WW
+\sum_k\mathcal{O}(|\p_x\WW|)\nu_k(U).
\label{eq:830}
\end{align}
From \eqref{eq:830} and 
$R(J(U)U_x,U_x)\p_x^2Z_{xx}\equiv R(J(U)U_x,U_x)\p_x^2\WW$, 
we obtain 
\begin{align}
II&\equiv 
-8R(J(U)\UU,U_x)\p_x\WW
+\sum_k\mathcal{O}(|\p_x\WW|)\nu_k(U).
\label{eq:II_r}
\end{align}
Substituting \eqref{eq:I_r} and \eqref{eq:II_r} into 
\eqref{eq:5momo}, and using 
$\p_x^2\TW\equiv \p_x^2\WW$ and 
$\p_x\TW\equiv \p_x\WW$, 
we get 
\begin{align}
\p_t\TW
&\equiv 
L(U)\TW
+e_1R(\p_x^2\TW,J(U)U_x)U_x
+\frac{e_1}{2}\p_x\{
R(J(U)U_x,U_x)\p_x\TW
\}
\nonumber
\\&\quad 
+(e_1+e_2)R(J(U)\UU,U_x)\p_x\TW
+e_1(B_2(U)+B_3(U))\p_x\TW
\nonumber
\\
&\quad
+a(J(U)-J(V))\p_x^4\VV
+\sum_k\mathcal{O}(|\p_x^2\TW|+|\p_x\TW|)\nu_k(U).
\label{eq:momomo}
\end{align}
We now arrived at the estimate for
$\lr{\p_t\TW,\TW}$.  
Since $\TW=\WW+\mathcal{O}(|Z|)$, 
$\lr{L(U)\TW,\TW}$ has the same type of the estimate 
as that of $\lr{L(U)\WW,\WW}$ which is bounded by 
the right hand side of \eqref{eq:momo}. More precisely, we deduce  
\begin{align}
\lr{L(U)\TW,\TW}
&\leqslant 
(-a+b)\lr{
R(\p_x^2\TW,J(U)U_x)U_x,\TW
}
\nonumber
\\
&\quad
+\left(
\frac{a}{2}-\frac{b}{2}+3c
\right)\lr{
R(J(U)\UU,U_x)\p_x\TW,\TW
}
+C\,\widetilde{D}^2(t)
\label{eq:2momomo}
\end{align}
for almost every $t$.
The other terms (except the second and the fourth terms) 
of the right hand side of \eqref{eq:momomo} can be handled 
in the same way 
as we obtain 
\eqref{eq:10mm}, 
\eqref{eq:11mm},
\eqref{eq:423add},
and
\eqref{eq:18e}. 
Having them in mind, we take $e_1$ and $e_2$ so that 
$$
e_1=a-b,
\quad
e_2=-e_1-\frac{a}{2}+\frac{b}{2}-3c
=
-\frac{3a}{2}
+\frac{3b}{2}-3c.
$$
Then, we derive the following estimate 
\begin{align}
&\frac{1}{2}\frac{d}{dt}
\|\TW\|_{L^2}^2
=\lr{\p_t\TW,\TW}
\nonumber
\\
&\leqslant
(e_1-a+b)\lr{
 R(\p_x^2\TW,J(U)U_x)U_x,\TW
 }
 \nonumber
 \\
 &\quad
 +\left(
 e_1+e_2+
 \frac{a}{2}-\frac{b}{2}+3c
 \right)\lr{
 R(J(U)\UU,U_x)\p_x\TW,\TW
 }
 +C\,\widetilde{D}^2(t)
 \nonumber
 \\
 &=
 C\,\widetilde{D}^2(t)
\label{eq:mmk}
\end{align}
for almost every $t\in (-T,T)$. 
This combined with \eqref{eq:notmainineq} 
shows the desired estimate \eqref{eq:De}, 
and thus we complete the proof. 
\end{proof}
\section*{Acknowledgments}
The author has been supported by 
JSPS Grant-in-Aid for Scientific Research (C) 
Grant Number JP20K03703.


\begin{thebibliography}{0}
%

\bibitem{Akhunov}
T.~Akhunov,
A sharp condition for the well-posedness of the linear 
KdV-type equation,  
{\it Proc.~Amer. Math. Soc.}
\textbf{142} (2014) 4207--4220.

\bibitem{CSU}
N.-H.~Chang, J.~Shatah, and K.~Uhlenbeck,
Schr\"odinger maps, 
{\it Comm.\ Pure Appl.\ Math.} \textbf{53} (2000) 590--602.

\bibitem{chihara} 
H.~Chihara, 
Schr\"odinger flow into almost Hermitian manifolds,
{\it Bull.\ Lond.\ Math.\ Soc.} \textbf{45} (2013) 37--51. 

\bibitem{chihara2} 
H.~Chihara, 
Fourth-order dispersive systems on the one-dimensional torus,
{\it J.\ Pseudo-Differ.\ Oper.\ Appl.} \textbf{6} (2015) 237--263. 

\bibitem{CO}
H.~Chihara and E.~Onodera, 
A third order dispersive flow for closed curves into 
almost Hermitian manifolds,
{\it J. Funct.\ Anal.} \textbf{257} (2009) 388--404.

\bibitem{CO2}
H.~Chihara and E.~Onodera, 
A fourth-order dispersive flow into K\"ahler manifolds,
{\it Z.\ Anal.\ Anwend.} \textbf{34} (2015) 221--249. 

\bibitem{DW2018}
Q.~Ding and Y.~D.~Wang, 
Vortex filament on symmetric Lie algebras and 
generalized bi-Schr\"odinger flows,
{\it Math.\ Z.} \textbf{290} (2018) 167--193.


\bibitem{DW1998}
W.~Y.~Ding and Y.~D.~Wang, 
Schr\"odinger flow of maps into symplectic manifolds,
{\it Sci.\ China Ser.\ A} \textbf{41} (1998) 746--755. 


\bibitem{DW0}
W.~Y.~Ding and Y.~D.~Wang, 
Local Schr\"odinger flow into K\"ahler manifolds,
{\it Sci.\ China Ser.\ A} \textbf{44} (2001) 1446--1464.

\bibitem{ES}
J.~Eells and J.~H.~Sampson, 
Harmonic mappings of Riemannian manifolds, 
{\it Amer. J. Math.} \textbf{86} (1964) 109?-160.


\bibitem{fukumoto}
Y.~Fukumoto, 
Three-dimensional motion of a vortex filament and its relation 
to the localized induction hierarchy,
{\it Eur.\ Phys.\ J. B} \textbf{29} (2002) 167--171.

\bibitem{FM}
Y.~Fukumoto and H.~K.~Moffatt, 
Motion and expansion of a viscous vortex ring. Part I. 
A higher-order asymptotic formula for the velocity,
{\it J.\ Fluid.\ Mech.}  \textbf{417}  (2000) 1--45.

\bibitem{GZS} 
B.~Guo, M.~Zeng, and F.~Su, 
Periodic weak solutions for a classical 
one-dimensional isotropic biquadratic Heisenberg spin chain,
{\it J. Math.\ Anal.\ Appl.}  \textbf{330} (2007) 729--739.

\bibitem{HLSS}
S.~Herr, T.~Lamm, T.~Schmid, and R.~Schnaubelt, 
Biharmonic wave maps: local wellposedness in high regularity, 
{\it Nonlinearity} \textbf{33} (2020) 2270--2305.

\bibitem{Hsiung}
C.~C.~Hsiung,
Almost complex and complex structures,
{\it Series in Pure Mathematics, 20. 
World Scientific Publishing Co., Singapore}, 
1995. xvi+310 pp

\bibitem{KLPST}
C.~Kenig, T.~Lamm, D.~Pollack, G.~Staffilani, and T.~Toro, 
The Cauchy problem for Schr\"odinger flows,
{\it Discrete Contin.\ Dyn.\ Syst.}  \textbf{27} (2010) 389--439.

\bibitem{koiso}
N.~Koiso, 
The vortex filament equation and 
a semilinear Schr\"odinger equation in a Hermitian symmetric space,
{\it Osaka J. Math.}  \textbf{34} (1997) 199--214.

\bibitem{LPD}
M.~Lakshmanan, K.~Porsezian, and M.~Daniel, 
Effect of discreteness on the continuum limit of the Heisenberg
spin chain,
{\it Phys.\ Lett.\ A.} \textbf{133} (1988) 483--488.

\bibitem{Lamm} 
T.~Lamm, 
Biharmonic Maps. PhD thesis, Albert-Ludwigs-Universit\"at 
Freiburg im Breisgau, 2005.

\bibitem{McGahagan}
H.~McGahagan,
An approximation scheme for Schr\"odinger maps,
{\it Comm.\ Partial Differential Equations}
\textbf{32} (2007) 375--400.

\bibitem{Mizohata}
S.~Mizohata, 
On the Cauchy Problem,
{\it Academic Press} (1985)

\bibitem{Mizuhara}
R.~Mizuhara, 
The initial value problem for third and fourth order dispersive 
equations in one space dimension,
{\it Funkcial Ekvac}
\textbf{49} (2006) 1--38. 

\bibitem{NSVZ}
A.~Nahmod, J.~Shatah, L.~Vega, and C.~Zeng, 
Schr\"odinger maps and their associated frame systems,
{\it Int.\ Math.\ Res.\ Not.\ IMRN} 2007, no. 21, Art. ID rnm088, 29 pp.

\bibitem{onodera0} 
E.~Onodera, 
Generalized Hasimoto transform of one-dimensional 
dispersive flows into compact Riemann surfaces,
{\it SIGMA Symmetry Integrability Geom.\ Methods Appl.} 
\textbf{4} (2008) article No. 044, 10 pages.

\bibitem{onodera1}
E.~Onodera, 
A third-order dispersive flow for closed curves 
into K\"ahler manifolds, 
{\it J. Geom.\ Anal.}  \textbf{18} (2008) 889--918.

\bibitem{onoderag}
E.~Onodera, 
A remark on the global existence of a third order dispersive 
flow into locally Hermitian symmetric spaces,
{\it Comm. Partial Differential Equations}  \textbf{35} (2010) 1130--1144.

\bibitem{onodera2}
E.~Onodera, 
The initial value problem for a fourth-order 
dispersive closed curve flow on the two-sphere,
{\it Proc.\ Roy.\ Soc.\ Edinburgh Sect.\ A.}
\textbf{147} (2017) 1243--1277. 

\bibitem{onodera3}
E.~Onodera, 
A fourth-order dispersive flow equation for 
closed curves on compact Riemann surfaces,
{\it J.\ Geom.\ Anal.} 
\textbf{27} (2017) 3339--3403. 

\bibitem{onodera4}
E.~Onodera, 
Local existence of a fourth-order dispersive 
curve flow on locally Hermitian symmetric spaces and its application,
{\it Differential\ Geom.\ Appl.} 
\textbf{67} (2019) 101560, 26pp. 

\bibitem{PWW}
P.~Y.~H.~Pang, H.~.Y.~Wang, and Y.~D.~Wang, 
Schr\"odinger flow on Hermitian locally symmetric spaces,
{\it Comm.\ Anal.\ Geom.} 
\textbf{10} (2002)  653--681. 

\bibitem{Robinson}
J.~C.~Robinson, 
Infinite-Dimensional Dynamical Systems: 
An Introduction to Dissipative Parabolic PDEs 
and the Theory of Global Attractors,
{\it Cambridge Texts in Applied Mathematics, 
Cambridge University Press, Cambridge} 
(2001)

\bibitem{RRS}
I.~Rodnianski, Y.~A.~Rubinstein, and G.~Staffilani, 
On the global well-posedness of the one-dimensional 
Schr\"odinger map flow,
{\it Analysis and PDE} \textbf{2} (2009) 187--209.

\bibitem{segata}
J.~Segata, 
Refined energy inequality with application to well-posedness 
for the fourth order nonlinear Schr\"odinger type equation 
on torus. 
{\it J. Differential Equations} \textbf{252} (2012), 5994--6011. 

\bibitem{ShSt} 
J.~Shatah and M.~Struwe, 
Geometric wave equations, 
Courant Lecture Notes 2 (1998), Courant Institute, New York; 
second edition, AMS (2000)

\bibitem{Song}
C.~Song, 
The KdV curve and Schr\"odinger-Airy curve,
{\it Proc.\ Amer.\ Math.\ Soc.} \textbf{140} (2012) 635--644.

\bibitem{SW}
C.~Song and Y.~D.~Wang, 
Uniqueness of Schr\"odinger flow on manifolds,
{\it Comm.\ Anal.\ Geom.} \textbf{26} (2018) 217--235.


\bibitem{SSB}
P.L.~Sulem, C.~Sulem, and C.Bardos, 
On the continuous limit for a system of classical spins,
{\it Comm.\ Math.\ Phys.}  \textbf{107} (1986) 431--454.


\bibitem{SW2010}
X.~W.~Sun and Y.~D.~Wang, 
Some geometric flows on K\"ahler manifolds,
{\it J.\ Partial Differ.\ Equ.}  \textbf{23} (2010) 203--208.

 
\bibitem{SW2011}
X.~W.~Sun and Y.~D.~Wang, 
KdV geometric flows on K\"ahler manifolds,
{\it Internat.\ J.\ Math.}  \textbf{22} (2011) 1439--1500.


\bibitem{SW2013}
X.~W.~Sun and Y.~D.~Wang, 
Geometric Schr\"odinger-Airy Flows on K\"ahler Manifolds,
{\it Acta Math.\ Sin.}  \textbf{29} (2013) 209--240.

\bibitem{Tarama}
S.~Tarama, 
$L^2$-well-posed Cauchy problem for fourth order dispersive 
equations on the line, 
{\it Electron. J. Differential Equations} 
\textbf{168} (2011) 1--11. 
 
\bibitem{Temam}
R.~Temam,
Infinite-dimensional dynamical systems in mechanics 
and physics, second ed., 
{\it Applied Mathematical Sciences, 
vol. 68, Springer-Verlag, New York}
 (1997) 
 
 \bibitem{TU}
 C.~L.~Terng and K.~Uhlenbeck,
 Schr\"odinger flow on Grassmannians, 
 {\it AMS/IP Stud.\ Adv.\ Math.} \textbf{36} (2006) 235--256.

\end{thebibliography}
\end{document}